\documentclass[oneside]{amsart} %

\usepackage[percent]{overpic}

\usepackage{paralist}
\usepackage{graphics} %
\usepackage{epsfig} %
\usepackage{xspace}
\usepackage{marginnote} %

\usepackage{array}

\usepackage{appendix}
\usepackage[english]{babel} 
\usepackage[T1]{fontenc} 
\usepackage[utf8]{inputenc} 
\usepackage[margin=1.1in]{geometry}
\usepackage{graphicx}
\usepackage{tabto}
\usepackage{amsmath}
\usepackage{calligra}
\usepackage{amsthm}
\usepackage{textcomp}
\usepackage{upgreek}
\usepackage{placeins}
\usepackage{pgfplotstable}
\usepackage{bbm}
\usepackage[colorinlistoftodos,prependcaption]{todonotes}
\usepackage{cancel}
\usepackage{mathrsfs}
\usepackage{mathtools}
\usepackage{version}
\usepackage{arydshln}
\usepackage[math]{cellspace}

\usepackage{multicol}

\usepackage{bbding}
\usepackage{hyperref}
\hypersetup{
    colorlinks=true,
    linkcolor=blue,
    filecolor=magenta,      
    urlcolor=cyan,
    pdftitle={Overleaf Example},
    pdfpagemode=FullScreen,
    }

\newcommand{\rvline}{\hspace*{-\arraycolsep}\vline\hspace*{-\arraycolsep}}
\newtheorem{theorem}{Theorem}
\newtheorem{remark}{Remark}
\newtheorem{problem}{Problem}
\newtheorem{lemma}{Lemma}
\newtheorem{proposition}{Proposition}
\usepackage{booktabs}
\usepackage{pgfplots}
\pgfplotsset{compat=1.18}
\usepackage{xcolor}
\usepackage{colortbl}
\usepackage{tikz}
\usepackage{amssymb}%
\usepackage{pifont}%

\usepackage{bm}
\usepackage{subcaption}

\usepackage{tikz}
\usepackage{pgfplots}
\pgfplotsset{compat=1.18}

\usepackage{amssymb}

\usepackage{hyperref}
\makeatletter
\let\amsart@setauthors\@setauthors
\def\@setauthors{%
  \amsart@setauthors
  \ifx\@empty\addresses\else
    \@setaddresses
    \global\let\addresses\@empty
  \fi
}
\makeatother

\begin{document}

\title[]{Augmented Lagrangian preconditioners for fictitious domain formulations of elliptic interface problems}

\author{Michele Benzi}
\address{Scuola Normale Superiore, Piazza dei Cavalieri, 7, 56126 Pisa, Italy}
\email{michele.benzi@sns.it}

\author{Marco Feder}
\address{Department of Mathematics, University of Pisa, Largo B. Pontecorvo, 5, Pisa, 56127, Italy}
\email{marco.feder@dm.unipi.it}

\author{Luca Heltai}
\address{Department of Mathematics, University of Pisa, Largo B. Pontecorvo, 5, Pisa, 56127, Italy}
\email{luca.heltai@unipi.it}

\author{Federica Mugnaioni}
\address{Scuola Normale Superiore, Piazza dei Cavalieri, 7, 56126 Pisa, Italy}
\email{federica.mugnaioni@sns.it}

\maketitle

\begin{abstract}
  We present a novel augmented Lagrangian (AL) preconditioner for the solution of linear systems arising from finite element discretizations of elliptic interface problems with jump coefficients. The method is based on the Fictitious Domain with Distributed Lagrange Multipliers formulation and it is designed to improve the convergence
  of the Flexible Generalized Minimal Residual (FGMRES) method in the presence of large coefficient jumps. To reduce the computational cost, we also introduce a
  cheaper block-triangular variant of the preconditioner. We prove eigenvalue clustering for the ideal AL preconditioner and study the limiting behavior of the spectrum for the modified variant in terms of parameters and the size of the jumps. Numerical experiments on different immersed geometries confirm mesh-independent iteration counts and robustness over large coefficient jumps, with substantial reductions in wall-clock time for the modified approach.

  \vspace{0.4cm}
  \noindent
  \emph{Keywords:} Preconditioning; Augmented Lagrangian method; Iterative solvers; Fictitious domain method; Elliptic interface problems; Non-matching meshes; Finite element method.

\end{abstract}

\section{Introduction}
In this work, we consider the efficient numerical solution of elliptic interface problems with discontinuous coefficients across an internal interface.
Problems of this type arise in many scientific and engineering applications, including biosciences and fluid--structure interaction (FSI), where fluid and solid regions exhibit distinct physical properties across the interface.

A classical strategy to handle interface problems is to use \textit{fitted approaches}, which involve
generating a mesh that conforms to the interface between different materials or regions and provide
accurate solutions~\cite{barrett-interface,hou-interface,huynh-interface,mu-interface,chen-interface}. In FSI, a prominent fitted strategy is the Arbitrary Lagrangian--Eulerian (ALE) approach~\cite{donea1982arbitrary,hirt1974arbitrary,improved-ale}, where compatibility between fluid and solid kinematics can be enforced by construction. In settings with large interface motion or deformation, however, maintaining mesh quality can require mesh-motion strategies and, in some cases, remeshing, which may increase computational cost, particularly in three-dimensional simulations.
Motivated by these considerations, a range of \textit{unfitted methods} have been developed over
the past decades, where the computational mesh does not need to conform to the interface and the interface is permitted to cut through elements. Representative examples include the Immersed Boundary Method by Peskin~\cite{Peskin_2002}, where the
interface position is tracked by regularized Dirac delta distributions, and
the level set method~\cite{SUSSMAN1994146,chang1996level}, where the interface corresponds to the
zero level set of a certain function. Other techniques, such as Nitsche-XFEM~\cite{alauzet2016nitsche}, Cut-FEM~\cite{Burman_Hansbo_Larson_Zahedi_2025,CutFEM,cutStokes,HANSBO201490}, and Finite Cell methods~\cite{finitecell1,finitecell2,finitecell3} typically enforce interface/boundary conditions weakly via Nitsche-type couplings and/or stabilization terms. An alternative strategy is offered by the Fictitious Domain approach, spanning applications ranging from particulate flow simulations~\cite{glowinski2001fictitious,glowinski1997lagrange}
to more general cases considered in later works~\cite{fatboundary,yu2005dlm}.
Building on the idea of fictitiously extending one domain into the other, the method introduced in~\cite{BoffiCavalliniGastaldi2015} proposes a new Fictitious Domain formulation with Distributed Lagrange Multipliers (FD-DLM) to address elliptic interface and FSI problems. The method employs independent meshes (one for the extended domain, and one for the immersed domain) that are generated only once and remain fixed throughout the computation. To enforce that the solutions coincide in both domains, a suitable coupling term is introduced, implemented through a distributed Lagrange multiplier.

Elliptic interface and FSI problems
share the common feature of involving a domain split into two regions separated by an interface. In the former case, this separation is due to the presence of discontinuous coefficients, whereas in the latter it reflects the presence of two different equations, corresponding to constitutive models for the fluid and the solid.

Focusing on the elliptic interface case, the analysis of a fictitious domain-like approach using continuous finite elements was first introduced in~\cite{AURICCHIO201536}, and more
recently extended to discontinuous elements in~\cite{BoffiGastaldi2024}. Independently of
the chosen discretization, the method leads to a saddle point problem with a three-by-three block structure of this form:
\begin{equation}\label{eqn:intro}
  \begin{bmatrix}
    \mathsf{A} & 0             & \mathsf{C^T}    \\
    \mathsf{0} & \mathsf{A_2}  & \mathsf{-C_2^T} \\
    \mathsf{C} & \mathsf{-C_2} & 0
  \end{bmatrix}
  \begin{bmatrix}
    \mathsf{u}   \\
    \mathsf{u}_2 \\
    \mathsf{\lambda}
  \end{bmatrix}=
  \begin{bmatrix}
    \mathsf{ f} \\
    \mathsf{g}  \\
    \mathsf{0}
  \end{bmatrix},
\end{equation}
where $\mathsf{A} \in \mathbb{R}^{n \times n}$ is symmetric positive definite (SPD), $\mathsf{A_2} \in \mathbb{R}^{m \times m}$ is symmetric positive \emph{semidefinite} (SPSD), $\mathsf{C} \in \mathbb{R}^{\ell \times n}$, and $\mathsf{C_2} \in \mathbb{R}^{\ell \times m}$.

This three-by-three block structure also appears, although with different meaning in terms of operators and unknowns, in a variety of application problems. In the context of PDE-constrained optimization and optimal control problems, suitable preconditioning strategies have been proposed in~\cite{MBconstrained,schoberl,SimonciniROM,STOLL2013498,PearsonRegularization}.

The system matrix in~\eqref{eqn:intro} can be regarded as a two-by-two block matrix:
\begin{equation}\label{eqn:intro_rid}
  \mathcal{A}=\begin{bmatrix}
    \widetilde{\mathsf{A}} & \mathsf{B^T} \\
    \mathsf{B}             & 0
  \end{bmatrix}, \quad \text{where} \quad \widetilde{\mathsf{A}}= \begin{bmatrix}
    {\mathsf{A}} & 0            \\
    0            & \mathsf{A_2}
  \end{bmatrix}\quad \text{and} \quad \mathsf{B} =\begin{bmatrix}
    \mathsf{C} & -\mathsf{C_2}
  \end{bmatrix},
\end{equation}
indicating that the problem is, in principle, amenable to a \emph{standard} saddle point problem. For large discretizations, particularly in three dimensions, memory and factorization costs often make fully direct solution strategies impractical, motivating iterative Krylov methods with robust preconditioning. In addition, the resulting linear systems can become challenging for Krylov iterative methods due to coefficient contrast, the semidefinite sub-block associated with the immersed domain operator, and the multi-mesh coupling. These features can lead to severe ill-conditioning and, consequently, to a deterioration of the convergence rate. Therefore, designing a suitable and robust preconditioner is crucial to accelerate the convergence of iterative methods.

If $\widetilde{\mathsf{A}}$ were invertible, classical block preconditioners such as those surveyed in~\cite{Benzi2005} could be considered. However, these approaches rely on the Schur complement $\widetilde{\mathsf{S}}=\mathsf{B}\widetilde{\mathsf{A}}^{-1}\mathsf{B^T}$, and thus cannot be applied directly to~\eqref{eqn:intro_rid}, where $\widetilde{\mathsf{A}}$ is singular due to the semidefiniteness of $\mathsf{A_2}$. Moreover, strategies requiring an approximation of the Schur complement $\mathsf{S=CA^{-1}C^T}$ (such as the ones in~\cite{SimonciniROM}) are especially challenging for unfitted methods; difficulties arise because the
matrix $\mathsf{C}$ in~\eqref{eqn:intro} involves the product of basis functions defined on two arbitrarily overlapping grids~\cite{boffi2023comparison,BOFFI2022115650,KrauseZulian}, making it unclear to which matrix $\mathsf{S}$ should be spectrally equivalent.

Preconditioners for fictitious domain formulations of elliptic interface problems
have already been investigated, e.g. in~\cite{WangDLMFD_prec}, where an augmented Lagrangian (AL) Uzawa
iterative method is presented. Reported results indicate robustness with respect to coefficient contrast, but iteration counts and parameter sensitivity remain non-trivial in some regimes. Later, in
the context of FSI simulations, a block preconditioner was studied in~\cite{BOFFI2024406} using sparse
direct solvers for the inversion of diagonal blocks. Recently, an iterative version based on multigrid methodologies has been tested
in~\cite{alshehri2025multigridpreconditioningfddlmmethod} for the elliptic interface case.
For the fictitious domain approach with boundary-supported Lagrange multipliers applied to Poisson and Stokes problems, a novel augmented Lagrangian preconditioner was recently proposed and analyzed by the authors in~\cite{BENZI2026118522}. Originally developed for finite element discretizations of the Oseen problem arising from Picard linearizations of the
steady Navier-Stokes equations~\cite{ALprec,Farrell2019}, augmented Lagrangian preconditioners effectively bypass the need for a good approximation of the dense Schur complement matrix. In the context of elliptic interface problems, an additional important benefit arises: the augmentation removes the singularity from the $(2,2)$-block in~\eqref{eqn:intro}. Motivated by these considerations, in this work we extend the results and preconditioning techniques presented in~\cite{BENZI2026118522} to handle
linear systems of equations arising from the fictitious domain formulation with distributed Lagrange multipliers for elliptic interface problems. The resulting \emph{ideal} AL preconditioner admits a spectral analysis showing favorable eigenvalue clustering and mesh-independent bounds; numerically we observe robust iteration counts over wide coefficient contrasts.

The term \emph{ideal} refers to a theoretical formulation of the preconditioner where the action of inverses is assumed to be computed exactly. This is, however, impractical, due to its high computational cost. The main cost arises from the solution of linear systems with the augmented term, making efficient inexact solvers essential. A tailored geometric multigrid cycle used as an approximate solver for
the velocity subproblem associated with the $(1,1)$-block of the preconditioner in the AL formulation of the Oseen problem was
initially proposed in~\cite{ALprec}, and later extended to three dimensions in~\cite{Farrell2019}, inspired by the
multigrid framework presented in~\cite{schoeberl_1999}. While this strategy yields optimal results in terms of iteration counts as well as robustness with respect
to a wide range of Reynolds numbers, its implementation can be challenging for general discretizations and geometries.
For this reason, a so-called \emph{modified} AL preconditioner was introduced in~\cite{modALprec} for the Oseen
problem, whose block-triangular structure makes it easier to implement. In particular, off-the-shelf
algebraic multigrid solvers and preconditioners for scalar elliptic PDEs can be employed to solve the subsystems arising
during the application of the preconditioner, providing an efficient approach for inverting the diagonal blocks. A spectral
analysis of this modified AL-based preconditioner was presented in~\cite{spectral_analysis_modified}, giving a recipe for the choice of the augmentation parameter $\gamma$. Since the augmented operator in our formulation couples
two overlapping grids, constructing a monolithic geometric multigrid solver is challenging; we therefore adopt a modified AL strategy that admits efficient sub-solves using standard Algebraic MultiGrid (AMG) components.

Summarizing, the main contributions of our work are the following. First, we develop a novel augmented Lagrangian preconditioner for the fictitious domain (FD-DLM) formulation of elliptic interface problems, which demonstrates robustness and low iteration counts across a wide range of problem regimes. Second, we introduce a computationally efficient modified variant of this preconditioner that leverages standard algebraic multigrid solvers for the diagonal blocks, significantly reducing implementation complexity and overall time-to-solution. Third, we provide a rigorous spectral analysis, demonstrating favorable eigenvalue clustering and robustness with respect to large coefficient jumps across the interface. Finally, we present extensive numerical validation of the proposed preconditioners, including challenging three-dimensional problems in linear elasticity with heterogeneous material properties.

In this work, robustness refers to the fact that the outer iteration counts and, whenever inexact inner solvers are employed, the total number of inner iterations are essentially insensitive to mesh refinement and to large coefficient jumps across subdomains. The latter are monitored in the numerical experiments and are shown to exhibit only a very mild growth.

We point out that one important reason for studying efficient preconditioners for the elliptic interface problem
with a Lagrange multiplier formulation lies in the fact that its block structure naturally arises as a sub-block of
a four-by-four block system obtained when using a monolithic formulation for the FSI problem with Lagrange multipliers, where additional blocks of such a system are
related to the incompressibility constraint of the fluid. As proven in~\cite{CAUSIN20054506}, monolithic schemes for FSI
do not suffer from the so-called \emph{added mass effect} when the densities of fluid and solid bodies are comparable, as it is the case in the context of biological tissues.
The conditioning of the system arising from FD-DLM applied to the monolithic FSI problem was recently
analyzed in~\cite{boffi2025stability}.

The paper is organized as follows. In Section~\ref{sec:method}, we present the FD-DLM formulation for a generic elliptic interface problem, describe the finite element discretization, recall existence and uniqueness results, and derive the resulting saddle-point system. In Section~\ref{sec:AL_prec}, we derive
the ideal augmented Lagrangian preconditioner for our model problem, while Section~\ref{sec:spectral_ideal} is
devoted to its spectral analysis. In Section~\ref{sec:modified_prec}, we introduce
a modified variant of the ideal AL preconditioner, which is easier to implement and more efficient in practice. Its spectral
analysis is presented in Section~\ref{sec:spectral_modified}. Section~\ref{sec:numerical_experiments} presents
several numerical experiments to validate the proposed preconditioners, demonstrating their robustness and effectiveness
for various problem configurations, including cases with large coefficient jumps. We conclude our experiments by applying the preconditioner to a three-dimensional linear elasticity problem with heterogeneous material properties characterized by different Lamé constants in each subdomain. Finally, in Section~\ref{sec:conclusions}, we summarize our main results and
identify avenues for future research.

\section{Elliptic interface problem and fictitious domain approach}\label{sec:method}

\subsection{Notation}\label{subsec:notation}We start by fixing some notation. Given an open and bounded domain $D$, we denote with $L^2(D)$ the space of square integrable functions on $D$, endowed with the norm $\|\cdot\|_{0,D}$ induced by the inner
product $(\cdot,\cdot)_{D}$. Sobolev spaces are denoted by $W^{s,p}(D)$, where $s\in \mathbb{R}$ refers to the differentiability index and $p \in [1,\infty]$ is the integrability exponent. In the case $p=2$, we use the notation $H^s(D)=W^{s,2}(D)$, with
associated norm $\|\cdot\|_{s,D}$ and seminorm $|\cdot|_{s,D}$. Given the space $H^1(D)$, we denote by $[H^{1}(D)]^{*}$ its dual space, endowed with the
dual norm
\[
  \|\eta\|_{[H^{1}(D)]^{*}} = \sup_{v\in H^1(D)}
  \frac{\langle \eta,v\rangle}{\|v\|_{1,D}},
\]
where $\langle\cdot,\cdot\rangle$ denotes the duality pairing between $[H^{1}(D)]^{*}$ and $H^1(D)$.
We use normal font, e.g. $A$, to denote linear operators, while matrices and vectors are
denoted by the sans serif font, e.g. $\mathsf{A}$ and $\mathsf{x}$, respectively. A calligraphic font, e.g. $\mathcal{A}$, is
used to denote block matrices associated with saddle point systems. The spectrum of a matrix is denoted by $\operatorname{Spec}(\cdot)$. The symbol $\lambda$ is reserved for Lagrange multipliers and, in Section~\ref{sec:numerical_experiments}, for Lamé parameters. We will use the letter $\nu$ to denote a generic eigenvalue of a matrix.

\subsection{Elliptic interface problem}Let $\Omega$ be a domain in $\mathbb{R}^d$, where $d \in \{2,3\}$, with a bounded Lipschitz boundary $\partial \Omega$.
Let $\Omega_1$ and $\Omega_2$ be two subdomains of $\Omega$ such that $\overline{\Omega} = \overline{\Omega}_1 \cup \overline{\Omega}_2$, and let the interface $\Gamma = \overline{\Omega}_1 \cap \overline{\Omega}_2$ be Lipschitz continuous.
We call $\Omega$ the \emph{background domain} and $\Omega_2$ the \emph{immersed domain}, which we assume to be entirely contained in $\Omega$, i.e. $\overline{\Gamma} \cap \partial \Omega = \emptyset$.

We consider the following \emph{elliptic interface problem} with a jump in the coefficients.
\begin{problem}\label{prob:elliptic_interface}
For $i=1,2$, given forcing terms $f_i \colon \Omega_i \rightarrow \mathbb{R}$, and positive coefficients $\beta_i \in L^{\infty}(\Omega_i)$, find $u_1 \colon \Omega_1 \rightarrow \mathbb{R}$ and $u_2 \colon \Omega_2 \rightarrow \mathbb{R}$ such that:
\begin{equation}
  \begin{cases}\label{eqn:model_problem}
    -\operatorname{div} \bigl( \beta_i \nabla u_i \bigr) = f_i
    \>\> \>\>\>\>\>\> \>\>\>\>\>\> \>\>\>\>\>\> \>\>\>\>\>\> \mathrm{in} \> \Omega_i,                                            \\
    \>\> \>\>\>\>\>\>\>\>\>\>\>\>\>\>\>\>\>\>\>\>u_1 = u_2 \>\> \quad\>\>\>\>\>\>\>\>\>\>\>\>\>\>\>\>\>\> \mathrm{on} \> \Gamma, \\
    \>\>\>\>\>\>\beta_1 \nabla u_1 \cdot \mathbf{n}_1 = -\beta_2 \nabla u_2 \cdot \mathbf{n}_2 \>\>\>\>\mathrm{on}\> \Gamma,     \\
    \qquad \qquad  \> \>\>\>u_1 = 0  \>\>\>\>\>\>\>\>\>\>\>\>\>\>\>\>\>\>\>\>\>\> \quad \mathrm{on} \> \partial \Omega_1.
  \end{cases}
\end{equation}
\end{problem}

In the above problem, $\mathbf{n}_i$ ($i=1,2$) denotes the unit vector normal to $\Gamma$, pointing out of $\Omega_i$. The two transmission conditions on $\Gamma$ enforce the continuity of $u_1$ and $u_2$, as well as the continuity of the co-normal derivatives on the interface. In 2D, this model describes the
displacement of a membrane made of two different materials. We assume the coefficients to be bounded by positive constants $\Bar{\beta_1}$ and $\Bar{\beta_2}$, i.e., $\beta_1 > \Bar{\beta_1} > 0$ and $\beta_2 > \Bar{\beta_2} > 0$. Such coefficients $\beta_i$ may represent the stiffness of such materials, $f_i$ the loads
applied to the membrane, and $u_i$ the vertical displacement in $\Omega_i$, respectively. Notably, the condition $u_1=u_2$ on $\Gamma$ implies that the materials are perfectly bonded. For the sake of simplicity, we will assume $\beta_1$ and $\beta_2$ to be positive constants throughout the paper, although the
proposed approach can be extended to variable coefficients without significant modifications.

In this paper, we consider a fictitious domain reformulation of Problem~\eqref{prob:elliptic_interface}, following~\cite{AURICCHIO201536}. In detail, we extend $\beta_1$ and $f_1$, originally defined in $\Omega_1$, to the whole $\Omega$. We denote
the corresponding extensions with $\beta$ and $f$, respectively, so that $\beta_{|\Omega_1} = \beta_1$ and $f_{|\Omega_1} = f_1$. Similarly, $u$ will denote the extension of $u_1$ to $\Omega$, satisfying $u_{|\Omega_1} = u_1$. The extended solution $u$ is then required to match $u_2$ in the immersed domain $\Omega_2$. A graphical
representation of such procedure is shown in Figure~\ref{fig:domain_mesh}.

\begin{figure}[htbp]
  \centering
  \begin{tabular}{c}
    \includegraphics[width=0.69\textwidth]{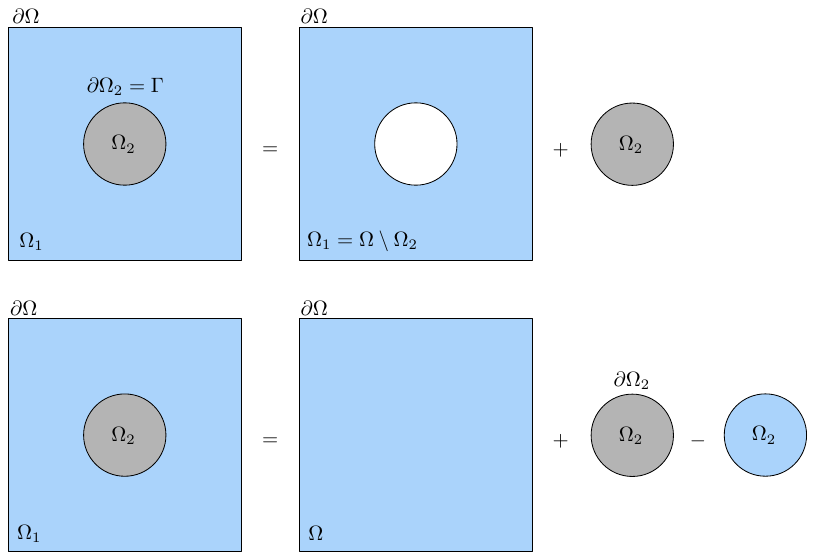}
  \end{tabular}
  \caption{Top row: the initial configuration for Problem~\eqref{prob:elliptic_interface}, involving the domains $\Omega_1$ and $\Omega_2$. Bottom row: the fictitious domain reformulation, where the immersed
    domain $\Omega_2$ is superimposed on $\Omega$. The solution $u$ is defined in the whole $\Omega$, while $u_2$ is defined only in $\Omega_2$. The fictitious contribution is then subtracted at the variational level \emph{on the domain $\Omega_2$}.}
  \label{fig:domain_mesh}
\end{figure}

The condition $u_{|\Omega_2}=u_2$ is enforced at the variational level by introducing the functional space $\Lambda$ and a bilinear form $c \colon \Lambda \times H^1(\Omega_2) \rightarrow \mathbb{R}$, satisfying
\[c(\mu,v_2) = 0 \>\>  \forall \mu \in \Lambda \Longrightarrow v_2 = 0 \>\> \text{in} \>\> \Omega_2.\]
In this work, we consider $\Lambda = [H^{1}(\Omega_2)]^{*}$, the dual space of $H^1(\Omega_2)$, and $c(\mu,v_2) = \langle\mu, v_2\rangle$, the duality pairing between $\Lambda=[H^{1}(\Omega_2)]^{*}$ and $H^1(\Omega_2)$, but other definitions for $\Lambda$ and hence for $c(\cdot,\cdot)$ are possible~\cite{Alshehri_2025}. With this choice, the norm $\|\cdot\|_\Lambda$ is the dual norm introduced in Section~\ref{subsec:notation}. Finally, the condition $u_{|\Omega_2} = u_2$ is imposed by introducing a Lagrange multiplier $\lambda \in \Lambda$ as unknown in the following weak formulation of Problem~\eqref{prob:elliptic_interface}.
\begin{problem}
\label{prob:weak_form}
Given $f \in L^2(\Omega)$ and $f_2 \in L^2(\Omega_2)$, $\beta \in L^\infty(\Omega)$ and $\beta_2 \in L^\infty(\Omega_2)$, find $(u,u_2,\lambda) \in V \times V_2 \times \Lambda$ such that
\begin{eqnarray}
  \label{eqn:LM1}
  (\beta \nabla u, \nabla v)_{\Omega} + c(\lambda, v_{|\Omega_2}) &=& (f,v)_{\Omega} \>\>\>\>\>\>\>\>\>\>\>\>\>\>\>\>\>\>\>\quad \forall v \in V \coloneqq H_0^1(\Omega), \\
  \label{eqn:LM2}
  \bigl((\beta_2 - \beta) \nabla u_2, \nabla v_2\bigr)_{\Omega_2}  - c(\lambda, v_2)&=& (f_2 -f ,v_2)_{\Omega_2} \qquad\> \forall v_2 \in V_2 \coloneqq H^1(\Omega_2), \\
  \label{eqn:LM3}
  c(\mu ,u_{|\Omega_2}-u_2) &=& 0 \qquad \>\>\>\>\>\>\>\>\>\>\>\>\>\>\>\>\qquad \forall \mu \in \Lambda \coloneqq [H^{1}(\Omega_2)]^{*}.
\end{eqnarray}
\end{problem}

Problem~\eqref{prob:weak_form} is the standard weak formulation with fictitious domain approach of the original interface Problem~\eqref{prob:elliptic_interface} (details can be found in~\cite{AURICCHIO201536}). In operator matrix form, the weak formulation can be written as follows:
\begin{equation} \label{eq:opform}
  \begin{pmatrix}
    A & 0    & \rvline & C^T    \\
    0 & A_2  & \rvline & -C_2^T \\
    C & -C_2 & \rvline & 0
  \end{pmatrix}
  \begin{pmatrix}
    u   \\
    u_2 \\
    \lambda
  \end{pmatrix}
  =
  \begin{pmatrix}
    F \\
    G \\
    0
  \end{pmatrix},
\end{equation}
where $A$ and $A_2$ are the operators associated with bilinear forms $(\beta \nabla u, \nabla v)_{\Omega}$
and $\Bigl((\beta_2 - \beta) \nabla u_2,\nabla v_2\Bigr)_{\Omega_2}$, respectively, while $(C,-C_2)$ is the operator pair
associated with $c(\mu,u_{|\Omega_2} - u_2)$, which has kernel $$\mathbb{K} = \{(v,v_2) \in V \times V_2: c(\mu,v_{|\Omega_2} - v_2) = 0 \quad \forall \mu \in \Lambda\}.$$
We now collect some basic facts about existence and uniqueness for this saddle point problem.
\begin{proposition}[\cite{AURICCHIO201536}, Prop. 1]
  Given $f \in L^2(\Omega)$ and $f_2 \in L^2(\Omega_2)$, Problem~\eqref{prob:weak_form} has a unique solution $(u,u_2,\lambda) \in V \times V_2 \times \Lambda$ such that
  the following estimate holds:
  \begin{equation}
    \label{eqn:elliptic_estimate}
    |u|_{1,\Omega} + \|u_2\|_{1,\Omega_2} + \|\lambda\|_{\Lambda} \leq C \left( \|f\|_{0,\Omega} + \|f_2\|_{0,\Omega_2} \right),
  \end{equation}
  where $C$ is a positive constant.
\end{proposition}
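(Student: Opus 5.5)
We prove the proposition by casting Problem~\ref{prob:weak_form} as an abstract saddle point problem on the product space $\mathbb{V}=V\times V_2$. The bilinear form is
\[
\mathsf{a}\bigl((u,u_2),(v,v_2)\bigr)=(\beta\nabla u,\nabla v)_\Omega+((\beta_2-\beta)\nabla u_2,\nabla v_2)_{\Omega_2},
\]
and the constraint form is $\mathsf{b}\bigl((v,v_2),\mu\bigr)=c(\mu,v_{|\Omega_2}-v_2)$. We then apply the Babu\v{s}ka--Brezzi theory. Three ingredients are needed: continuity of both forms, an inf-sup condition for $\mathsf{b}$, and coercivity of $\mathsf{a}$ on the kernel $\mathbb{K}$. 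The stability estimate~\eqref{eqn:elliptic_estimate} then follows from the standard a priori bound. We also use that the right-hand side functionals satisfy $|(f,v)_\Omega|+|(f_2-f,v_2)_{\Omega_2}|\le C(\|f\|_{0,\Omega}+\|f_2\|_{0,\Omega_2})\|(v,v_2)\|_{\mathbb{V}}$, which holds by Poincar\'e on $H^1_0(\Omega)$.

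\textbf{Continuity} is immediate. The coefficients lie in $L^\infty$, so $\mathsf{a}$ is bounded. The form $\mathsf{b}$ is bounded because
\[
|\langle\mu,v_{|\Omega_2}-v_2\rangle|\le\|\mu\|_\Lambda\bigl(\|v\|_{1,\Omega}+\|v_2\|_{1,\Omega_2}\bigr).
\]

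\textbf{Inf-sup.} Given $\mu\in\Lambda=[H^1(\Omega_2)]^*$, we choose $v=0$. By Riesz representation in $H^1(\Omega_2)$ there is $w\in H^1(\Omega_2)$ with $\langle\mu,w\rangle=\|\mu\|_\Lambda\|w\|_{1,\Omega_2}$. Taking $v_2=-w$ gives $\mathsf{b}((0,-w),\mu)=\|\mu\|_\Lambda\|(0,-w)\|_{\mathbb V}$, so the inf-sup constant equals $1$. This step is trivial precisely because $\Lambda$ is taken to be the full dual of $V_2$.

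\textbf{Kernel coercivity.} Since $\Lambda$ is the full dual, $(v,v_2)\in\mathbb{K}$ means exactly $v_2=v_{|\Omega_2}$. On the kernel,
\[
\mathsf{a}\bigl((v,v_2),(v,v_2)\bigr)=\int_{\Omega_1}\beta_1|\nabla v|^2+\int_{\Omega_2}\beta_2|\nabla v|^2\ge\min(\bar\beta_1,\bar\beta_2)\,|v|_{1,\Omega}^2,
\]
because the $\beta$-contributions on $\Omega_2$ cancel. Poincar\'e on $H^1_0(\Omega)$ converts this into control of $\|v\|_{1,\Omega}^2$. Moreover $\|v_2\|_{1,\Omega_2}\le\|v\|_{1,\Omega}$, so the whole product norm is controlled, giving $\mathsf{a}(\cdot,\cdot)\ge\alpha\|(v,v_2)\|_{\mathbb V}^2$ on $\mathbb{K}$ with $\alpha$ depending on $\bar\beta_1,\bar\beta_2$ and the Poincar\'e constant. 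This is where the sign issue matters: $\beta_2-\beta$ may be negative, so $\mathsf{a}$ is not coercive on all of $\mathbb V$. It is the kernel restriction that rescues coercivity, and this is the only delicate point of the argument.

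Brezzi's theorem now gives existence, uniqueness, and the bound
\[
\|u\|_{1,\Omega}+\|u_2\|_{1,\Omega_2}+\|\lambda\|_\Lambda\le C\bigl(\|f\|_{0,\Omega}+\|f_2\|_{0,\Omega_2}\bigr),
\]
which implies~\eqref{eqn:elliptic_estimate}. Finally, equivalence with Problem~\ref{prob:elliptic_interface} can be checked by testing: take $v_2=v_{|\Omega_2}$ and add~\eqref{eqn:LM1}--\eqref{eqn:LM2}, which eliminates $\lambda$ and recovers the standard weak form of the interface problem on $H^1_0(\Omega)$. This is not needed for the stated proposition, but it confirms uniqueness of $(u,u_2)$ independently.
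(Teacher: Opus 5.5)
Your proof is correct and takes the same route as the paper. The paper invokes the Babu\v{s}ka--Brezzi theory through the continuous inf-sup condition~\eqref{eqn:inf_sup} and ellipticity on the kernel~\eqref{eqn:elliptic_kernel}, deferring their verification to the cited references; your explicit checks supply exactly those two ingredients (inf-sup constant $1$ via $v=0$ and the Riesz representative, and kernel $v_2=v_{|\Omega_2}$ making the $\beta$-terms cancel on $\Omega_2$). One minor aside: $\mathsf{a}$ fails to be coercive on all of $V\times V_2$ even when $\beta_2>\beta$, because constants on $\Omega_2$ annihilate the second term (cf.\ Remark~\ref{rmk:singularity}), so the kernel restriction is needed regardless of the sign of the jump; this does not affect your argument.
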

The well-posedness of the saddle point Problem~\eqref{prob:weak_form} follows from the fulfillment of the following sufficient conditions~\cite{boffi2013mixed}. Proofs can be found in~\cite{BoffiGastaldi2024,AURICCHIO201536}.
\begin{itemize}
  \item \textbf{Ellipticity on the kernel.} There exists a constant $\hat{\alpha} > 0$ such that
        \begin{equation}
          \label{eqn:elliptic_kernel}
          (\beta \nabla v, \nabla v)_{\Omega} + \bigl((\beta_2 - \beta) \nabla v_2, \nabla v_2\bigr)_{\Omega_2} \geq \hat{\alpha}  (|v|_{1,\Omega}^2 + \|v_2\|_{1,\Omega_2}^2) \qquad \forall (v,v_2) \in \mathbb{K}.
        \end{equation}
  \item \textbf{Inf-sup condition.} There exists a constant $\hat{\theta} > 0$ such that
        \begin{equation}
          \label{eqn:inf_sup}
          \sup_{(v,v_2) \in V \times V_2} \frac{c(\mu, v_{|\Omega_2} - v_2)}{\bigl(|v|_{1,\Omega}^2 + \|v_2\|_{1,\Omega_2}^2\bigr)^{\frac12}} \geq \hat{\theta} \|\mu\|_{\Lambda} \qquad \forall \mu \in \Lambda.
        \end{equation}
\end{itemize}

\vspace{0.1cm}
\subsection{Finite element discretization}\label{subsec:fem}

Problem~\eqref{prob:weak_form} is discretized by mixed finite elements. We employ two \emph{independent}, shape-regular mesh families for $\Omega$ and $\Omega_2$, which we denote by $\mathcal{T}_h$ and $\mathcal{T}_{2,h}$, respectively. Notably, it is customary to take the background domain as a structured quadrilateral (or hexahedral) discretization of a $d$-dimensional box. However, we require the immersed mesh to be quasi-uniform to exploit standard mass-matrix scaling and inverse inequalities during the spectral analysis. We denote by $h_{\Omega}$ the mesh size of $\mathcal{T}_h$, and by $h_{\Omega_2}$ the mesh size
of $\mathcal{T}_{2,h}$. As recently shown in~\cite{boffi2025stability} for fictitious domain discretizations of fluid-structure interaction problems with distributed Lagrange multipliers, the condition number of the algebraic system depends explicitly on both mesh sizes $h_{\Omega}$ and $h_{\Omega_2}$. The same mechanism is expected to apply in our setting, since it is independent of the specific bulk problem. In particular, the ratio $\frac{h_{\Omega}}{h_{\Omega_2}}$ should not be taken too extreme in either direction: a very fine immersed mesh inflates the condition number, while a very coarse one would under-resolve the immersed geometry and degrade the accuracy of the numerical solution. Given these considerations, in the numerical tests, we choose $h_{\Omega_2}$ and $h_{\Omega}$ to be comparable as a reasonable compromise, and will perform our numerical studies under simultaneous refinement of both grids.

Moreover, for a generic element $K \in \mathcal{T}_h$ or $\mathcal{T}_{2,h}$, we define $\mathcal{Q}^p(K)$, $p\geq1$, to be the space of polynomials defined on $K$ of degree at most $p$ in each variable.

The finite element spaces are defined as follows:  $V_h \subset H_0^1(\Omega)$, $V_{2,h}\subset H^1(\Omega_2)$, and $\Lambda_h \subset \Lambda$. At the discrete level, if $\Lambda_h \subset L^2(\Omega_2)$, the duality pairing between $\Lambda$ and its dual can be evaluated
using the usual scalar product in $L^2(\Omega_2)$. Hence,
\[c(\mu_h, v_{2,h}) = (\mu_h, v_{2,h})_{\Omega_2} \>\> \forall \mu_h \in \Lambda_h, \>\> \forall v_{2,h} \in V_{2,h}.\]

All in all, the discrete formulation of Problem~\eqref{prob:weak_form} reads as follows.
\begin{problem} \label{prob:discrete}
Given $f\in L^2(\Omega)$ and $f_2\in L^2(\Omega_2)$, find $(u_h,u_{2,h},\lambda_h)\in V_h\times V_{2,h}\times\Lambda_h$ such that
\begin{align}
  (\beta\nabla u_h,\nabla v_h)_\Omega+(\lambda_h,v_h|_{\Omega_2})_{\Omega_2}                & =(f,v_h)_\Omega              &  & \forall v_h\in V_h, \nonumber        \\
  ((\beta_2-\beta)\nabla u_{2,h},\nabla v_{2,h})_{\Omega_2}-(\lambda_h, v_{2,h})_{\Omega_2} & =(f_2-f, v_{2,h})_{\Omega_2} &  & \forall v_{2,h}\in V_{2,h},          \\
  (\mu_h,u_h|_{\Omega_2}-u_{2,h})_{\Omega_2}                                                & =0                           &  & \forall\mu_h\in\Lambda_h.  \nonumber
\end{align}
\end{problem}
In the following, we recall the discrete counterparts of conditions~\eqref{eqn:elliptic_kernel} and~\eqref{eqn:inf_sup}, which will be needed for the spectral analysis of the preconditioner. The ellipticity
on the kernel is satisfied when $\beta_2 > \beta> \bar{\beta}>0$, although the numerical findings reported in~\cite{BoffiGastaldi2024} suggest that
this assumption might be relaxed. We first need to introduce the discrete kernel of $c(\cdot, \cdot):$
$$\mathbb{K}_h = \{(v_h,v_{2,h}) \in V_h \times V_{2,h}: (\mu_h,v_{h|\Omega_2} - v_{2,h})_{\Omega_2} = 0 \quad \forall \mu_h \in \Lambda_h\}.$$ The next two propositions
ensure existence and uniqueness of the discrete solution to Problem~\eqref{prob:discrete}.  Thanks to these conditions, the theory of saddle point problems yields the usual quasi-optimal
error estimate (cfr.~\cite[Th. 5.2.2]{boffi2013mixed}). Proofs of Propositions~\ref{prop:elliptic_kernel_discrete} and~\ref{prop:inf_sup_discrete}, which depend on the choice of finite dimensional subspaces, can be found in~\cite{BoffiGastaldi2024,AURICCHIO201536}. Indeed, to
solve Problem~\eqref{prob:discrete}, a classical choice is to use continuous linear elements $\mathcal{Q}^1$ for all the variables:
\begin{align*}
  V_h       & = \{v_h \in H_0^1(\Omega): v_h|_K \in \mathcal{Q}^1(K) \text{ for all } K \in \mathcal{T}_h\} \subset H_0^1(\Omega),             \\
  V_{2,h}   & = \{v_{2,h} \in H^1(\Omega_2): v_{2,h}|_K \in \mathcal{Q}^1(K) \text{ for all } K \in \mathcal{T}_{2,h}\} \subset H^1(\Omega_2), \\
  \Lambda_h & = V_{2,h}.
\end{align*}
Other stable choices satisfying the discrete inf-sup condition are also possible, such as $\mathcal{Q}^2-\mathcal{Q}^2-\mathcal{Q}^0$ or $\mathcal{Q}^1-(\mathcal{Q}^1+\mathfrak{B})-\mathcal{Q}^0$, where
$\mathcal{Q}^1+\mathfrak{B}$ denotes the enrichment of $\mathcal{Q}^1$ elements with bubble functions defined on each element $K$ and vanishing on $\partial K$ (see Remark~\ref{rmk:spaces}).

\begin{proposition}[Discrete ellipticity on the kernel]
  \label{prop:elliptic_kernel_discrete}
  Let us consider $V_h, V_{2,h}$ and $\Lambda_h$ as defined above, and assume that $\beta_2 > \beta > \bar{\beta}>0$ in $\Omega_2$. Then, there exists a constant $\alpha > 0$, independent of the
  discretization parameters $h_{\Omega}$ and $h_{\Omega_2}$ such that for all $(v_h,v_{2,h}) \in \mathbb{K}_h$, the following inequality holds true.
  \begin{equation}
    \label{eqn:elliptic_kernel_discrete}
    (\beta \nabla v_h, \nabla v_h)_{\Omega} + \bigl((\beta_2 - \beta) \nabla v_{2,h}, \nabla v_{2,h}\bigr)_{\Omega_2} \geq \alpha (|v_h|_{1,\Omega}^2 + \|v_{2,h}\|_{1,\Omega_2}^2).
  \end{equation}
\end{proposition}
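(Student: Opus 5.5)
The plan is to exploit the specific choice $\Lambda_h = V_{2,h}$ to describe the discrete kernel $\mathbb{K}_h$ explicitly. Once that is done, the only quantity not immediately controlled by the left-hand side, namely the $L^2$ part of $\|v_{2,h}\|_{1,\Omega_2}$, can be bounded by $|v_h|_{1,\Omega}$ through a Poincaré inequality.

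\emph{Step 1 (characterize the kernel).} Let $(v_h,v_{2,h}) \in \mathbb{K}_h$. Then $(\mu_h, v_{h|\Omega_2} - v_{2,h})_{\Omega_2} = 0$ for all $\mu_h \in \Lambda_h = V_{2,h}$. Since $v_{2,h}\in V_{2,h}$, this says exactly that $v_{2,h} = \Pi_h (v_{h|\Omega_2})$, where $\Pi_h$ is the $L^2(\Omega_2)$-orthogonal projection onto $V_{2,h}$. This holds even though $v_{h|\Omega_2}$ is not in $V_{2,h}$, since the two meshes are unrelated. Being an orthogonal projection, $\Pi_h$ has norm one in $L^2$, so
\[
\|v_{2,h}\|_{0,\Omega_2} \le \|v_h\|_{0,\Omega_2} \le \|v_h\|_{0,\Omega} \le C_P\, |v_h|_{1,\Omega},
\]
where the last inequality is the Poincaré inequality on $H^1_0(\Omega)$, valid because $V_h\subset H^1_0(\Omega)$. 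The constant $C_P$ depends only on $\Omega$, and the bound involves no mesh quantities. Testing the kernel condition with $\mu_h = v_{2,h}$ and applying Cauchy--Schwarz gives the same bound directly, without naming $\Pi_h$.

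\emph{Step 2 (lower bound of the form).} The coefficients are constants with $\beta_2 > \beta > \bar\beta$, so $\delta := \beta_2-\beta>0$ is a fixed positive constant. Hence
\[
(\beta\nabla v_h,\nabla v_h)_\Omega + (\delta \nabla v_{2,h},\nabla v_{2,h})_{\Omega_2} \ge \bar\beta\, |v_h|_{1,\Omega}^2 + \delta\, |v_{2,h}|_{1,\Omega_2}^2 .
\]
Split $\bar\beta|v_h|_{1,\Omega}^2$ into two halves and use Step 1 on one half:
\[
\tfrac{\bar\beta}{2}|v_h|_{1,\Omega}^2 \ge \tfrac{\bar\beta}{2C_P^2}\|v_{2,h}\|_{0,\Omega_2}^2 .
\]
Combining the two displays yields the claim with
\[
\alpha = \min\Bigl\{\tfrac{\bar\beta}{2},\ \tfrac{\bar\beta}{2C_P^2},\ \delta\Bigr\},
\]
which is independent of $h_\Omega$ and $h_{\Omega_2}$.

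\emph{Expected obstacle.} The only delicate point is controlling $\|v_{2,h}\|_{0,\Omega_2}$. The form only sees the seminorm of $v_{2,h}$, and the kernel constraint is imposed only weakly and on non-matching meshes. I expect this to go through cleanly with the $L^2$-stability of the projection as in Step 1, so no $H^1$-stability of $\Pi_h$, and hence no quasi-uniformity of $\mathcal{T}_{2,h}$, should be needed. For other pairs where $\Lambda_h \ne V_{2,h}$ (e.g.\ $\mathcal{Q}^0$ multipliers), this step would instead require an approximation or stability property linking $\Lambda_h$ and $V_{2,h}$. There I would first try to bound $\|v_{2,h}-\Pi^{\Lambda}_h v_{2,h}\|_0 \lesssim h_{\Omega_2}|v_{2,h}|_1$ and then absorb this term into the seminorm.
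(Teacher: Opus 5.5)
Your proof is correct and essentially follows the standard argument of the references to which the paper defers this proposition, since the paper gives no proof of its own. There, testing the kernel condition with $\mu_h=v_{2,h}\in\Lambda_h=V_{2,h}$ gives $\|v_{2,h}\|_{0,\Omega_2}\le\|v_h\|_{0,\Omega}\le C_P|v_h|_{1,\Omega}$, and the uniform positivity of $\beta_2-\beta$ (a constant here) controls $|v_{2,h}|_{1,\Omega_2}$, so $\alpha$ is independent of $h_\Omega$ and $h_{\Omega_2}$ without any quasi-uniformity assumption.
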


\vspace{0.05cm}
\begin{proposition}[Discrete inf-sup condition]
  \label{prop:inf_sup_discrete}
  Let $V_h, V_{2,h}$ and $\Lambda_h$ as defined above. Then, there exists a constant $\theta > 0$, independent of the discretization parameters $h_{\Omega}$ and $h_{\Omega_2}$, such that
  the following inf-sup condition holds:
  \begin{equation}
    \label{eqn:inf_sup_discrete}
    \sup_{(v_h,v_{2,h}) \in V_h \times V_{2,h}} \frac{(\mu_h, v_{h|\Omega_2} - v_{2,h})_{\Omega_2}}{\bigl(|v_h|_{1,\Omega}^2 + \|v_{2,h}\|_{1,\Omega_2}^2\bigr)^{\frac12}} \geq \theta \|\mu_h\|_{\Lambda} \qquad \forall \mu_h \in \Lambda_h.
  \end{equation}
\end{proposition}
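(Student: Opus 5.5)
The inf-sup bound in Proposition~\ref{prop:inf_sup_discrete} will follow from a direct choice of test function, exploiting the freedom to use the second component $v_{2,h}$ alone. Fix $\mu_h\in\Lambda_h=V_{2,h}$, take $v_h=0$, and maximize only over $v_{2,h}\in V_{2,h}$. Up to sign, which is absorbed by replacing $v_{2,h}$ with $-v_{2,h}$, this gives
\[
\sup_{(v_h,v_{2,h})}\frac{(\mu_h, v_{h|\Omega_2}-v_{2,h})_{\Omega_2}}{\bigl(|v_h|_{1,\Omega}^2+\|v_{2,h}\|_{1,\Omega_2}^2\bigr)^{1/2}}
\;\ge\;\sup_{v_{2,h}\in V_{2,h}}\frac{(\mu_h,v_{2,h})_{\Omega_2}}{\|v_{2,h}\|_{1,\Omega_2}}
=:\|\mu_h\|_{-1,h}.
\]
This reduces the claim to showing that the discrete dual norm $\|\mu_h\|_{-1,h}$ controls the continuous dual norm $\|\mu_h\|_{\Lambda}=\sup_{v\in H^1(\Omega_2)}(\mu_h,v)_{\Omega_2}/\|v\|_{1,\Omega_2}$ uniformly in $h_{\Omega_2}$. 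The background mesh never enters, so the constant will be independent of $h_\Omega$ automatically.

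For that comparison I will use a Fortin-type argument with a projector $\Pi_h:H^1(\Omega_2)\to V_{2,h}$ that satisfies two properties:
\[
(\mu_h, v-\Pi_h v)_{\Omega_2}=0 \quad \forall\mu_h\in\Lambda_h,
\qquad
\|\Pi_h v\|_{1,\Omega_2}\le C\|v\|_{1,\Omega_2}.
\]
Because $\Lambda_h=V_{2,h}$, the natural choice is the $L^2(\Omega_2)$-orthogonal projection onto $V_{2,h}$, which satisfies the first property by definition. Given both properties,
\[
\frac{(\mu_h,v)_{\Omega_2}}{\|v\|_{1,\Omega_2}}=\frac{(\mu_h,\Pi_h v)_{\Omega_2}}{\|v\|_{1,\Omega_2}}\le C\,\frac{(\mu_h,\Pi_h v)_{\Omega_2}}{\|\Pi_h v\|_{1,\Omega_2}}\le C\|\mu_h\|_{-1,h}.
\]
Taking the supremum over $v$ yields the statement with $\theta=1/C$.

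The main obstacle is the $H^1$-stability of the $L^2$ projection. I will obtain it from the quasi-uniformity of $\mathcal{T}_{2,h}$, which was assumed precisely for this purpose. Let $I_h$ be a Scott--Zhang (or Clément) quasi-interpolant that is $H^1$-stable and approximates in $L^2$ to order $h_{\Omega_2}$. Then
\[
|\Pi_h v|_{1}\le |I_h v|_1+|\Pi_h v-I_hv|_1\le C|v|_1+Ch_{\Omega_2}^{-1}\|\Pi_h(v-I_hv)\|_0\le C\|v\|_1 .
\]
The middle step uses the inverse inequality on $V_{2,h}$ together with $\Pi_h I_h v=I_h v$. The last step uses the $L^2$-stability of $\Pi_h$ and the approximation property $\|v-I_hv\|_0\le Ch_{\Omega_2}|v|_1$. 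The $L^2$ part of the $H^1$ norm is handled trivially by the same $L^2$-stability.

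For the other stable pairs mentioned in Remark~\ref{rmk:spaces}, where $\Lambda_h\neq V_{2,h}$, I would replace $\Pi_h$ by a suitable local Fortin operator built elementwise. For example, with $\mathcal{Q}^0$ multipliers one uses an interpolant corrected by bubbles so that element means are preserved. I would not pursue this case in detail here.
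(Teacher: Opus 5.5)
Your proposal is correct and follows essentially the standard argument. The paper gives no proof of its own but defers to \cite{BoffiGastaldi2024,AURICCHIO201536}, where the bound is obtained the same way: take $v_h=0$, then use the $L^2(\Omega_2)$-projection onto $V_{2,h}$, which is $H^1$-stable on the quasi-uniform immersed mesh, as a Fortin operator. One cosmetic point: apply your inequality chain only to those $v$ with $(\mu_h,v)_{\Omega_2}\ge 0$ and $\Pi_h v\neq 0$, which does not change the supremum.
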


We denote by $\{{\varphi}_i\}_{i=1}^n$, $\{\phi_j\}_{j=1}^m$, and $\{\psi_k\}_{k=1}^\ell$ the basis functions
of $V_h$, $V_{2,h}$ and $\Lambda_h$. The finite element discretization of Problem~\eqref{prob:discrete} yields the
following linear system of size $(n+m+\ell)\times (n+m+\ell)$ for $(\mathsf{u},\mathsf{u}_2,\mathsf{\lambda})$
\begin{equation}
  \begin{bmatrix}\label{eqn:matrix}
    \mathsf{A} & 0             & \mathsf{C^T}    \\
    \mathsf{0} & \mathsf{A_2}  & \mathsf{-C_2^T} \\
    \mathsf{C} & \mathsf{-C_2} & 0
  \end{bmatrix}
  \begin{bmatrix}
    \mathsf{u}   \\
    \mathsf{u}_2 \\
    \mathsf{\lambda}
  \end{bmatrix}=
  \begin{bmatrix}
    \mathsf{ f} \\
    \mathsf{g}  \\
    \mathsf{0}
  \end{bmatrix},
\end{equation}where
\begin{eqnarray}
  \begin{aligned}\label{eqn:matrices}
    (\mathsf{A})_{i,j}   & = \int_{\Omega}    \beta \nabla\varphi_i \cdot \nabla\varphi_j,     & \qquad
    (\mathsf{A_2})_{i,j} & = \int_{\Omega_2} (\beta_2 -\beta) \nabla\phi_i \cdot \nabla\phi_j,          \\
    (\mathsf{C})_{i,k}   & = (\psi_k, \varphi_i)_{\Omega_2}=\int_{\Omega_2} \psi_k \varphi_i,  & \qquad
    (\mathsf{C_2})_{k,j} & = (\phi_j,\psi_k)_{\Omega_2}=\int_{\Omega_2} \psi_k \phi_j,                  \\
    (\mathsf{f})_{i}     & = (f, \varphi_i)_{\Omega}=\int_{\Omega} f \varphi_i,                & \qquad
    (\mathsf{g})_{j}     & = (f_2 - f, \phi_j)_{\Omega_2}=\int_{\Omega_2} (f_2 - f) \phi_j.
  \end{aligned}
\end{eqnarray}
Notice that choosing $V_{2,h}\equiv\Lambda_h$ implies that the basis functions for the multiplier space $\Lambda_h$ coincide with those for $V_{2,h}$, that is, $\{\psi_k\}_{k=1}^m = \{\phi_j\}_{j=1}^m$. In this particular case,
the matrix $\mathsf{C_2}$ coincides with the mass matrix defined on the multiplier space $\Lambda_h$, whose generic entry is \begin{equation}\label{eqn:mass_matrix}
  (\mathsf{M})_{i,j} \coloneqq (\psi_i,\psi_j)_{\Omega_2}=\int_{\Omega_2} \psi_i \psi_j.
\end{equation}

\begin{remark}[Singularity of $\mathsf{A_2}$]
  \label{rmk:singularity}
  Due to the lack of boundary conditions on $\Omega_2$, the discretization with
  finite elements of the bilinear form $A_2$ will result in a singular
  matrix. More precisely, it coincides with the matrix of the pure Neumann problem defined
  on $\Omega_2$, for which the solution is unique up to an additive constant and hence $\ker(\mathsf{A_2}) = \operatorname{span}\{\mathsf{1}\}$. This does not imply
  that the resulting linear system of equations is singular, but poses significant challenges
  in the construction of a preconditioner.
\end{remark}
\begin{remark}[Other mixed discretizations]
  \label{rmk:spaces}
  Recent inf-sup stable discretizations based on discontinuous multipliers
  such as $\mathcal{Q}^2-\mathcal{Q}^2-\mathcal{Q}^0$ or $\mathcal{Q}^1-(\mathcal{Q}^1+\mathfrak{B})-\mathcal{Q}^0$
  lead to $\dim(V_{2,h})\ne \dim(\Lambda_h)$, and hence to a rectangular matrix $\mathsf{C_2}$. We stress, however, that the forthcoming spectral analysis will hold regardless of
  the specific choice of finite element spaces, as long as stability is guaranteed.

  For ease of implementation, we employ a square mass matrix in most of the numerical experiments. However, we also present results obtained using a discontinuous space for the multiplier. The influence
  of this choice on the performance of a multigrid preconditioner for this problem has been
  recently investigated in~\cite{alshehri2025multigridpreconditioningfddlmmethod}.
\end{remark}

We point out that the flexibility of employing independent meshes for $\Omega$ and $\Omega_2$ comes
at the price of a certain implementation effort. The assembly procedure for the interface matrix
$\mathsf{C}$ is far from trivial. Indeed, the
evaluation of $c(\cdot,\cdot)$ as a standard $L^2(\Omega_2)$ scalar product requires the integration of the product of two basis functions, $\psi_k$ and
$\varphi_i$, over the immersed mesh $\mathcal{T}_{2,h}$. However, $\psi_k$ is defined on the mesh $\mathcal{T}_{2,h}$, while $\varphi_i$ is defined
on the background mesh $\mathcal{T}_h$. The efficient numerical integration of such terms requires
the usage of suitable geometric search procedures and efficient collision detection algorithms, to identify pairs of
background and immersed elements that overlap. The interested reader can find detailed discussions
about this topic in~\cite{BoffiGastaldi2024,boffi2023comparison,BOFFI2022115650,boffi2024quadratureerrorestimatesnonmatching}. Finally, by virtue
of the inf-sup condition in~\eqref{eqn:inf_sup_discrete}, matrix $\mathsf{C}$ must have full row rank.

\section{Augmented Lagrangian-based preconditioning}\label{sec:AL_prec}
In this section, we derive an augmented Lagrangian-based preconditioner
for the linear system~\eqref{eqn:matrix}, extending the strategy investigated in~\cite{BENZI2026118522} for preconditioning other fictitious domain formulations. The derivation follows the AL approach introduced
in~\cite{ALprec} for the Oseen problem. The fundamental idea behind the AL approach is to
replace the original linear system
\begin{equation}\label{eqn:original}
  \begin{bmatrix}
    \mathsf{A} & 0             & \mathsf{C^T}    \\
    \mathsf{0} & \mathsf{A_2}  & \mathsf{-C_2^T} \\
    \mathsf{C} & \mathsf{-C_2} & 0
  \end{bmatrix}
  \begin{bmatrix}
    \mathsf{u}   \\
    \mathsf{u}_2 \\
    \mathsf{\lambda}
  \end{bmatrix}=
  \begin{bmatrix}
    \mathsf{ f} \\
    \mathsf{g}  \\
    \mathsf{0}
  \end{bmatrix} \qquad \text{ or } \qquad \mathcal{A}\mathsf{x} = \mathsf{b},
\end{equation}with an equivalent formulation. To this end, let us introduce a strictly positive real number $\gamma$, and a symmetric and positive definite matrix $\mathsf{W}$ that will need
to be chosen properly. Our goal is to exploit the constraint imposed by the last row
of the system. To do so, we start by augmenting the first equation with the term $\gamma\mathsf{C^T W^{-1}C u}$, obtaining
\begin{equation}\label{eqn:first_row_augmented}
  \mathsf{A u + C^T \lambda + \gamma C^T W^{-1}C u = f + \gamma C^T W^{-1}C u}. \\
\end{equation}
The last row of the system gives $\mathsf{Cu = C_2 u_2}$, which plugged into~\eqref{eqn:first_row_augmented} gives
\begin{equation}
  \mathsf{\Bigl(A +  \gamma C^T W^{-1}C \Bigr)u -\gamma C^T W^{-1}C_2 u_2 +C^T \lambda  = f }. \\
\end{equation}Proceeding in the same vein, we augment the second row with the term $\gamma \mathsf{C_2^T W^{-1}C_2 u_2}$ , which yields
\begin{equation}\label{eqn:second_row_augmented}
  \mathsf{-\gamma C_2 W^{-1}C u + \Bigl(A_2 +  \gamma C_2^T W^{-1}C_2 \Bigr)u_2 -C_2 \lambda  = g }. \\
\end{equation}With Equations~\eqref{eqn:first_row_augmented} and \eqref{eqn:second_row_augmented}, the whole augmented
system reads:

\begin{equation}
  \begin{bmatrix}\label{eqn:ALmatrix}
    \mathsf{A +  \gamma C^T W^{-1}C} & \mathsf{-\gamma C^T W^{-1}C_2}         & \mathsf{C^T}    \\
    \mathsf{-\gamma C_2^T W^{-1}C}   & \mathsf{A_2 +  \gamma C_2^T W^{-1}C_2} & \mathsf{-C_2^T} \\
    \mathsf{C}                       & \mathsf{-C_2}                          & 0
  \end{bmatrix}
  \begin{bmatrix}
    \mathsf{u}   \\
    \mathsf{u_2} \\
    \mathsf{\lambda}
  \end{bmatrix}=
  \begin{bmatrix}
    \mathsf{f} \\
    \mathsf{g} \\
    \mathsf{0}
  \end{bmatrix}\qquad \text{ or } \qquad \mathcal{A}_{\gamma}\mathsf{x} = \mathsf{b}.
\end{equation}
Having defined the augmented two-by-two block as:
\begin{equation}
  \mathsf{A_{\gamma}}\coloneqq
  \begin{bmatrix}\label{eqn:ALmatrix2by2}
    \mathsf{A +  \gamma C^T W^{-1}C} & \mathsf{-\gamma C^T W^{-1}C_2}         \\
    \mathsf{-\gamma C_2^T W^{-1}C}   & \mathsf{A_2 +  \gamma C_2^T W^{-1}C_2}
  \end{bmatrix},
\end{equation}
and the operator pair $\mathsf{B}$ as

\begin{equation}\mathsf{B}\coloneqq
  \begin{bmatrix}\label{eqn:B}
    \mathsf{C} & \mathsf{-C_2}
  \end{bmatrix},
\end{equation} we have that the system matrix in~\eqref{eqn:ALmatrix} can be rewritten (in compact form) as
\begin{equation}\label{eqn:ALmatrix_compact}
  \mathcal{A_{\gamma}} \coloneqq
  \begin{bmatrix}
    \mathsf{A_{\gamma}} & \mathsf{B^T} \\
    \mathsf{B}          & 0
  \end{bmatrix}.
\end{equation}
Notably, the augmentation
also removes the singularity in the original $(2,2)$-block of the system (cf. Remark~\ref{rmk:singularity}). Moreover,  $\mathsf{A_\gamma}$ can be expressed as:
\begin{equation}\label{eqn:A_comp}\mathsf{A_\gamma}=\widetilde{\mathsf{A}} + \gamma \mathsf{B^T W^{-1} B},
\end{equation}
where
\begin{equation}\label{eqn:Atilde}
  \widetilde{\mathsf{A}} \coloneqq \begin{bmatrix}
    \mathsf{A} & \mathsf{0}   \\
    \mathsf{0} & \mathsf{A_2}
  \end{bmatrix}.
\end{equation}

Therefore, since the linear system in~\eqref{eqn:original} has a unique solution and $\widetilde{\mathsf{A}}$ is SPSD, $\ker(\widetilde{\mathsf{A}})\cap \ker(\mathsf{B}) =\{0\}\footnote{This algebraic condition can be seen as an implicit consequence of the \emph{discrete ellipticity on the kernel} condition stated in Proposition~\ref{prop:elliptic_kernel_discrete}.} $ and $\mathsf{A_\gamma}$ is SPD~\cite{Benzi2005}. An \emph{ideal}~\cite{ALprec} preconditioner for the augmented linear system is given by the block triangular matrix
\begin{equation}\label{eqn:precAL}
  \mathcal{P_{\gamma}} \coloneqq
  \begin{bmatrix}
    \mathsf{A_{\gamma}} & \mathsf{B^T}                \\
    0                   & \mathsf{-\frac{1}{\gamma}W}
  \end{bmatrix}.
\end{equation}
In practice, the action of $\mathcal{P}_{\gamma}^{-1}$ is given by
$$\mathcal{P}_{\gamma}^{-1}
  =
  \begin{bmatrix}
    \mathsf{{{A}_{\gamma}}^{-1}} & 0               \\
    0                            & \mathsf{I}_\ell
  \end{bmatrix}
  \begin{bmatrix}
    \mathsf{I}_{n+m} & \mathsf{B^T}     \\
    0                & \mathsf{-I}_\ell
  \end{bmatrix}
  \begin{bmatrix}
    \mathsf{I}_{n+m} & 0                      \\
    0                & \gamma \mathsf{W^{-1}}
  \end{bmatrix},
$$
where $\mathsf{I}_{n+m}$ and $\mathsf{I}_\ell$ are identity matrices of size $n+m$ and $l$, respectively. The last identity implies that the application of the preconditioner to a vector requires one solve with $\mathsf{W}$, and one solve with the augmented term $\mathsf{{A_\gamma}}$. The solve with $\mathsf{A_\gamma}$ does not need to be exact and can be approximated by an inner iteration with a loose tolerance (e.g., $10^{-2}$), which is usually sufficient to keep the number of outer iterations low when combined with (flexible) GMRES.
Nevertheless, this step can still be challenging due to the large kernel of $\mathsf{B^T W^{-1} B}$. Increasing $\gamma$ leads to a larger condition number of $\mathsf{A_\gamma}$, owing to the increasing weight of the positive semidefinite augmentation term $\mathsf{B^T W^{-1}B}$, which may adversely affect the convergence of iterative solvers. In particular, the asymptotic deterioration for large $\gamma$ is consistent with the result of Fortin and Glowinski~\cite{FortinAL}, who prove that the condition number of the augmented block grows asymptotically \emph{linearly} with the augmentation parameter. Conversely, for sufficiently small positive values of $\gamma$, the augmentation is too weak to effectively regularize the singular (2,2)-block, so the condition number remains large because of the singularity of $\widetilde{\mathsf{A}}$. Hence, the condition number of $\mathsf{A_\gamma}$ is expected to attain a minimum for a certain value of $\gamma$. This value, however, does not necessarily yield the best overall performance of the preconditioner if the spectrum of the preconditioned system is not sufficiently clustered. Consequently, the choice of $\gamma$ requires balancing the cost of the inner solves against the number of outer iterations. As we will see in Section~\ref{sec:spectral_ideal}, $\gamma$ appears to be essentially independent of the mesh size. Therefore, a suitable value can be determined inexpensively on small-scale problems and subsequently reused for large-scale computations without any additional tuning effort. Indeed, the numerical experiments reported in Section~\ref{subsec:ideal_validation} show that a single choice of $\gamma$ is consistently effective across all refinement levels.

Given the
promising results obtained in~\cite{BENZI2026118522} in the fictitious domain-type context, $\mathsf{W}$ is chosen as
\begin{equation}
  \mathsf{W} \coloneqq  \mathsf{M^2},
\end{equation}where $\mathsf{M}$ is the mass matrix on the multiplier space $\Lambda_h$ defined in Equation~\eqref{eqn:mass_matrix}. The reason for this choice will become clear in Section~\ref{sec:spectral_ideal}, where
we perform a spectral analysis of the ideal preconditioner~\eqref{eqn:precAL}. Specifically, setting $\mathsf{W} =  \mathsf{M^2}$ ensures that the eigenvalues of the preconditioned matrix remain bounded away from zero uniformly in $h_{\Omega}$, $h_{\Omega_2}$ and, in practice, also fairly insensitive with respect to the jump in the coefficients $\beta_2-\beta$ (see Theorem~\ref{thm:spectral_AL_independence_of_h}).

In the particular case when $V_{2,h} \equiv \Lambda_h$ (see Remark~\ref{rmk:spaces}), one has $\mathsf{C_2} = \mathsf{M}$. With this specific choice for $\mathsf{W}$, the augmented $(2,2)$-block reduces to the SPD matrix $\mathsf{A_2 + \gamma I}_m$. This avoids potential loss of sparsity and allows the direct assembly of this augmented term, without the need to perform additional sparse matrix-matrix products.

\begin{remark}[Case $V_{2,h} \not\equiv \Lambda_h$]\label{rmk:choice_W}
  In the general case $V_{2,h} \not\equiv \Lambda_h$, the augmented diagonal $(2,2)$-block $\mathsf{A_2+\gamma C_2^T W^{-1} C_2}$ is still positive
  definite even though it is a sum of two positive semi-definite matrices. This follows upon noticing that $\ker(\mathsf{A_2}) = \operatorname{span}\{\mathsf{1}\}$, but
  $\mathsf{1} \not \in \ker(\mathsf{C_2})$, as can be verified by explicit computation.\footnote{This follows from the definition of $\mathsf{C_2}$ in~\eqref{eqn:matrices}, together with the fact that $\{\phi_j\}_{j=1}^m$ and $\{\psi_j\}_{j=1}^\ell$ are basis sets.} Hence, $\ker(\mathsf{A_2}) \cap \ker(\mathsf{C_2}) = \{0\}$, which implies the positive definiteness of the whole sum. This allows the spectral analysis performed in Section~\ref{sec:spectral_modified} to be general, and not tailored to a specific choice of the finite element discretization.
\end{remark}

\section{Spectral analysis of ideal preconditioner}\label{sec:spectral_ideal}
In this Section, we derive lower and upper bounds for the eigenvalues of the preconditioned matrix $\mathcal{P}_{\gamma}^{-1} \mathcal{A}_{\gamma}$.  However, in general eigenvalues alone may not fully characterize the convergence of nonsymmetric matrix iterations like
  GMRES, especially when the problem is far from normal~\cite{GMRES_eigs}. In such cases, the field-of-values provides a more appropriate theoretical framework for deriving convergence estimates~\cite{FoVAL}. Nevertheless, practical experience suggests that convergence is often fast when the spectrum is real, positive and confined within a moderately
  sized interval bounded away from $0$. Since, in our setting, the spectrum is shown to possess these properties, we restrict our theoretical analysis to the eigenvalue distribution, leaving a field-of-values analysis as an interesting direction for future work.

Specifically, we will show that the eigenvalues of $\mathcal{P}_{\gamma}^{-1} \mathcal{A}_{\gamma}$ cluster towards $1$ as the augmentation parameter $\gamma$ increases, confirming the usual behavior of AL-based preconditioners. Then, we will address
mesh-independence of the smallest eigenvalue with respect to discretization parameters. We first recall the following result about generalized Rayleigh quotients.
\begin{lemma}\label{lemma:gen_theory}
  Let $\mathsf{Q}$ and $\mathsf{N}$ be symmetric and symmetric positive definite matrices, respectively, with generalized eigenvalues $\nu_1 \le \cdots \le \nu_n$ and eigenvectors $\mathsf{v}_1, \ldots, \mathsf{v}_n \in \mathbb{R}^n$, such that
  $\mathsf{Q} \mathsf{v}_i = \nu_i \mathsf{N} \mathsf{v}_i.$
  Then:
  \begin{itemize}
    \item The smallest eigenvalue $\nu_1$ can be characterized as
          \[
            \nu_1 = \min_{\mathsf{x} \neq 0} \frac{\mathsf{x^T} \mathsf{Q} \mathsf{x}}{\mathsf{x^T} \mathsf{N} \mathsf{x}},
            \quad \text{achieved when } \mathsf{x} = \pm \mathsf{v}_1.
          \]
    \item The second smallest eigenvalue $\nu_{2}$ satisfies
          \[
            \nu_{2} = \min_{\substack{\mathsf{x} \neq 0 \\ \mathsf{x^T} \mathsf{N} \mathsf{v}_1 = 0}}
            \frac{\mathsf{x^T} \mathsf{Q} \mathsf{x}}{\mathsf{x^T} \mathsf{N} \mathsf{x}},
            \quad \text{achieved when } \mathsf{x} = \pm \mathsf{v}_{2},
          \]
  \end{itemize}
  and so on.
\end{lemma}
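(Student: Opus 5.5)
The plan is to reduce the generalized problem to a standard symmetric eigenvalue problem through a Cholesky-type change of variables and then apply the classical Courant--Fischer (Rayleigh--Ritz) characterization.

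Since $\mathsf{N}$ is SPD, it has a unique SPD square root $\mathsf{N}^{1/2}$; alternatively one can use the Cholesky factor $\mathsf{N}=\mathsf{L}\mathsf{L^T}$. We set $\mathsf{y}=\mathsf{N}^{1/2}\mathsf{x}$ and define $\widehat{\mathsf{Q}}\coloneqq\mathsf{N}^{-1/2}\mathsf{Q}\mathsf{N}^{-1/2}$, which is symmetric. Then $\mathsf{Q}\mathsf{v}=\nu\mathsf{N}\mathsf{v}$ holds if and only if $\widehat{\mathsf{Q}}\mathsf{w}=\nu\mathsf{w}$ with $\mathsf{w}=\mathsf{N}^{1/2}\mathsf{v}$. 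This shows that the generalized eigenvalues are exactly the (real) eigenvalues of $\widehat{\mathsf{Q}}$, listed with the same multiplicities. Moreover,
\[
\frac{\mathsf{x^T}\mathsf{Q}\mathsf{x}}{\mathsf{x^T}\mathsf{N}\mathsf{x}}=\frac{\mathsf{y^T}\widehat{\mathsf{Q}}\mathsf{y}}{\mathsf{y^T}\mathsf{y}}.
\]

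By the spectral theorem, we choose an orthonormal eigenbasis $\mathsf{w}_1,\dots,\mathsf{w}_n$ of $\widehat{\mathsf{Q}}$ and set $\mathsf{v}_i=\mathsf{N}^{-1/2}\mathsf{w}_i$. These vectors are $\mathsf{N}$-orthonormal, i.e. $\mathsf{v}_i^{\mathsf{T}}\mathsf{N}\mathsf{v}_j=\delta_{ij}$. Expanding $\mathsf{y}=\sum_i c_i\mathsf{w}_i$ gives
\[
\frac{\mathsf{y^T}\widehat{\mathsf{Q}}\mathsf{y}}{\mathsf{y^T}\mathsf{y}}=\frac{\sum_i\nu_i c_i^2}{\sum_i c_i^2},
\]
a convex combination of the $\nu_i$. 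This quantity is therefore at least $\nu_1$, with equality at $\mathsf{y}=\pm\mathsf{w}_1$, that is, at $\mathsf{x}=\pm\mathsf{v}_1$ (or any eigenvector of $\nu_1$ if that eigenvalue is repeated).

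For $\nu_2$ we translate the constraint: $\mathsf{x^T}\mathsf{N}\mathsf{v}_1=\mathsf{y^T}\mathsf{w}_1=c_1$. Imposing this orthogonality forces $c_1=0$, so the quotient becomes a convex combination of $\nu_2,\dots,\nu_n$. It is therefore bounded below by $\nu_2$, with the bound attained at $\pm\mathsf{v}_2$. Inductively, imposing $\mathsf{N}$-orthogonality to $\mathsf{v}_1,\dots,\mathsf{v}_{k-1}$ yields $\nu_k$, which justifies the ``and so on''.

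The only delicate point is the choice of eigenvectors when eigenvalues are repeated. The statement should be read with the $\mathsf{v}_i$ chosen $\mathsf{N}$-orthonormal, which the construction above guarantees. Apart from this, the argument is routine.
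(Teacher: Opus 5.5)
Your proof is correct and complete. The substitution $\mathsf{y}=\mathsf{N}^{1/2}\mathsf{x}$, the $\mathsf{N}$-orthonormal eigenbasis and the convex-combination argument give both the values and their attainment, and your caveat about choosing the $\mathsf{v}_i$ $\mathsf{N}$-orthonormal when $\nu_1=\nu_2$ is what makes ``achieved at $\pm\mathsf{v}_2$'' meaningful. The paper gives no proof and simply recalls the lemma as classical. Its only justification is the remark after Theorem~\ref{thm:Courant-Fisher} that an SPD $\mathsf{N}$ reduces the pencil to a standard symmetric eigenproblem where Courant--Fischer applies, which is exactly your reduction.
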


\begin{theorem}[Spectrum of preconditioned matrix]\label{thm:lambda_bound}
  Assume that $\mathcal{A}_\gamma$ and $\mathcal{P}_{\gamma}$ are defined by the matrices in Equations~\eqref{eqn:ALmatrix_compact} and~\eqref{eqn:precAL}, respectively. The eigenvalues of
  the preconditioned matrix $\mathcal{P}_{\gamma}^{-1} \mathcal{A}_\gamma$ are all real and positive. More precisely, let $\widetilde{\mathsf{A}}$ be defined as in~\eqref{eqn:Atilde}
  and $(\mathsf{x; y})$ be an eigenvector of $\mathcal{P}_{\gamma}^{-1} \mathcal{A}_{\gamma}$. It holds
  $$\operatorname{Spec}(\mathcal{P}_{\gamma }^{-1} \mathcal{A}_{\gamma}) \subseteq \left [ \eta, 1 \right ], $$
  where
  $$ \eta \coloneqq \min_{\mathsf{x} \in \mathcal{S}}  \frac{\gamma \mathsf{x^T B^T W^{-1} B x}}{\mathsf{x^T} \widetilde{\mathsf{A}}\mathsf{ x} + \gamma \mathsf{x^T B^T W^{-1} B x} }  ,
  $$

  \vspace{0.2cm}
  \noindent
  and with $\nu = 1$ being an eigenvalue of algebraic multiplicity at least $n+m$. Here $\mathcal{S}$ denotes the set $\mathcal{S}\coloneqq{\{\mathsf{v} \in \mathbb{R}^{n+m} | \mathsf{u^T} \mathsf{\widetilde{A}}\mathsf{v} =0\>\> \text{for} \> \mathsf{u} \in \ker(\mathsf{B})\}}$.
\end{theorem}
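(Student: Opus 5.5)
I will write the generalized eigenvalue problem $\mathcal{A}_\gamma (\mathsf{x};\mathsf{y}) = \nu \mathcal{P}_\gamma(\mathsf{x};\mathsf{y})$ blockwise:
\begin{align*}
\mathsf{A_\gamma x} + \mathsf{B^T y} &= \nu(\mathsf{A_\gamma x} + \mathsf{B^T y}),\\
\mathsf{B x} &= -\tfrac{\nu}{\gamma}\mathsf{W y}.
\end{align*}
The first equation gives $(1-\nu)(\mathsf{A_\gamma x}+\mathsf{B^T y})=0$, so I will split into two cases.

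\emph{Case $\nu=1$.} Here the first block row is automatically satisfied. The second row only requires $\mathsf{y}=-\gamma \mathsf{W^{-1}Bx}$, so for every $\mathsf{x}\in\mathbb{R}^{n+m}$ the vector $(\mathsf{x};-\gamma\mathsf{W^{-1}Bx})$ is an eigenvector. This yields an $(n+m)$-dimensional eigenspace, hence geometric (and so algebraic) multiplicity at least $n+m$.

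\emph{Case $\nu\neq1$.} Then $\mathsf{A_\gamma x}=-\mathsf{B^T y}$. Since $\mathsf{A_\gamma}$ is SPD, we get $\mathsf{x}=-\mathsf{A_\gamma^{-1}B^T y}$, and necessarily $\mathsf{y}\neq0$ and $\mathsf{x}\neq0$. (If $\mathsf{x}=0$ then $\mathsf{B^Ty}=0$, so $\mathsf{y}=0$ by the full row rank of $\mathsf{B}$, which is inherited from $\mathsf{C}$.) Substituting into the second row gives $\mathsf{B A_\gamma^{-1} B^T y}=\frac{\nu}{\gamma}\mathsf{W y}$. This is a symmetric/SPD generalized eigenproblem, so $\nu$ is real. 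Moreover $\nu>0$, because $\mathsf{BA_\gamma^{-1}B^T}$ is SPD.

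For the upper bound I will use the identity $\mathsf{A_\gamma}=\widetilde{\mathsf{A}}+\gamma\mathsf{B^TW^{-1}B}$. Multiplying $\mathsf{A_\gamma x}=-\mathsf{B^Ty}$ by $\mathsf{x^T}$ and using $\mathsf{y}=-\frac{\gamma}{\nu}\mathsf{W^{-1}Bx}$ gives
\[
\mathsf{x^T}\widetilde{\mathsf{A}}\mathsf{x}+\gamma\,\mathsf{x^TB^TW^{-1}Bx}=\frac{\gamma}{\nu}\,\mathsf{x^TB^TW^{-1}Bx}.
\]
Note that $\mathsf{Bx}\ne0$ here, since otherwise $\mathsf{y}=0$. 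Hence
\[
\nu=\frac{\gamma\,\mathsf{x^TB^TW^{-1}Bx}}{\mathsf{x^T}\widetilde{\mathsf{A}}\mathsf{x}+\gamma\,\mathsf{x^TB^TW^{-1}Bx}}\in(0,1],
\]
because $\widetilde{\mathsf{A}}$ is SPSD.

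For the lower bound I need $\mathsf{x}\in\mathcal{S}$. Take $\mathsf{u}\in\ker\mathsf{B}$. Then $\mathsf{u^TA_\gamma x}=\mathsf{u^T}\widetilde{\mathsf{A}}\mathsf{x}$, since the augmentation term vanishes on $\ker\mathsf{B}$. On the other hand, $\mathsf{u^TA_\gamma x}=-\mathsf{u^TB^Ty}=-(\mathsf{Bu})^T\mathsf{y}=0$. So $\mathsf{x}$ is $\widetilde{\mathsf{A}}$-orthogonal to $\ker\mathsf{B}$, i.e.\ $\mathsf{x}\in\mathcal{S}$. Taking the minimum of the quotient over $\mathcal{S}$ gives $\nu\ge\eta$.

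The main subtlety is making $\eta$ meaningful. The quotient must stay away from $0$ on $\mathcal{S}\setminus\{0\}$, which fails only if some nonzero $\mathsf{x}\in\mathcal{S}$ has $\mathsf{Bx}=0$. For such an $\mathsf{x}$ we could choose $\mathsf{u}=\mathsf{x}$, giving $\mathsf{x^T}\widetilde{\mathsf{A}}\mathsf{x}=0$. Then $\mathsf{x}\in\ker\widetilde{\mathsf{A}}\cap\ker\mathsf{B}=\{0\}$, so the denominator never vanishes and $\eta>0$. The minimum is attained by homogeneity and compactness of the unit sphere in the subspace $\mathcal{S}$, in the spirit of Lemma~\ref{lemma:gen_theory}.
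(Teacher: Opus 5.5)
Your proof is correct and is essentially the paper's argument (same block equations, same $\nu=1$ eigenvectors $(\mathsf{x};-\gamma\mathsf{W^{-1}Bx})$, same Rayleigh-quotient formula for nonunit $\nu$). The local differences are that you get reality and positivity from the reduced problem $\mathsf{BA_{\gamma}^{-1}B^Ty}=\frac{\nu}{\gamma}\mathsf{Wy}$ rather than reading them off the quotient, and you verify $\mathsf{x}\in\mathcal{S}$ directly via $\mathsf{u^TA_{\gamma}x}=-(\mathsf{Bu})^{\mathsf{T}}\mathsf{y}=0$, where the paper simply appeals to Lemma~\ref{lemma:gen_theory}; two small touch-ups are that full row rank of $\mathsf{B}$ is best taken from the nonsingularity of $\mathcal{A}_\gamma$ (a $\mathsf{y}\neq0$ with $\mathsf{B^Ty}=0$ would give $\mathcal{A}_\gamma(0;\mathsf{y})=0$) rather than from $\mathsf{C}$, and that in your last paragraph what you show cannot vanish on $\mathcal{S}\setminus\{0\}$ is the numerator, not the denominator (which is $\mathsf{x^TA_{\gamma}x}>0$ anyway).
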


\vspace{0.1cm}
\begin{proof}

  Let $\nu \ne 0$ be an arbitrary eigenvalue of the preconditioned matrix, with a corresponding
  eigenvector $(\mathsf{x;y})$. Since both the original system matrix~\eqref{eqn:ALmatrix_compact} and the preconditioner~\eqref{eqn:precAL} are nonsingular, the preconditioned matrix is also nonsingular, and therefore $\nu=0$ cannot be an eigenvalue.

  Notice that $\mathsf{x}$ is
  actually a \emph{block} vector, which means that $\mathsf{x}=(\mathsf{x_1;x_2})$, with $\mathsf{x_1} \in \mathbb{R}^n$ and
  $\mathsf{x_2} \in \mathbb{R}^m$.
  The generalized eigenvalue problem can be stated as:
  \begin{equation}
    \begin{bmatrix}\label{eqn:eigenproblem}
      \mathsf{A_{\gamma}} & \mathsf{B^T} \\
      \mathsf{B}          & 0
    \end{bmatrix}
    \begin{bmatrix}
      {\mathsf x} \\
      {\mathsf y}
    \end{bmatrix}= \nu
    \begin{bmatrix}
      \mathsf{A_{\gamma}} & \mathsf{B^T}                 \\
      0                   & -\frac{1}{\gamma} \mathsf{W}
    \end{bmatrix}
    \begin{bmatrix}
      {\mathsf x} \\
      {\mathsf y}
    \end{bmatrix},
  \end{equation}
  which can be written explicitly as the following system of equations:
  \begin{align}
    \mathsf{A_{\gamma}x + B^T y } & = \nu \mathsf{(A_{\gamma}x + B^T y)}, \label{eqn:first_row_eigenproblem} \\
    \mathsf{Bx}                   & = \mathsf{-\frac{\nu}{\gamma} W y}.\label{eqn:second_row_eigenproblem}
  \end{align}
  Notice that $\mathsf{x} \ne 0$; otherwise, the positive definiteness of $\mathsf{W}$ implies $\mathsf{y=0}$, in
  contradiction with the fact that $(\mathsf{x, y})$ is an eigenvector. Therefore, we can assume $\mathsf{x} \ne 0$.

  It is evident from Equation~\eqref{eqn:first_row_eigenproblem} that $\nu=1$ is an eigenvalue of $\mathcal{P}_{\gamma}^{-1} \mathcal{A}_{\gamma}$, with
  associated eigenvector $(\mathsf{x;-\gamma W^{-1}Bx})$ when $\mathsf{x} \not \in \ker(\mathsf{B})$. Moreover, when $\mathsf{x} \in \ker(\mathsf{B})$, we
  have $(\mathsf{x;0})$ as associated eigenvector.

  We now assume $\nu \ne 1$. From~\eqref{eqn:first_row_eigenproblem} we derive
  \begin{equation} \label{eqn:only_x}
    \mathsf{A_{\gamma}} \mathsf x + \mathsf{B^T} \mathsf y  = 0,
  \end{equation}
  whereas from~\eqref{eqn:second_row_eigenproblem} we obtain $\mathsf{y=-\frac{\gamma}{\nu}W^{-1} Bx}$, which plugged into the previous equation
  gives
  \begin{equation}\label{eqn:first_row_mod}
    \mathsf{A_{\gamma}x - \frac{\gamma}{\nu} B^T W^{-1} B x = 0}.
  \end{equation}
  Multiplying both sides of~\eqref{eqn:first_row_mod} by $\nu \mathsf x^\ast$, we get
  \begin{equation}\label{ref:eq_for_lambda}
    \nu \mathsf{x^*}\mathsf{A_{\gamma}x - \gamma \mathsf{x^*} B^T W^{-1} B x = 0},
  \end{equation}
  and using~\eqref{eqn:A_comp}, we obtain the following explicit equation for $\nu$
  \begin{equation}\label{eqn:lambda}
    \nu  =  \frac{\gamma \mathsf{x^\ast B^T W^{-1} B x}}{\mathsf{x^\ast} \widetilde{\mathsf{A}} \mathsf{ x} + \gamma {\mathsf{x^\ast B^T W^{-1} B x}}}.
  \end{equation}The positive definiteness of $\mathsf{A_\gamma}$ and the positive semidefiniteness of $\mathsf{B^TW^{-1}B}$ imply that all the eigenvalues are positive and real\footnote{Hence, the corresponding eigenvector can also be chosen to be real: for this reason $\mathsf{x}^\ast$ will be replaced by $\mathsf{x^T}$.}. Moreover, note that in this case $\mathsf{x}$ cannot belong to $\ker(\widetilde{\mathsf{A}})$, since we are assuming $\nu \neq 1$, nor to $\ker(\mathsf{B})$ since $\nu \ne 0$.

  From~\eqref{eqn:lambda} we also deduce that $\nu < 1$, and that all eigenvalues satisfying~\eqref{eqn:lambda} cluster towards $1$ as $\gamma \rightarrow +\infty$, consistently with the theory of
  classical AL-based preconditioners~\cite{ALprec}. We know that all nonunit eigenvalues of $\mathcal{P}^{-1}_{ \gamma} \mathcal{A}_{\gamma}$ satisfy~\eqref{ref:eq_for_lambda}, hence the smallest eigenvalue can be characterized using Lemma~\ref{lemma:gen_theory}:
  $$ \eta \coloneqq \min_{\mathsf{x} \in \mathcal{S}}  \frac{\gamma \mathsf{x^T B^T W^{-1} B x}}{\mathsf{x^T} \widetilde{\mathsf{A}}\mathsf{ x} + \gamma \mathsf{x^T B^T W^{-1} B x} },
  $$which concludes the proof.
\end{proof}

\subsection{Numerical test: spectrum of preconditioned system}\label{sec:numerical_validation}

We perform some preliminary numerical experiments to illustrate the impact of the preconditioner $\mathcal{P}_{\gamma}$
on the spectrum of the original augmented system in~\eqref{eqn:ALmatrix}. In particular, we examine how the distribution of the eigenvalues of the preconditioned matrix changes when varying $\gamma$ and the magnitude of the coefficient jump $\beta_2 - \beta$. We will use $\mathcal{Q}^1$ Lagrangian elements for all the finite element spaces involved in the formulation. We consider the following
geometric configuration:
\begin{itemize}
  \item $\Omega \! = \! [0,1]^2$,

        \vspace{0.1cm}
  \item $\Omega_2 \!=\! [0.2,0.5]^2$,

        \vspace{0.1cm}
  \item $\beta = 1$,
\end{itemize}
and a discretization for which $\mathsf{A} \in \mathbb{R}^{1089 \times 1089}$, $\mathsf{A_2} \in \mathbb{R}^{81 \times 81}$, $\mathsf{C_2} \in \mathbb{R}^{81 \times 81}$, and $\mathsf{C} \in \mathbb{R}^{81 \times 1089}$. The global
system has then size $1251$. Our choice for $\mathsf{W}$ is $\mathsf{M}^{2}$. Eigenvalues have been computed using the \texttt{eig} function from \textsc{Matlab}.

We report in Figure~\ref{fig:ideal_spectrum_beta_100} the spectrum of the unpreconditioned matrix (top row, in magenta) and preconditioned matrix (bottom row, in green)  for increasing
values of the augmentation parameter $\gamma$ and a fixed value $\beta_2=100$. We note that the unpreconditioned system is not singular, but shows some
small negative eigenvalues clustered near the origin, which arise from the indefinite nature of the saddle point system and do not contradict the clustering result in Theorem~\ref{thm:lambda_bound}, which applies to the preconditioned matrix $\mathcal{P}_{\gamma}^{-1}\mathcal{A}_{\gamma}$. Several observations
  can already be drawn from this figure. First, the computed eigenvalues of $\mathcal{P}_{\gamma}^{-1}\mathcal{A}_{\gamma}$ are all positive and real, thus confirming the first part of Theorem~\ref{thm:lambda_bound}. Moreover, as expected, higher values of the
augmentation parameter $\gamma$ result in a wider spectrum for the original system $\mathcal{A}_\gamma$. The bottom row shows the nice effect of the preconditioner $\mathcal{P}_{\gamma}$, which clusters the spectrum near 1.

We perform the same test, setting this time $\beta_2 =10^6$. The numerical findings are shown in Figure~\ref{fig:ideal_spectrum_beta_1e6}. We observe
the spectrum of the original matrix to scale with the size of the jump, as can be seen from the different x-axis scales in the top rows of Figures~\ref{fig:ideal_spectrum_beta_100} and~\ref{fig:ideal_spectrum_beta_1e6}. In contrast, the spectrum of the preconditioned matrix
is essentially identical to the one in the bottom row of the previous figure, indicating that the preconditioner
$\mathcal{P}_{\gamma}$ remains quite effective in clustering the eigenvalues also for higher contrast in the coefficients.

\begin{figure}[h]
  \centering
  \begin{subfigure}{0.3\textwidth} %
    \centering
    \includegraphics{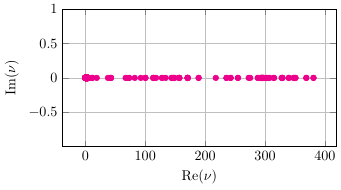}
  \end{subfigure}\hspace{1.1cm} %
  \begin{subfigure}{0.3\textwidth}
    \centering
    \includegraphics{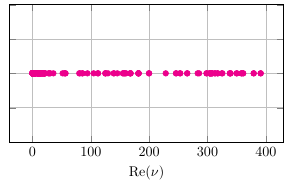}
  \end{subfigure}\hspace{0.05cm} %
  \begin{subfigure}{0.3\textwidth}
    \centering
    \includegraphics{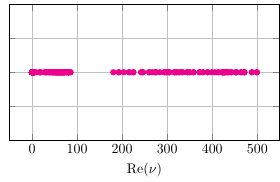}
  \end{subfigure}

  \begin{subfigure}{0.3\textwidth} %
    \centering
    \includegraphics{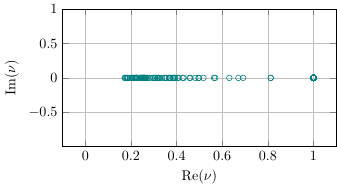}
    \caption*{\hspace{2.0cm} $\gamma\! =\! 1$}
  \end{subfigure}\hspace{1.1cm} %
  \begin{subfigure}{0.3\textwidth}
    \centering
    \includegraphics{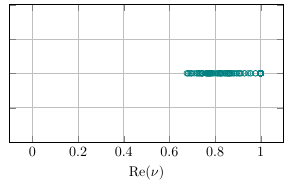}
    \caption*{$\gamma\! =\! 10$}
  \end{subfigure}\hspace{0.05cm} %
  \begin{subfigure}{0.3\textwidth}
    \centering
    \includegraphics{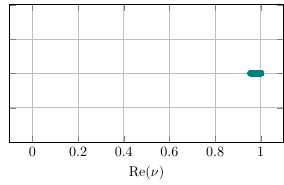}
    \caption*{$\gamma\! =\! 100$}
  \end{subfigure}
  \vspace{0.1cm}
  \caption{$\beta=1$, $\beta_2 = 100$. Spectrum of the original system matrix $\mathcal{A}_{\gamma}$ (top row, in magenta) and $\mathcal{P}_{\gamma}^{-1}\mathcal{A}_{\gamma}$ (bottom row, in green) for
    increasing values of the augmentation parameter $\gamma$.}
  \label{fig:ideal_spectrum_beta_100}
\end{figure}

\begin{figure}[h]
  \centering
  \begin{subfigure}{0.3\textwidth} %
    \centering
    \includegraphics{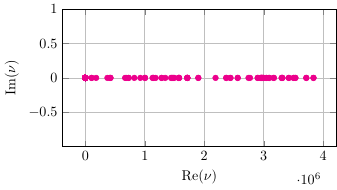}
  \end{subfigure}\hspace{1.1cm} %
  \begin{subfigure}{0.3\textwidth}
    \centering
    \includegraphics{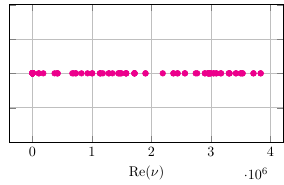}
  \end{subfigure}\hspace{0.05cm} %
  \begin{subfigure}{0.3\textwidth}
    \centering
    \includegraphics{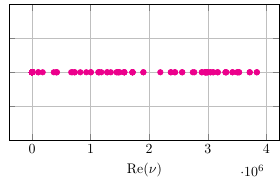}
  \end{subfigure}

  \vspace{0.2cm} %

  \begin{subfigure}{0.3\textwidth} %
    \centering
    \includegraphics{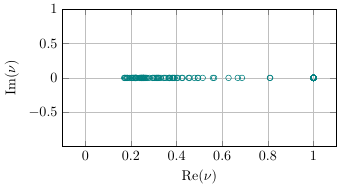}
    \caption*{\hspace{2.0cm} $\gamma\! =\! 1$}
  \end{subfigure}\hspace{1.1cm} %
  \begin{subfigure}{0.3\textwidth}
    \centering
    \includegraphics{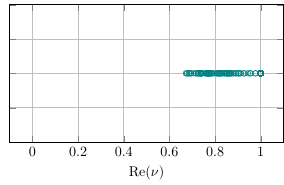}
    \caption*{$\gamma\! =\! 10$}
  \end{subfigure}\hspace{0.05cm} %
  \begin{subfigure}{0.3\textwidth}
    \centering
    \includegraphics{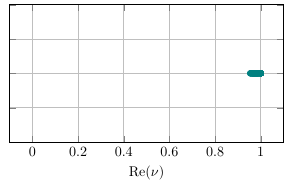}
    \caption*{$\gamma\! =\! 100$}
  \end{subfigure}
  \vspace{0.1cm}
  \caption{$\beta=1$, $\beta_2 = 10^6$. Spectrum of the original system matrix $\mathcal{A}_{\gamma}$ (top row, in magenta) and $\mathcal{P}_{\gamma}^{-1}\mathcal{A}_{\gamma}$ (bottom row, in green) for
    increasing values of the augmentation parameter $\gamma$. Notice the different scale in the x-axis of the first row.}
  \label{fig:ideal_spectrum_beta_1e6}
\end{figure}

\FloatBarrier
\subsection{Mesh-independence of lower bound $\eta$}
Concerning the choice $\mathsf{W}\!=\!\mathsf{M^2}$, we will show that this selection is motivated by its property of ensuring that the lower bound $\eta$ in Theorem~\ref{thm:lambda_bound} remains
uniformly bounded away from zero with respect to mesh sizes $h_\Omega$, $h_{\Omega_2}$, with dependence on the problem coefficients that will be made explicit. We stress that this only requires inf-sup stable spaces.

Before addressing the proof, we collect some useful technical results that will be used in the sequel.
\begin{lemma}[Spectral equivalence with $h$-scaled mass matrix]
  \label{lemma:spectral_equivalence_scaled_mass}
  Let $d=2$, $\mathsf{M}$ be the mass matrix defined on the multiplier space introduced in Section~\ref{sec:method}, and let $h_{\Omega_2}$ denote the mesh size of $\mathcal{T}_{2,h}$. Then, the
  matrices $h_{\Omega_2}^{-2}\mathsf{M^{-1}}$ and $\mathsf{M^{-2}}$ are
  spectrally equivalent, i.e. there exist constants $c_1, C_2$, independent of the discretization parameters, such that
  \begin{equation}\label{eqn:spectral_equivalence_scaled_mass}
    0 < c_1 \leq \frac{\mathsf{w^T M^{-2} w}}{\mathsf{w^T} h_{\Omega_2}^{-2} \mathsf{M^{-1} w}} \leq C_2, \quad \forall \mathsf{w} \in \mathbb{R}^\ell , \mathsf{w} \ne 0.
  \end{equation}
  Considerations about the case $d=3$ will be discussed later.

\end{lemma}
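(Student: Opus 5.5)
The proof rests on the standard spectral bounds for the mass matrix on a quasi-uniform mesh in dimension $d=2$. Since $\mathcal{T}_{2,h}$ is assumed quasi-uniform and shape-regular, and $\Lambda_h$ is a standard Lagrangian (or discontinuous polynomial) space with a local nodal basis, there are constants $\underline{c},\overline{c}>0$, independent of $h_{\Omega_2}$, such that
\begin{equation*}
  \underline{c}\, h_{\Omega_2}^{2}\, \mathsf{w^T w} \;\le\; \mathsf{w^T M w} \;\le\; \overline{c}\, h_{\Omega_2}^{2}\, \mathsf{w^T w}, \qquad \forall \mathsf{w}\in\mathbb{R}^\ell .
\end{equation*}
I would recall the derivation only briefly. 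First, assemble $\mathsf{M}$ element by element. Second, on each element $K$, map to the reference element and use the equivalence of norms on the finite-dimensional space $\mathcal{Q}^p(\hat K)$; this gives $\mathsf{w}_K^T \mathsf{M}_K \mathsf{w}_K \simeq |K|\,|\mathsf{w}_K|^2$ with $|K|\simeq h_{\Omega_2}^2$. Third, sum over elements and use the fact that each degree of freedom belongs to a uniformly bounded number of elements, which follows from shape regularity. In other words, $\operatorname{Spec}(\mathsf{M})\subset[\underline{c}h_{\Omega_2}^2,\overline{c}h_{\Omega_2}^2]$.

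Next, I would reduce the Rayleigh quotient in \eqref{eqn:spectral_equivalence_scaled_mass} to a statement about the eigenvalues of $\mathsf{M}$. Since $\mathsf{M}$ is SPD, write $\mathsf{M}=\mathsf{Q\Sigma Q^T}$ with $\mathsf{Q}$ orthogonal and $\mathsf{\Sigma}=\operatorname{diag}(\sigma_i)$. Both $\mathsf{M^{-2}}$ and $\mathsf{M^{-1}}$ are diagonalized by $\mathsf{Q}$. Setting $\mathsf{z}=\mathsf{Q^T w}$, the quotient becomes
\begin{equation*}
  \frac{\sum_i \sigma_i^{-2} z_i^2}{h_{\Omega_2}^{-2}\sum_i \sigma_i^{-1} z_i^2}.
\end{equation*}
This is a weighted average of the numbers $h_{\Omega_2}^2\sigma_i^{-1}$, with weights $\sigma_i^{-1}z_i^2\ge 0$. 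It therefore lies between $\min_i h_{\Omega_2}^2/\sigma_i$ and $\max_i h_{\Omega_2}^2/\sigma_i$. By the mass-matrix bounds above, these extremes are bounded by $1/\overline{c}$ and $1/\underline{c}$. This gives the lemma with $c_1=\overline{c}^{-1}$ and $C_2=\underline{c}^{-1}$.

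The only genuinely nontrivial ingredient is the uniform two-sided bound on $\operatorname{Spec}(\mathsf{M})$, and in particular the lower bound. The lower bound needs quasi-uniformity: local shape regularity alone would give $\min_K h_K^2$ versus $\max_K h_K^2$, and these are comparable only on quasi-uniform meshes. I would state this bound as a classical result, for instance Wathen's element-wise eigenvalue bound for mass matrices, and not reprove it.

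For $d=3$ the same argument gives $h_{\Omega_2}^{3}$ in place of $h_{\Omega_2}^{2}$. The matching scaling would then be $h_{\Omega_2}^{-3}\mathsf{M^{-1}}$, which explains why the statement singles out $d=2$.
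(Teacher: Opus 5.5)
Your proof is correct and follows essentially the paper's route: both arguments reduce the claim to the quasi-uniform mass-matrix bound $\underline{c}\,h_{\Omega_2}^2 \le \mathsf{w^T M w}/\mathsf{w^T w} \le \overline{c}\,h_{\Omega_2}^2$, which both cite as a classical result. The only difference is in the reduction step. The paper multiplies and divides by $\mathsf{w^T w}$ and bounds the two resulting Rayleigh quotients separately, which gives $c_1=\underline{c}/\overline{c}^{2}$ and $C_2=\overline{c}/\underline{c}^{2}$ in your notation; your common-eigenbasis weighted-average argument uses that both quotients are evaluated at the same vector, and so yields the tighter constants $c_1=\overline{c}^{-1}$, $C_2=\underline{c}^{-1}$.
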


\begin{proof}
  Let $0 \ne \mathsf{w} \in \mathbb{R}^\ell$ be an arbitrary vector. Recall that two families of SPD matrices $\{\mathsf{Q}_\ell\}$ and $\{\mathsf{N}_\ell\}$ (parametrized by their dimension $\ell$) are said to be \textit{spectrally equivalent} if there exist $\ell$-independent constants $c_1$ and $C_2$ with
  $$0<c_1 \le \frac{ \mathsf{w^T} \mathsf{Q}_\ell \> \mathsf w}{\mathsf{w^T} \mathsf{N}_\ell \> \mathsf w} \le C_2.$$
  We drop the subscript $\ell$. We first multiply and then divide the generalized Rayleigh quotient in~\eqref{eqn:spectral_equivalence_scaled_mass} by $\mathsf{w^T w}$, so that it can be rewritten in terms of the two Rayleigh quotients associated with the matrices $(h_{\Omega_2}^2 \mathsf{M})^{-1}$ and $\mathsf{M^{-2}} $.

  For a general Hermitian matrix $\mathsf{H}$, its Rayleigh quotient lies in the interval $[\nu_{\text{min}}(\mathsf{H}), \nu_{\text{max}}(\mathsf{H})]$. Hence, we get
  \begin{equation}\label{eqn:intermediate_step}
    \frac{\nu_{\text{min}}(h_{\Omega_2}^2 \mathsf{M})}{\nu_{\text{max}}(\mathsf{M^2})} \leq \frac{\mathsf{w^T M^{-2} w}}{\mathsf{w^T w}} \frac{\mathsf{w^T w}}{\mathsf{w^T} h_{\Omega_2}^{-2} \mathsf{M^{-1} w}} \leq \frac{\nu_{\text{max}}(h_{\Omega_2}^2 \mathsf{M})}{\nu_{\text{min}}(\mathsf{M^2})}.
  \end{equation}

  \vspace{0.1cm}
  Since the immersed mesh $\mathcal{T}_{2,h}$ is assumed to be discretized in a quasi-uniform fashion, the following bound (see e.g.~\cite{Ern_Guermond-FEM-2004}) on the eigenvalues of the mass matrix holds
  \[
    c h_{\Omega_2}^2 \leq \frac{\mathsf{w^T M w}}{\mathsf{w^T w}} \leq C h_{\Omega_2}^2 \quad \forall \mathsf{w} \in \mathbb{R}^\ell ,
  \]for some positive constants $c$ and $C$ independent of the discretization parameters. Using this in~\eqref{eqn:intermediate_step} and noting that $\nu (h_{\Omega_2}^2 \mathsf{M}) = h_{\Omega_2}^2 \nu ( \mathsf{M})  $ and $\nu (\mathsf{M}^2) =(\nu ( \mathsf{M}))^2$ yields
  \[
    0 < \frac{c}{C^2} \leq \frac{\mathsf{w^T M^{-2} w}}{\mathsf{w^T} h_{\Omega_2}^{-2} \mathsf{M^{-1} w}} \leq \frac{C}{c^2}, \quad \forall \mathsf{w} \in \mathbb{R}^\ell ,
  \]which concludes the proof upon setting $c_1 = \frac{c}{C^2}$ and $C_2 = \frac{C}{c^2}$.
\end{proof}

The following result is based on the more general framework presented in~\cite{LiRenCang} for computing the eigenvalues and eigenvectors of Hermitian matrix pencils $\mathsf{Q} - \nu \mathsf{N}$, where $\mathsf{N}$ may be indefinite and even singular. For clarity of presentation, we state it in Theorem~\ref{thm:Courant-Fisher} in a simplified form, since in our problem we work exclusively with symmetric positive definite and symmetric positive semidefinite matrices.

We refer to $\mathsf{Q} - \nu \mathsf{N}$ as a \textit{positive semidefinite symmetric} pencil if both $\mathsf{Q}$ and $\mathsf{N}$ are symmetric and there exists a real scalar $\nu_0$ such that $\mathsf{Q} - \nu_0 \mathsf{N}$ is positive semidefinite. Moreover, $\mathsf{Q} - \nu \mathsf{N}$ has $r=\text{rank}(\mathsf{N})$ finite eigenvalues, all of which are real. In the algebraic theory it is customary to say that the  remaining eigenvalues are \textit{infinite} and their corresponding eigenvectors belong to the kernel of $\mathsf{N}$.
\begin{theorem}\label{thm:Courant-Fisher}
  Let $\mathsf{Q} - \nu \mathsf{N}$ be a positive semidefinite symmetric pencil, with $\mathsf{Q}$, $\mathsf{N}$ $\in \mathbb{R}^{n \times n}$. Let $\nu_i$ be the finite eigenvalues of $\mathsf{Q} - \nu \mathsf{N}$ arranged in the order $\nu_1 \leq \nu_2 \leq \cdots \leq \nu_r $. Then
  \[
    \nu_i =
    \inf_{\substack{\mathcal{X} \\ \dim \mathcal{X} = i}}
    \ \sup_{\substack{\mathsf{x} \in \mathcal{X} \\ \mathsf{x^T N x} \neq 0}}
    \frac{\mathsf{x^T Q x}}{\mathsf{x^T N x}},
  \]
  where $\mathcal{X}$ is a subspace of $\mathbb{R}^n$.
\end{theorem}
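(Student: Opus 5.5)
The plan is to reduce the statement to the classical Courant--Fischer theorem for a definite pencil, in three steps: a spectral shift, an inversion of the roles of $\mathsf{Q}$ and $\mathsf{N}$, and a perturbation argument to remove the remaining degeneracy. Throughout I assume, as in our application, that $\mathsf{N}$ is symmetric positive semidefinite, so $\mathsf{x^T N x}\neq 0$ means $\mathsf{x}\notin\ker(\mathsf{N})$.

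\textbf{Step 1 (shift).} Let $\nu_0$ be such that $\mathsf{Q}_0\coloneqq\mathsf{Q}-\nu_0\mathsf{N}$ is positive semidefinite. Then $\mathsf{Q}\mathsf{v}=\nu\mathsf{N}\mathsf{v}$ holds if and only if $\mathsf{Q}_0\mathsf{v}=(\nu-\nu_0)\mathsf{N}\mathsf{v}$. For every $\mathsf{x}$ with $\mathsf{x^T N x}\neq0$ we also have
\[
\frac{\mathsf{x^T Q x}}{\mathsf{x^T N x}}=\nu_0+\frac{\mathsf{x^T Q_0 x}}{\mathsf{x^T N x}}.
\]
Both sides of the claimed identity therefore shift by exactly $\nu_0$, and it suffices to prove the theorem with $\mathsf{Q}$ replaced by $\mathsf{Q}_0$. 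In other words, we may assume $\mathsf{Q}$ is positive semidefinite.

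\textbf{Step 2 (definite case).} Assume first that $\mathsf{Q}$ is positive definite, and consider the reversed pencil $\mathsf{N}\mathsf{v}=\mu\mathsf{Q}\mathsf{v}$.
\begin{itemize}
\item Since $\mathsf{Q}$ is SPD, this is a standard symmetric-definite problem. It has $n$ real eigenvalues $\mu_1\ge\cdots\ge\mu_n\ge0$, exactly $r=\operatorname{rank}(\mathsf{N})$ of which are positive.
\item The positive $\mu_i$ correspond bijectively to the finite eigenvalues $\nu_i=1/\mu_i$ of the original pencil. The zero ones correspond to the kernel of $\mathsf{N}$, i.e.\ to the infinite eigenvalues.
\item The classical max--min form of Courant--Fischer (equivalently, Lemma~\ref{lemma:gen_theory} iterated) gives
\[
\mu_i=\sup_{\dim\mathcal{X}=i}\ \inf_{0\ne\mathsf{x}\in\mathcal{X}}\frac{\mathsf{x^T N x}}{\mathsf{x^T Q x}}.
\]
\item For $i\le r$ the optimal subspace $\operatorname{span}\{\mathsf{v}_1,\dots,\mathsf{v}_i\}$ meets $\ker(\mathsf{N})$ only at $0$. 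Any subspace that does contain a nonzero kernel vector has infimum $0<\mu_i$, so it is irrelevant to the supremum.
\end{itemize}
Taking reciprocals turns sup--inf into inf--sup:
\[
\nu_i=\inf_{\dim\mathcal{X}=i}\ \sup_{\mathsf{x}\in\mathcal{X},\ \mathsf{x^T N x}\ne0}\frac{\mathsf{x^T Q x}}{\mathsf{x^T N x}}.
\]
This is consistent on the bad subspaces. If $\mathcal{X}$ contains some $\mathsf{z}\in\ker(\mathsf{N})\setminus\{0\}$, then along $\mathsf{x}=\mathsf{z}+t\mathsf{w}$ with $\mathsf{w}\in\mathcal{X}$, $\mathsf{w}^{\mathsf{T}}\mathsf{N}\mathsf{w}\ne0$ and $t\to0$, the quotient blows up because $\mathsf{z^T Q z}>0$. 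So such $\mathcal{X}$ give a supremum of $+\infty$ and do not affect the infimum.

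\textbf{Step 3 (singular $\mathsf{Q}$).} If $\mathsf{Q}$ is only semidefinite, I would apply Step 2 to $\mathsf{Q}_\varepsilon=\mathsf{Q}+\varepsilon\mathsf{I}$ and let $\varepsilon\to0^+$. Both sides should be continuous in $\varepsilon$:
\begin{itemize}
\item The finite eigenvalues of $(\mathsf{Q}_\varepsilon,\mathsf{N})$ are the eigenvalues of the compression of the pencil to a complement of $\ker(\mathsf{N})$, after eliminating the kernel component, which depends continuously on $\varepsilon$.
\item The inf--sup expression is monotone in $\varepsilon$ and bounded below.
\end{itemize}

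The main obstacle is this degenerate case, in particular when $\ker(\mathsf{Q})\cap\ker(\mathsf{N})\ne\{0\}$: the pencil is then singular and elimination of the kernel component breaks down. I would first try to handle it directly by quotienting out the common kernel, on which both forms vanish identically and which therefore does not affect the Rayleigh quotients. On the remaining space I would compute the Schur complement of $\mathsf{Q}$ with respect to the $\ker(\mathsf{N})$ block, which reduces the problem to an $r\times r$ definite pencil. For our application this case does not arise anyway, since the pencils considered there have $\mathsf{Q}$ SPD.
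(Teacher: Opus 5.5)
\textbf{There is a genuine gap in Step 3, and the degenerate case you single out cannot be proved at all.}

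Monotonicity in $\varepsilon$ and a lower bound only show that the inf--sup for $\mathsf{Q}+\varepsilon\mathsf{I}$ has a limit as $\varepsilon\to0^+$, and that this limit dominates the value at $\varepsilon=0$. They give no reverse inequality. Continuity at $\varepsilon=0$ does fail in general when $\ker(\mathsf{Q})\cap\ker(\mathsf{N})\neq\{0\}$. In that case the identity itself is false, so neither the quotient idea nor the Schur-complement idea can rescue it.

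Take $\mathsf{Q}=\operatorname{diag}(1,2,0)$ and $\mathsf{N}=\operatorname{diag}(1,1,0)$. Quotienting out $\mathsf{e}_3$ leaves the finite eigenvalues $1$ and $2$. However, $\mathcal{X}=\operatorname{span}\{\mathsf{e}_1,\mathsf{e}_3\}$ has supremum $1$, so the inf--sup with $i=2$ equals $1$. For $\mathsf{Q}+\varepsilon\mathsf{I}$ the same inf--sup equals $2+\varepsilon$. An $i$-dimensional subspace that meets the common kernel behaves like a lower-dimensional one, and this shifts the index.

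The statement therefore tacitly assumes the pencil is regular, i.e.\ $\ker(\mathsf{Q})\cap\ker(\mathsf{N})=\{0\}$. For a singular pencil, ``$r$ finite eigenvalues'' is not even well defined. Under regularity no limiting argument is needed. For any $\nu_0'<\nu_0$, write $\mathsf{Q}-\nu_0'\mathsf{N}=(\mathsf{Q}-\nu_0\mathsf{N})+(\nu_0-\nu_0')\mathsf{N}$. This is a sum of positive semidefinite matrices whose kernel is $\ker(\mathsf{Q})\cap\ker(\mathsf{N})=\{0\}$. So shifting a bit further puts you directly in Step 2.

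Your justification for the application is also inaccurate. There $\mathsf{Q}=\mathsf{B^T M^{-2} B}$ is singular, with kernel $\ker(\mathsf{B})$; it is not SPD. What makes things work is $\ker(\mathsf{B})\cap\ker(\widetilde{\mathsf{A}})=\{0\}$. Hence $\mathsf{B^T M^{-2} B}+\widetilde{\mathsf{A}}$ is SPD, and Step 2 applies with $\nu_0=-1$.

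With Step 3 replaced by this remark, your argument is a correct, self-contained reduction to the classical Courant--Fischer theorem. The paper gives no proof of its own; it quotes the result in simplified form from the theory of positive semidefinite pencils in \cite{LiRenCang}. Two points still need to be made explicit.

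First, when $i\le n-r$ there are $i$-dimensional subspaces $\mathcal{X}\subseteq\ker(\mathsf{N})$, for which the inner supremum runs over the empty set. Your blow-up argument needs some $\mathsf{w}\in\mathcal{X}$ with $\mathsf{w^T N w}\neq0$, so it does not cover them. You must adopt $\sup\emptyset=+\infty$, as the paper implicitly does when it says kernel vectors ``formally correspond to an infinite eigenvalue''. With the usual convention the right-hand side would be $-\infty$.

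Second, $\mathsf{N}\succeq 0$ is a genuine hypothesis, not a convenience. For $\mathsf{Q}=\mathsf{I}$ and $\mathsf{N}=\operatorname{diag}(1,-1)$, the pencil is positive semidefinite and regular with finite eigenvalues $\pm1$. Yet the inf--sup over $\mathsf{x^T N x}\neq0$ is $-\infty$ for $i=1$. For indefinite $\mathsf{N}$, the correct formulation uses $\mathsf{x^T N x}>0$ and only the eigenvalues of positive type.
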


\vspace{0.3cm}
This theorem demonstrates that the presence of infinite eigenvalues does not affect the minimax formulas (see also~\cite{NAJMAN1993183}). Note that, when $\mathsf{N}$ is also positive definite, the associated eigenvalue problem is equivalent to a standard symmetric eigenvalue problem, and therefore the Courant-Fischer minimax principles continue to hold.

Finally, we define the following norms:
\begin{equation}\label{eq:norm1}
  |v_h|_{\mathsf{A},\Omega} \coloneqq  (\mathsf{v_1^T} \mathsf{A}  \mathsf{v_1})  ^{1/2},
\end{equation}
\begin{equation}\label{eq:norm2}
  |v_{2,h}|_{\mathsf{A_2},\Omega_2} \coloneqq  (\mathsf{v_2^T} \mathsf{A_2}  \mathsf{v_2})  ^{1/2},
\end{equation}
\begin{equation}\label{eq:norm3}
  \|\mu_h\|_{0,\Omega_2} \coloneqq ( \mathsf{\upmu^T} \mathsf{M}  \mathsf{\upmu} ) ^{1/2},
\end{equation}
where $\mathsf v_1$, $\mathsf v_2$, and $\mathsf{\upmu}$ are vectors of the coefficients associated with the basis sets $\{\varphi_i\}_{i=1}^n$, $\{\phi_j\}_{j=1}^m$, and $\{\psi_k\}_{k=1}^\ell $. The matrices $\mathsf{A}$, $\mathsf{A_2}$, and $\mathsf{M}$ are
defined in Section~\ref{subsec:fem}.

The following Lemma provides an algebraic interpretation of the discrete inf-sup stability conditions presented
in Section~\ref{sec:method}, and is based on the seminal work by Malkus~\cite{malkus}, which gives a specific link between
the discrete inf-sup constant and the eigenvalues of a generalized eigenvalue problem. The next proof will follow similar arguments to the ones in~\cite[Lemma 3]{BENZI2026118522}.
\begin{lemma}[Algebraic inf-sup condition]
  \label{lemma:algebraic_infsup}
  Let $\mathsf{A}, \mathsf{A_2}$, $\mathsf{B}$ and $\mathsf{M}$ be the matrices defined in previous sections, and assume that the discrete inf-sup condition
  in Proposition~\ref{prop:inf_sup_discrete} is satisfied. Moreover, let $\tilde{\mathsf{{A}}}$ be defined as in~\eqref{eqn:Atilde}, and $\widetilde{\mathsf{M}}_2= \begin{bmatrix}
      0 & 0            \\
      0 & \mathsf{M_2}
    \end{bmatrix}$, being $\mathsf{M_2}$ the mass matrix defined on $V_{2,h}$\footnote{In principle, $\mathsf{M_2}$ might not coincide with the mass matrix $\mathsf{M}$ defined on the multiplier space $\Lambda_h$, unless $V_{2,h}\equiv\Lambda_h$.}. Then, there exists a positive constant $\bar \theta $ independent of the mesh sizes $h_{\Omega}$ and $h_{\Omega_2}$ (but depending on coefficients $\beta$ and $\beta_2$) such that
  \begin{equation}
    \bar \theta^2 \le \min_{\mathcal{S}\setminus \ker(\widetilde{\mathsf{{A}}})} \frac{\mathsf{v^T} \mathsf{B^T} \mathsf{M^{-2}}\mathsf{B}  \mathsf v}{\mathsf{v^T} \widetilde{\mathsf{{A}}} \mathsf v},
  \end{equation}
  where $\mathcal{S}\coloneqq{\{\mathsf{v} \in \mathbb{R}^{n+m} | \mathsf{u^T} \mathsf{\widetilde{A}}\mathsf{v} =0\>\> \text{for} \> \mathsf{u} \in \ker(\mathsf{B})\}}$.
\end{lemma}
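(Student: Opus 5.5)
The plan is to rewrite the Rayleigh quotient in functional terms and then chain three ingredients. The first is the spectral equivalence of Lemma~\ref{lemma:spectral_equivalence_scaled_mass}. The second is an inverse inequality relating $\|\cdot\|_{0,\Omega_2}$ and $\|\cdot\|_\Lambda$ on $\Lambda_h$. The third is the Malkus-type reformulation of Proposition~\ref{prop:inf_sup_discrete}, namely that $\mathsf B$ is bounded below on a complement of $\ker(\mathsf B)$. Throughout, $\mathsf v=(\mathsf v_1;\mathsf v_2)$ corresponds to $(v_h,v_{2,h})$, and we write $\|v\|_X^2\coloneqq|v_h|_{1,\Omega}^2+\|v_{2,h}\|_{1,\Omega_2}^2$.

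\textbf{Step 1 (algebra to functions).} Set $\mathsf w=\mathsf B\mathsf v$, so that $\mathsf w_k=(\psi_k,v_h|_{\Omega_2}-v_{2,h})_{\Omega_2}$. Then $\mathsf M^{-1}\mathsf w$ is the coefficient vector of $\Pi_h(v_h-v_{2,h})$, where $\Pi_h$ is the $L^2(\Omega_2)$-projection onto $\Lambda_h$, and $\mathsf w^T\mathsf M^{-1}\mathsf w=\|\Pi_h(v_h-v_{2,h})\|_{0,\Omega_2}^2$. By Lemma~\ref{lemma:spectral_equivalence_scaled_mass},
\[
\mathsf v^T\mathsf B^T\mathsf M^{-2}\mathsf B\mathsf v\ \ge\ c_1 h_{\Omega_2}^{-2}\,\|\Pi_h(v_h-v_{2,h})\|_{0,\Omega_2}^2 .
\]

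\textbf{Step 2 (dual norm via inverse inequality).} For $\mu_h\in\Lambda_h$ we have $(\mu_h,v_h-v_{2,h})_{\Omega_2}=(\mu_h,\Pi_h(v_h-v_{2,h}))_{\Omega_2}\le\|\mu_h\|_{0,\Omega_2}\|\Pi_h(v_h-v_{2,h})\|_{0,\Omega_2}$. On a quasi-uniform $\mathcal T_{2,h}$ the standard inverse estimate gives $\|\mu_h\|_{0,\Omega_2}\le C_{\rm inv}h_{\Omega_2}^{-1}\|\mu_h\|_{[H^1(\Omega_2)]^*}$. Hence
\[
\sup_{\mu_h\in\Lambda_h}\frac{(\mu_h,v_h-v_{2,h})_{\Omega_2}}{\|\mu_h\|_\Lambda}\le C_{\rm inv}\,h_{\Omega_2}^{-1}\|\Pi_h(v_h-v_{2,h})\|_{0,\Omega_2}.
\]
The factor $h_{\Omega_2}^{-2}$ from Step 1 exactly absorbs the square of this $h_{\Omega_2}^{-1}$, which is precisely why $\mathsf W=\mathsf M^2$ is the right choice.

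\textbf{Step 3 (inf-sup in primal form, and the choice of complement).} By the standard Babu\v{s}ka--Brezzi argument of Malkus type, Proposition~\ref{prop:inf_sup_discrete} is equivalent to the following statement. For every $v^\perp$ in the $X$-orthogonal complement of $\mathbb K_h$,
\[
\sup_{\mu_h}\frac{(\mu_h,v^\perp_h-v^\perp_{2,h})}{\|\mu_h\|_\Lambda}\ \ge\ \theta\|v^\perp\|_X .
\]
However, $\mathcal S$ is the $\widetilde{\mathsf A}$-orthogonal complement of $\ker\mathsf B$, not the $X$-orthogonal one, and this mismatch is the main obstacle. I will handle it as follows. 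Decompose $v=v^0+v^\perp$ with $v^0\in\mathbb K_h$ and $v^\perp\perp_X\mathbb K_h$. Note that $\mathsf Bv=\mathsf Bv^\perp$. For $v\in\mathcal S$ we have $\mathsf v^T\widetilde{\mathsf A}\mathsf v^0=0$, hence
\[
\mathsf v^T\widetilde{\mathsf A}\mathsf v=\mathsf v^T\widetilde{\mathsf A}\mathsf v^\perp\le(\mathsf v^T\widetilde{\mathsf A}\mathsf v)^{1/2}(\mathsf v^{\perp T}\widetilde{\mathsf A}\mathsf v^\perp)^{1/2}
\]
by Cauchy--Schwarz for the semidefinite form. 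Therefore $\mathsf v^T\widetilde{\mathsf A}\mathsf v\le\mathsf v^{\perp T}\widetilde{\mathsf A}\mathsf v^\perp\le C_\beta\|v^\perp\|_X^2$, where $C_\beta=\max\{\|\beta\|_\infty,\|\beta_2-\beta\|_\infty\}$ comes from continuity of the forms. This is where the coefficient dependence enters. The restriction to $\mathcal S\setminus\ker(\widetilde{\mathsf A})$ only serves to make the quotient well defined.

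\textbf{Step 4 (chain).} Combining Steps 1--3 gives
\[
\mathsf v^T\mathsf B^T\mathsf M^{-2}\mathsf B\mathsf v\ \ge\ \frac{c_1}{C_{\rm inv}^2}\,\theta^2\|v^\perp\|_X^2\ \ge\ \frac{c_1\theta^2}{C_{\rm inv}^2C_\beta}\,\mathsf v^T\widetilde{\mathsf A}\mathsf v .
\]
So one may take $\bar\theta^2=c_1\theta^2/(C_{\rm inv}^2C_\beta)$, which is independent of $h_\Omega$ and $h_{\Omega_2}$. I expect Step 3 to need the most care. Specifically, one must check that the Malkus reformulation applies with $\Lambda_h$ carrying the restricted dual norm $\|\cdot\|_\Lambda$ rather than the discrete dual norm. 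The direction of the supremum used above works with any norm on $\Lambda_h$ that dominates it, so this should be fine.
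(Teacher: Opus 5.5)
Your proof is correct. It follows a genuinely different route from the paper's, although both draw on the same three ingredients: the inverse estimate, the spectral equivalence lemma, and the discrete inf-sup condition.

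\textbf{The paper's route.} It stays at the algebraic level throughout.
\begin{enumerate}
\item It rescales the inf-sup condition into an algebraic quotient with denominator $\mathsf v^T(\widetilde{\mathsf A}+\widetilde{\mathsf M}_2)\mathsf v$. The $L^2$ block $\widetilde{\mathsf M}_2$ is why the factor $\max\{\beta,\beta_2-\beta,1\}$ appears.
\item It uses Malkus to identify the constant with the smallest positive eigenvalue of the pencil $(\mathsf B^T h_{\Omega_2}^{-2}\mathsf M^{-1}\mathsf B,\ \widetilde{\mathsf A}+\widetilde{\mathsf M}_2)$. This is a minimum over the $(\widetilde{\mathsf A}+\widetilde{\mathsf M}_2)$-orthogonal complement of $\ker\mathsf B$.
\item It replaces $h_{\Omega_2}^{-2}\mathsf M^{-1}$ by $\mathsf M^{-2}$ via the spectral equivalence lemma.
\item It passes to the denominator $\widetilde{\mathsf A}$ and the set $\mathcal S$ through a Courant--Fischer comparison for the singular pencil $(\mathsf B^T\mathsf M^{-2}\mathsf B,\widetilde{\mathsf A})$. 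This step needs care with its infinite eigenvalue.
\end{enumerate}

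\textbf{Your route.} You work pointwise with functions.
\begin{enumerate}
\item The identity $\mathsf w^T\mathsf M^{-1}\mathsf w=\|\Pi_h(v_h-v_{2,h})\|_{0,\Omega_2}^2$, together with the inverse estimate, bounds $\mathsf v^T\mathsf B^T\mathsf M^{-2}\mathsf B\mathsf v$ below by the dual-norm functional.
\item The primal form of the inf-sup condition on the $X$-orthogonal complement of $\mathbb K_h$ bounds that functional below.
\item You resolve the mismatch between the two complements with the observation that a $\widetilde{\mathsf A}$-orthogonal representative minimizes the $\widetilde{\mathsf A}$-seminorm on its coset $\mathsf v+\ker\mathsf B$.
\end{enumerate}

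\textbf{What each approach buys.} Yours is more elementary and self-contained. It needs no eigenvalue characterization of the minimum over $\mathcal S$, no minimax principle for singular pencils, and no auxiliary $\widetilde{\mathsf M}_2$. It also drops the artificial $1$ from the coefficient factor, giving $\max\{\beta,\beta_2-\beta\}$. The paper's route gives in return the index-wise comparison $\sigma_i\le\nu_i$ of the two generalized spectra. That comparison fits the eigenvalue framework used in the subsequent mesh-independence theorem.

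\textbf{Your caveat in Step 3.} It is a non-issue. The restricted $[H^1(\Omega_2)]^*$ norm is itself a Hilbert norm on $\Lambda_h$. In finite-dimensional Hilbert spaces, the dual and primal forms of the inf-sup condition are equivalent with the same constant $\theta$: both say that the nonzero singular values of the coupling operator are at least $\theta$.
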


\begin{proof}
  We start by recalling the following inverse inequality, valid for all $\mu_h \in \Lambda_h$:
  \begin{equation}\label{eqn:inverse_inequality}
    \|\mu_h\|_{-1,\Omega_2} \geq C h_{\Omega_2} \|\mu_h\|_{0,\Omega_2},
  \end{equation}with $C$ a positive constant independent of $h_{\Omega_2}$ \cite{BoffiGastaldi2024}. By definition of the norms in~\eqref{eq:norm1} and~\eqref{eq:norm2}, and assuming constant coefficients $\beta_i$ in each subdomain,
  we have
  $$|v_h|_{\mathsf{A},\Omega}= \beta^{\frac{1}{2}}\,|v_h|_{1,\Omega},$$
  $$|v_{2,h}|_{\mathsf{A_2},\Omega_2}= (\beta_2 -\beta)^{\frac{1}{2}}\,|v_{2,h}|_{1,\Omega_2}.$$
  It is straightforward to verify that the sum of norms appearing in the denominator of the inf-sup condition~\eqref{eqn:inf_sup_discrete} satisfies the following inequality
  $$\max\{\beta, \beta_2 - \beta, 1\} \bigl(|v_h|_{1,\Omega}^2 + \|v_{2,h}\|_{1,\Omega_2}^2\bigr) \ge \bigl(|v_h|_{\mathsf{A},\Omega}^2 + |v_{2,h}|_{\mathsf{A_2},\Omega_2}^2 + \|v_{2,h}\|_{0,\Omega_2}^2\bigr).$$
  Combining the above inequality with~\eqref{eqn:inf_sup_discrete} and~\eqref{eqn:inverse_inequality}, we obtain the following \emph{rescaled} inf-sup condition:
  \begin{equation}
    \label{eqn:inf_sup_discrete_scaled}
    \inf_{\mu_h \in \Lambda_h}\sup_{(v_h,v_{2,h}) \in V_h \times V_{2,h}} \frac{(\mu_h, v_{h|\Omega_2} - v_{2,h})_{\Omega_2}}{\bigl(|v_h|_{\mathsf{A},\Omega}^2 + |v_{2,h}|_{\mathsf{A_2},\Omega_2}^2 + \|v_{2,h}\|_{0,\Omega_2}^2\bigr)^{\frac12}\,\, h_{\Omega_2}\|\mu_h\|_{0,\Omega_2}}  \geq \tilde\theta  .
  \end{equation}
  where $\tilde\theta \coloneqq \frac{C \theta}{\sqrt{\max\{\beta, \beta_2-\beta,1\}}}$, independently of the mesh sizes.

  It is easy to see that $\mathsf{v^T} \bigl(\widetilde{\mathsf{{A}}}+\widetilde{\mathsf{{M}}}_2\bigr) \mathsf v$ realizes the sum of norms appearing in the denominator of~\eqref{eqn:inf_sup_discrete_scaled}. Using the fact that $(\mu_h,v_{h|\Omega_2} - v_{2,h})_{\Omega_2}= \mathsf{v^T B^T \upmu}$, the discrete inf-sup condition in~\eqref{eqn:inf_sup_discrete_scaled} can be
  rewritten in the following algebraic form:
  \begin{equation}\label{eqn:discrete_infsup_algebraic}
    \inf_{\upmu \ne 0} \sup_{\mathsf{v} \ne 0} \frac{\mathsf{v^T B^T \upmu}}{(\mathsf{v^T} \bigl(\widetilde{\mathsf{A}}+\widetilde{\mathsf{M}}_2 \bigr)\mathsf{v})^{\frac{1}{2}} (\mathsf{\upmu^T} h_{\Omega_2}^2\mathsf{M \mathsf{\upmu}})^{\frac{1}{2}}}  \geq \tilde\theta.
  \end{equation}Then, we argue as in~\cite[Th. 2]{malkus}, and let $0 < \sigma_1 \leq \sigma_2 \leq \cdots \leq \sigma_{\ell}$ be the $\ell$ largest eigenvalues of the eigenproblem
  \begin{equation}\label{eqn:gen_eig_prob}
    (\mathsf{B^T} h_{\Omega_2}^{-2}\mathsf{M^{-1}} \mathsf{B})\mathsf{v} = \sigma \bigl(\widetilde{\mathsf{A}}+\widetilde{\mathsf{M}}_2 \bigr)\mathsf{ v}.
  \end{equation}
  The constant appearing in Equation~\eqref{eqn:discrete_infsup_algebraic} is given by the square root of $\sigma_1$, which is thus independent
  of the mesh sizes (the proof follows by using the same approach of~\cite{malkus}, where the classical Stokes problem was considered). By considering the properties of the generalized Rayleigh quotient, the following characterization holds:
  \begin{equation}\label{eqn:beta_min}
    \tilde\theta^2 = \sigma_1 = \min_{\mathcal{R}} \frac{\mathsf{v^TB^T} h_{\Omega_2}^{-2}\mathsf{M^{-1}} \mathsf{B} \mathsf{v}}{\mathsf{v^T} \bigl(\widetilde{\mathsf{A}}+\widetilde{\mathsf{M}}_2 \bigr) \mathsf{v}},
  \end{equation}
  where $\mathcal{R}\coloneqq\{\mathsf{v} \in \mathbb{R}^{n+m} ~|~   \mathsf{u^T}\mathsf{(\widetilde{A}+\widetilde{M}_2)}\mathsf{v}=0 \>\> \text{for}\> \mathsf{u} \in \ker(\mathsf{B})\}$.

  Using now the spectral equivalence established in Lemma~\ref{lemma:spectral_equivalence_scaled_mass} between $h_{\Omega_2}^{-2} \mathsf{M}^{-1}$ and $\mathsf{M}^{-2}$, we have for a $\mathsf{w} \in \mathbb{R}^\ell $:
  \begin{equation}\label{eqn:spetr_eq}\frac{c}{C^2} \leq \frac{\mathsf{w^T M^{-2} w}}{\mathsf{w^T} h_{\Omega_2}^{-2} \mathsf{M^{-1} w}} \leq \frac{C}{c^2}, %
  \end{equation}
  where the positive constants $c, C$ are independent of the discretization parameters. Setting $\mathsf{w} \!=\! \mathsf{B} \mathsf v$, and multiplying each term of (\ref{eqn:spetr_eq}) by $\frac{\mathsf{w^T} (h_{\Omega_2}^2 \mathsf{M})^{-1} \mathsf{w}}{\mathsf{v^T} \bigl(\widetilde{\mathsf{A}}+\widetilde{\mathsf{M}}_2 \bigr) \mathsf v}$, we get
  $$0 < \frac{c}{C^2} \> \frac{\mathsf{v^T} \mathsf{B^T}(h_{\Omega_2}^2 \mathsf{M})^{-1} \mathsf{B}\mathsf v}{\mathsf{v^T} \bigl(\widetilde{\mathsf{A}}+\widetilde{\mathsf{M}}_2 \bigr)\mathsf v}\le \frac{\mathsf{v^T} \mathsf{B^T} \mathsf{M^{-2}} \mathsf{B}\mathsf v,}{\mathsf{v^T} \bigl(\widetilde{\mathsf{A}}+\widetilde{\mathsf{M}}_2 \bigr) \mathsf v} \le \frac{C}{c^2}\> \frac{\mathsf{v^T}\mathsf{B^T}(h_{\Omega_2}^2 \mathsf{M})^{-1} \mathsf{B} \mathsf v}{\mathsf{v^T} \bigl(\widetilde{\mathsf{A}}+\widetilde{\mathsf{M}}_2 \bigr) \mathsf v}.$$

  Combining this result with~\eqref{eqn:beta_min}, it follows that

  \begin{equation}\label{eqn:first_bound}
    \bar \theta^2 \leq \min_{\mathcal{R}} \frac{\mathsf{v^TB^T} \mathsf{M^{-2}} \mathsf{B} \mathsf{v}}{\mathsf{v^T} \bigl(\widetilde{\mathsf{A}}+\widetilde{\mathsf{M}}_2 \bigr)\mathsf{ v}},
  \end{equation}
  (where $\bar\theta^2 \coloneqq \frac{{c}}{C^2}\tilde\theta^2$), uniformly in $h_\Omega$ and $h_{\Omega_2}$.

  We now conclude the proof by comparing the generalized Rayleigh quotient in~\eqref{eqn:first_bound} with the one whose denominator is $\mathsf{v^T}\widetilde{\mathsf{A}}\mathsf{v}$. According to the theory of generalized eigenvalue problems~\cite{LiRenCang}, since $\dim \bigl(\ker(\widetilde{\mathsf{{A}}})\bigr)=1$, the matrix pencil $\mathsf{B^T} \mathsf{M}^{-2}\mathsf{B}- \nu \widetilde{\mathsf{A}}$ has exactly one infinite eigenvalue with eigenvector belonging to $\ker(\widetilde{\mathsf{{A}}})$.

  We have
  \begin{equation}\label{eq:pointwise_compare_spd_final}
    \frac{\mathsf{v^T}\mathsf{B^T M^{-2} B}\,\mathsf v}{\mathsf{v^T}(\widetilde{\mathsf{A}}+\widetilde{\mathsf{M}}_2)\mathsf v}
    \le
    \frac{\mathsf{v^T}\mathsf{B^T M^{-2} B}\,\mathsf v}{\mathsf{v^T}\widetilde{\mathsf{A}}\mathsf v} \qquad \forall \mathsf{v} \in\mathbb{R}^{n+m}, \text{ with } \mathsf{v} \notin \ker(\widetilde{\mathsf{A}}).
  \end{equation}
  Using this inequality together with Theorem~\ref{thm:Courant-Fisher}, we can compare the eigenvalues of the two associated generalized eigenvalue problems. We note that the minimax characterization in Theorem~\ref{thm:Courant-Fisher} can naturally accommodate infinite eigenvalues. In particular, vectors in $\ker(\widetilde{\mathsf A})$ formally correspond to an infinite generalized eigenvalue in the second quotient, leaving the inequality trivially satisfied. In other words, in the right-hand side of the inequality above, including or excluding $\ker(\widetilde{\mathsf A})$ from the subspaces over
  which the minimax principle is taken, does not affect the smallest finite eigenvalue.
  Consequently, the restriction $\mathsf{v^T} \widetilde{\mathsf A}\, \mathsf v \neq 0$ in the supremum can be removed, and we get:

  \[
    \sigma_i \le
    \inf_{\substack{\mathcal{X} \\ \dim \mathcal{X} = i}}
    \ \sup_{\substack{\mathsf{v} \in \mathcal{X}}}
    \frac{\mathsf{v^T B^T M^{-2} B v}}{\mathsf{v^T \bigl(\widetilde{\mathsf{A}}+\widetilde{\mathsf{M}}_2 \bigr) v}} \le
    \inf_{\substack{\mathcal{X} \\ \dim \mathcal{X} = i}}
    \ \sup_{\substack{\mathsf{v} \in \mathcal{X} }}
    \frac{\mathsf{v^T B^T M^{-2} B v}}{\mathsf{v^T \widetilde{\mathsf{A}} v}}=\nu_i,
  \]
  where $\sigma_i$ are the generalized eigenvalues of~\eqref{eqn:gen_eig_prob}, while $\nu_i$ are those of $\mathsf{B^T M^{-2} B v}=\nu  \widetilde{\mathsf{A}}\mathsf{v}$.
  Therefore, each $\nu_i$ is greater than or equal to the corresponding $\sigma_i$. This also applies to the smallest positive eigenvalue, and we obtain
  $$\bar{\theta}^2 \le \min_{\mathcal{R}} \frac{\mathsf{v^TB^T} \mathsf{M^{-2}} \mathsf{B} \mathsf{v}}{\mathsf{v^T} \bigl(\widetilde{\mathsf{A}}+\widetilde{\mathsf{M}}_2 \bigr)\mathsf{ v}} \le \min_{\mathcal{S}} \frac{\mathsf{v^TB^T} \mathsf{M^{-2}} \mathsf{B} \mathsf{v}}{\mathsf{v^T} \widetilde{\mathsf{A}}\mathsf{ v}}, $$
  where $\mathcal{S}\coloneqq{\{\mathsf{v} \in \mathbb{R}^{n+m} | \mathsf{u^T} \mathsf{\widetilde{A}}\mathsf{v} =0\>\> \text{for} \> \mathsf{u} \in \ker(\mathsf{B})\}}$.
  The thesis follows from the observation that the minimization over $\mathcal{S}$ is unaffected by the presence of $\ker(\widetilde{\mathsf A})$. Therefore, minimizing over $\mathcal{S}$ or over $\mathcal{S}\setminus\ker(\widetilde{\mathsf A})$ yields the same value.
\end{proof}

\begin{theorem}[Mesh-independence for lower bound]\label{thm:spectral_AL_independence_of_h}
  Let $V_h$, $V_{2,h}$ and $\Lambda_h$ be defined as in Section~\ref{sec:method}. If $\mathsf{W} \coloneqq \mathsf{M^2}$, then the lower bound
  in $$\operatorname{Spec}(\mathcal{P}_{\gamma}^{-1} \mathcal{A}_{\gamma}) \subseteq \left [ \eta, 1 \right ]$$
  from Theorem~\ref{thm:lambda_bound} is bounded away from zero independently of the discretization parameters, $h_\Omega$ and $h_{\Omega_2}$ (but depending on coefficients $\beta$ and $\beta_2$).
\end{theorem}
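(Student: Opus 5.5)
We take the characterization of $\eta$ from Theorem~\ref{thm:lambda_bound} and rewrite it as a monotone function of a single Rayleigh quotient. For $\mathsf{x}\in\mathcal{S}$ with $\mathsf{Bx}\neq 0$ and $\mathsf{x}\notin\ker(\widetilde{\mathsf{A}})$, set
\[
  r(\mathsf{x}) \coloneqq \frac{\mathsf{x^T B^T M^{-2} B x}}{\mathsf{x^T}\widetilde{\mathsf{A}}\mathsf{x}}.
\]
With $\mathsf{W}=\mathsf{M^2}$, dividing numerator and denominator of the quotient defining $\eta$ by $\mathsf{x^T}\widetilde{\mathsf{A}}\mathsf{x}$ gives
\[
  \frac{\gamma\,\mathsf{x^T B^T M^{-2} B x}}{\mathsf{x^T}\widetilde{\mathsf{A}}\mathsf{x}+\gamma\,\mathsf{x^T B^T M^{-2} B x}} = \frac{\gamma\, r(\mathsf{x})}{1+\gamma\, r(\mathsf{x})}.
\]
The map $t\mapsto \gamma t/(1+\gamma t)$ is increasing on $[0,\infty)$. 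Hence any uniform lower bound $r(\mathsf{x})\ge \bar\theta^2$ on the relevant set transfers directly into a lower bound for $\eta$.

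The key step is to invoke Lemma~\ref{lemma:algebraic_infsup}. It states precisely that $\min_{\mathcal{S}\setminus\ker(\widetilde{\mathsf{A}})} r(\mathsf{x})\ge\bar\theta^2$, where $\bar\theta$ depends on $\beta,\beta_2$ and on the inf-sup and mass-matrix constants, but not on $h_\Omega$ or $h_{\Omega_2}$. This yields
\[
  \eta \;\ge\; \frac{\gamma\bar\theta^2}{1+\gamma\bar\theta^2}>0,
\]
which is uniform in the mesh sizes. As a by-product, it shows explicitly how $\eta$ tends to $1$ as $\gamma\to\infty$.

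The main obstacle is the bookkeeping on vectors excluded from the Lemma's minimization set, so that the minimum defining $\eta$ really coincides with the one controlled by the Lemma. There are two cases.
\begin{itemize}
  \item If $\mathsf{x}\in\ker(\widetilde{\mathsf{A}})\cap\mathcal{S}$ with $\mathsf{Bx}\neq 0$, the quotient in $\eta$ equals $1$, so it cannot decrease the minimum.
  \item If $\mathsf{Bx}=0$, the quotient would be $0$. These vectors must be excluded, consistently with the proof of Theorem~\ref{thm:lambda_bound}: nonunit eigenvectors satisfy $\mathsf{x}\notin\ker(\mathsf{B})$, and vectors in $\ker(\mathsf{B})$ only produce the eigenvalue $1$.
\end{itemize}
We must also check that a vector in $\mathcal{S}$ cannot lie in $\ker(\mathsf{B})\setminus\{0\}$ unless it is $\widetilde{\mathsf{A}}$-trivial. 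Indeed, if $\mathsf{x}\in\mathcal{S}\cap\ker(\mathsf{B})$, taking $\mathsf{u}=\mathsf{x}$ in the definition of $\mathcal{S}$ gives $\mathsf{x^T}\widetilde{\mathsf{A}}\mathsf{x}=0$. Then $\mathsf{x}\in\ker(\widetilde{\mathsf{A}})\cap\ker(\mathsf{B})=\{0\}$. So the minimum in $\eta$ is effectively taken over $\mathcal{S}\setminus\{0\}$, where $\mathsf{Bx}\neq0$ automatically.

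Finally, we record that the bound depends on the coefficients only through $\tilde\theta=C\theta/\sqrt{\max\{\beta,\beta_2-\beta,1\}}$. This gives the stated dependence on $\beta$ and $\beta_2$ while remaining independent of $h_\Omega$ and $h_{\Omega_2}$.
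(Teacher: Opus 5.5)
Your proposal is correct and follows essentially the same route as the paper. You rewrite the quotient defining $\eta$ as $\gamma r(\mathsf{x})/(1+\gamma r(\mathsf{x}))$, use monotonicity of $t\mapsto \gamma t/(1+\gamma t)$, and invoke Lemma~\ref{lemma:algebraic_infsup} with $\mathsf{W}=\mathsf{M}^2$ to get $\eta\ge \gamma\bar\theta^2/(1+\gamma\bar\theta^2)$. Your check that $\mathcal{S}\cap\ker(\mathsf{B})=\{0\}$ (take $\mathsf{u}=\mathsf{x}$ in the definition of $\mathcal{S}$) handles the excluded kernel more cleanly than the paper's informal treatment.
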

\begin{proof}
  From the proof of Theorem~\ref{thm:lambda_bound} (more precisely from Equation~\eqref{eqn:first_row_mod}), we know that all nonunit eigenvalues of $\mathcal{P}^{-1}_{ \gamma} \mathcal{A}_{\gamma}$ coincide with the eigenvalues of the following generalized eigenvalue problem:
  \begin{equation}\label{eq:gen_EP}
    \gamma  \mathsf{B^T} \mathsf{W}^{-1}\mathsf{B}  \mathsf x = \nu (\widetilde{\mathsf{A}} + \gamma \mathsf{B^T W^{-1} B})\mathsf x,
  \end{equation}
  excluding the cases $\nu = 0$ and $\nu = 1$, which correspond respectively to eigenvectors $\mathsf{x} \in \ker(\mathsf{B})$ and $\mathsf{x} \in \ker(\widetilde{\mathsf{A}})$.
  Our goal is to show that $\eta=\nu_{\min}^+$, the smallest positive eigenvalue of~\eqref{eq:gen_EP}, is bounded away from zero. By Remark~\ref{lemma:gen_theory}, this eigenvalue admits the Rayleigh quotient characterization
  \begin{equation}\label{eqn:eta_min}\eta = \min_{\mathsf{v} \in \mathcal{S}} \frac{\gamma \mathsf{v^T B^T W^{-1} B v}}{\mathsf{v^T}(\widetilde{\mathsf{A}} + \gamma \mathsf{B^T W^{-1} B })\mathsf{v}},\end{equation}where $\mathcal{S}$ is defined as

  \[\mathcal{S} \coloneqq \{\mathsf{v} \in \mathbb{R}^{n+m}: \mathsf{u^T}(\widetilde{\mathsf{A}} + \gamma \mathsf{B^T W^{-1} B })\mathsf{v} =0 \quad \forall \mathsf{u} \in \ker(\mathsf{B}) \}.  \]
  The condition $\mathsf{u} \in \ker(\mathsf{B})$ gives the following alternative characterization of $\mathcal{S}$:
  \[\mathcal{S} = \{\mathsf{v} \in \mathbb{R}^{n+m}: \mathsf{u^T}\mathsf{\widetilde{\mathsf{A}}}\mathsf{v} =0 \quad \forall \mathsf{u} \in \ker(\mathsf{B}) \}.\]
  Note that $\mathcal{S}$ contains any nonzero vector in $\ker(\mathsf{\widetilde{A}})$. However, such vectors correspond to the eigenvalue $\nu=1$ in~\eqref{eq:gen_EP}, that is the largest eigenvalue rather than the smallest positive one. Nevertheless, it is convenient to exclude $\ker(\mathsf{\widetilde{A}})$ from $\mathcal{S}$, because for every $\mathsf{v}\in\mathcal{S}\setminus\ker(\widetilde{\mathsf{A}})$ we have $\mathsf{v^{T}\widetilde{\mathsf{A}}v>0}$, thus preventing division by zero in the Rayleigh quotient below. This exclusion does not alter the value of the smallest \emph{finite} eigenvalue, since
  minimizing over $\mathcal{S}$ or over
  $\mathcal{S}\setminus \ker(\widetilde{\mathsf{A}})$ yields the same result for $\eta$.

  Consequently, we may safely multiply and divide the Rayleigh quotient in~\eqref{eqn:eta_min} by $\mathsf{v^{T}\widetilde{\mathsf{A}}v}$, and obtain
  \[
    \eta = \min_{\mathsf{v} \in \mathcal{S}\setminus\ker(\widetilde{\mathsf{A}})} \frac{\gamma \mathsf{v^T B^T W^{-1} B v}}{\mathsf{v^T \widetilde{\mathsf{A}} v}} \frac{\mathsf{v^T \widetilde{\mathsf{A}} v}}{\mathsf{v^T}(\widetilde{\mathsf{A}} + \gamma \mathsf{B^T W^{-1} B })\mathsf{v}}.
  \]

  As long as the denominator is nonzero, we can define $r(\mathsf{v}) \coloneqq \frac{\mathsf{v^T B^T W^{-1} B v}}{\mathsf{v^T} \widetilde{\mathsf{A}} \mathsf{v}}$ as the generalized Rayleigh quotient associated with $\mathsf{B^T W^{-1} B}$ and $\widetilde{\mathsf{{A}}}$.  We observe that $\eta$ can
  be written as
  \[\eta = \min_{\mathsf{v} \in \mathcal{S}\setminus\ker(\widetilde{\mathsf{A}})} \frac{\gamma r(\mathsf{v})}{1 + \gamma r(\mathsf{v})} = \min_{\mathsf{v} \in \mathcal{S}\setminus\ker(\widetilde{\mathsf{A}})} f(r(\mathsf{v})).\] Since $f_{\gamma}: x \mapsto \frac{\gamma x}{1 + \gamma x}$ is a monotone increasing
  function for $x>0$,
  $$\min_{\mathsf{v} \in \mathcal{S}\setminus\ker(\widetilde{\mathsf{A}})} f(r(\mathsf{v})) =  f\biggl(\min_{\mathsf{v} \in \mathcal{S}\setminus\ker(\widetilde{\mathsf{A}})}r(\mathsf{v})\biggr).$$

  Choosing $\mathsf{W=M^2}$, from Lemma~\ref{lemma:algebraic_infsup} we conclude that
  \begin{equation}\label{eqn:lower_bound}
    \eta \geq  f(\bar \theta^2)  = \frac{\gamma \bar \theta^2}{1 + \gamma \bar \theta^2}>0.
  \end{equation}
\end{proof}

\begin{remark}[Three-dimensional case]\label{remark:3d_lower_bound}
  When $d=3$, using the same arguments as above, we get
  \[\eta \geq  f(\bar \theta^2)  = \frac{\gamma \bar \theta^2}{h_{\Omega_2} + \gamma \bar \theta^2}>0.\]
  To show this, assuming a quasi-uniform discretization for $\mathcal{T}_{2,h}$, it holds:
  \begin{equation}
    c h_{\Omega_2}^3 \le \frac{ \mathsf{w^T}\mathsf{M} \mathsf{w}  }{\mathsf{w^T} \mathsf{w}} \le C h_{\Omega_2}^3 \quad \quad \forall \mathsf{w} \in \mathbb{R}^\ell ,
  \end{equation}
  yielding
  \begin{equation}\label{eq:bounds1}
    0 < \frac{c}{C^2 h_{\Omega_2}} \le \frac{\mathsf{w^T} \mathsf{M^{-2}} \mathsf{w}}{\mathsf{w^T} (h_{\Omega_2}^2 \mathsf{M})^{-1} \mathsf{w}} \le \frac{C}{c^2 h_{\Omega_2}}.
  \end{equation}
  Setting again $\mathsf{w} = \mathsf{B v}$ in the proof of Lemma~\ref{lemma:algebraic_infsup}, it holds that
  \begin{equation}\label{eq:2d}
    \min_{\{\mathsf{v} \in \mathbb{R}^{n+m} | \mathsf{u^T}\widetilde{\mathsf{A}} \mathsf{v}  =0 \text{ for }  \mathsf{u} \in \ker(\mathsf{B})\}}\frac{\mathsf{v^T} \mathsf{B^T} \mathsf{M^{-2}}\mathsf{B}\mathsf{v} }{ \mathsf{v^T}\widetilde{\mathsf{A}} \mathsf v } \ge \frac{c}{C^2}\frac{\tilde{\theta}^2}{h_{\Omega_2}}.
  \end{equation}
  Following the same steps used to derive Equation~\eqref{eqn:lower_bound}, we employ the inequality in~\eqref{eq:2d} above to obtain (denoting $\bar \theta^2=\frac{c}{C^2} \tilde \theta^2$)
  \begin{equation}\label{eqn:bound_h_beta}
    0<\frac{\gamma \bar{\theta}^2}{h_{\Omega_2}+\gamma \bar{\theta}^2} \leq  \eta.
  \end{equation}
  Notice how in this case the lower bound for the eigenvalues of the preconditioned matrix depends explicitly on $h_{\Omega_2}$ but in a favorable way. Indeed, it is immediate to observe that as $h_{\Omega_2}\rightarrow 0$, the lower bound in~\eqref{eqn:bound_h_beta} tends to $1$. Therefore, we conclude that also in this scenario the bound is robust with respect to the discretization parameters. In particular, the theoretical analysis suggests that, for the ideal AL preconditioner, the spectral distribution does not deteriorate under mesh refinement and may even improve.

\end{remark}
\begin{remark}[Parameter robustness]\label{rmk:parameter_robustness}
  From the previous proof and Lemma~\ref{lemma:algebraic_infsup}, it follows that the lower bound obtained in Theorem~\ref{thm:lambda_bound} will in general depend on~$\beta$ and on the jump $\beta_2-\beta$ through the quantity $\max\{\beta,\beta_2-\beta,1\}$. Indeed, the lower bound in~\eqref{eqn:lower_bound} involves the constant $\bar{\theta}^2$, which is defined in terms of $\tilde{\theta}^2$ in~\eqref{eqn:first_bound}; in turn, $\tilde{\theta}^2$ depends on $\max\{\beta,\beta_2-\beta,1\}$ as introduced in the discrete inf-sup estimate~\eqref{eqn:inf_sup_discrete_scaled}:
    \begin{equation*}
      \eta \geq f(\bar{\theta}^2)
      = \frac{\gamma\bar{\theta}^2}{1+\gamma\bar{\theta}^2}
      = \frac{\gamma\frac{c}{C^2}\tilde{\theta}^2}
      {1+\gamma\frac{c}{C^2}\tilde{\theta}^2}
      = \frac{\gamma\frac{\tilde c}{\max\{\beta,\beta_2-\beta,1\}}\theta^2}
      {1+\gamma\frac{\tilde c}{\max\{\beta,\beta_2-\beta,1\}}\theta^2}
      >0,
    \end{equation*}
    where $\tilde c$ contains the involved constants.
  These results show that choosing $\gamma = \mathcal{O}\!\left(\max\{\beta,\beta_2-\beta,1\}\right)$
  in~\eqref{eqn:lower_bound} or~\eqref{eqn:bound_h_beta} is sufficient to guarantee that all nonunit eigenvalues of $\mathcal{P}_{\gamma}^{-1}\mathcal{A}_{\gamma}$ remain uniformly bounded away from $0$. 
This behaviour is consistent with the observations reported in~\cite{ALprec} for the Oseen problem, where the authors show that the spectral bounds of the preconditioned system depend on the kinematic viscosity $\upnu$, and therefore a parameter choice of the form $\gamma = \mathcal{O}(\upnu^{-1})$ is theoretically required. Consequently, $\gamma$ becomes
large in the high Reynolds number regime (for small values of $\upnu$).
In our setting, an analogous behavior holds: as the coefficient jump
increases, $\gamma$ must be taken correspondingly larger. As pointed out in~\cite{ALprec}, this theoretical guideline is of limited practical relevance.  Indeed, in the numerical results in Section~\ref{sec:numerical_experiments} we will observe that, for the ideal AL preconditioner $\mathcal{P}_\gamma$, using a fixed value of $\gamma$ already yields convergence of the Krylov subspace method that is independent of $h_\Omega$, $h_{\Omega_2}$, the coefficients $\beta$, $\beta_2$, and their jump $\beta_2-\beta$, without the need to scale $\gamma$ according to the problem parameters.  Therefore, this theoretical condition on $\gamma$ is sufficient but conservative.

\end{remark}

\section{Modified AL preconditioner}\label{sec:modified_prec}

As shown in prior studies, e.g.~\cite{ALprec,modALprec}, the performance of the AL approach crucially depends on the availability of an effective approximate solver for the augmented block.
However, as previously mentioned, developing such a solver is a nontrivial task.
The main difficulty comes from the fact that when \(\gamma\) is large the augmented block becomes increasingly ill-conditioned. For this reason, in~\cite{ALprec}, a robust multigrid solver for it was proposed for the Oseen problem and later extended in~\cite{Farrell2019} to the three-dimensional case.
We note that developing such a tailored solver in our setting is more cumbersome, due to the independence of the two grids $\mathcal{T}_h$ and $\mathcal{T}_{2,h}$, which further complicates the development of possible multilevel approaches.
Therefore, we adopt the modified AL approach proposed in~\cite{modALprec}, adjusted to take advantage of the specific properties of the elliptic interface problem.

In what follows, we recall the idea behind the modified AL approach, briefly referring to the context in which it was first introduced in~\cite{modALprec}, namely the Oseen problem. In this section, \( \mathsf{u} \) and \( \mathsf{p} \) denote nodal values for velocity and pressure, respectively, while \( \mathsf{N} \) and \( \mathsf{B}  \) represent the discretizations of the diffusion--convection terms and (negative) divergence operator, respectively.

\subsection{%
  Modified AL preconditioner for the Oseen problem}
Discretization of the Oseen equations using stable finite element pairs yields large, sparse linear systems of the form
\begin{equation}\label{eqn:oseen}
  \begin{bmatrix}
    \mathsf{N} & \mathsf{B^T} \\[0.3em]
    \mathsf{B} & \mathsf{0}
  \end{bmatrix}
  \begin{bmatrix}
    \mathsf{u} \\[0.3em] \mathsf{p}
  \end{bmatrix}
  =
  \begin{bmatrix}
    \mathsf{f} \\[0.3em] \mathsf{g}
  \end{bmatrix}.
\end{equation}

The key observation is that, in two dimensions, the matrix \( \mathsf{N} \) associated with the convection-diffusion operator is block diagonal: \( \mathsf{N} = \mathrm{diag}(\mathsf{N_1}, \mathsf{N_2}) \), and writing  $\mathsf{B} = \begin{bmatrix}
    \mathsf{B_1} & \mathsf{B_2}
  \end{bmatrix}$ , the saddle-point matrix in~\eqref{eqn:oseen} can be rewritten as
\begin{equation}\label{eqn:mat_oseen}
  \mathcal{A} =
  \begin{bmatrix}
    \mathsf{N_1} & 0            & \mathsf{B_1^T} \\[0.3em]
    0            & \mathsf{N_2} & \mathsf{B_2^T} \\[0.3em]
    \mathsf{B_1} & \mathsf{B_2} & \mathsf{0}
  \end{bmatrix}.
\end{equation}

Therefore, the augmented \((1,1)\)-block can be written as
\[
  \mathsf{N_\gamma} =
  \mathsf{N} + \gamma \mathsf{B^T W^{-1} B}
  =
  \begin{bmatrix}
    \mathsf{N_1} + \gamma \mathsf{B_1^T W^{-1} B_1} & \gamma \mathsf{B_1^T W^{-1} B_2}                \\[0.3em]
    \gamma \mathsf{B_2^T W^{-1} B_1}                & \mathsf{N_2} + \gamma \mathsf{B_2^T W^{-1} B_2}
  \end{bmatrix}
  =
  \begin{bmatrix}
    \mathsf{N_{11}} & \mathsf{N_{12}} \\[0.3em] \mathsf{N_{21}} & \mathsf{N_{22}}
  \end{bmatrix},
\]where $\gamma$ is a positive scalar and $\mathsf{W}$ is again a suitable SPD matrix. The \emph{modified} AL preconditioner is then defined as the following block-triangular matrix:
\[
  \widetilde{\mathcal{P}}_\gamma =
  \begin{bmatrix}
    \mathsf{N_{11}} & \mathsf{N_{12}} & \mathsf{B_1^T}       \\[0.3em]
    0               & \mathsf{N_{22}} & \mathsf{B_2^T}       \\[0.3em]
    0               & 0               & \mathsf{\widehat{S}}
  \end{bmatrix},
\]
where \( \mathsf{\widehat{S}} \) is taken as $-\frac 1\gamma \mathsf{M_p}$, being $\mathsf{M_p}$ the classical pressure mass matrix~\cite{spectral_analysis_modified}. Due to the new block-triangular structure of \( \widetilde{\mathcal{P}}_\gamma \), most of the computational effort involved in applying \( \widetilde{\mathcal{P}}_\gamma^{-1} \) to a vector reduces to solving two linear systems with matrices \( \mathsf{N_{11}} \) and \( \mathsf{N_{22}} \).
These two subsystems can be solved either exactly or approximately by means of iterative methods. This approach also extends naturally to three-dimensional problems.

As pointed out in~\cite{modALprec}, the fact that the modified AL preconditioner is derived from the ideal one by neglecting a block of the form \( \gamma \mathsf{B_2^T W^{-1} B_1} \) suggests that the parameter \( \gamma \) should not be chosen excessively large. Although the preconditioner
is robust with respect to mesh refinement (see, e.g., the analysis in~\cite{FoVAL}), it still exhibits some dependence on the viscosity parameter.

\subsection{Modified AL preconditioner for elliptic interface problem}

Having introduced the idea behind the modified AL approach, we observe that the matrix $\mathcal{A}$ in~\eqref{eqn:mat_oseen} shares the same block structure as our system matrix in~\eqref{eqn:matrix}. Hence, the natural structure of the preconditioner looks as follows:
\begin{equation}\label{eqn:mod_gammaid}
  \mathcal{\widetilde P}_{\gamma} =
  \begin{bmatrix}
    \mathsf{A +  \gamma C^T W^{-1}C} & \mathsf{-\gamma C^T W^{-1}C_2}         & \mathsf{C^T}     \\
    0                                & \mathsf{A_2 +  \gamma C_2^T W^{-1}C_2} & \mathsf{-C_2^T}  \\
    0                                & 0                                      & \mathsf{\hat{S}}
  \end{bmatrix}.
\end{equation}
We note that, in our problem, the matrices \( \mathsf{A} \) and \( \mathsf{A_2} \) correspond to two \emph{different} equations and are not associated with a block partitioning of unknowns for a single discrete operator, as was the case in the Oseen problem. The same applies to \( \mathsf{C} \) and \( \mathsf{C_2} \), which in our problem denote two distinct operators, rather than two blocks of the same discrete operator in each coordinate direction, as $\mathsf{B_1}$ and $\mathsf{B_2}$.
This allows us to consider a different way of augmenting them. That is, we augment the first and second rows of the system with positive but different parameters $\gamma_1$ and $\gamma_2$, respectively. This yields the following augmented system:

\begin{equation}
  \begin{bmatrix}\label{eqn:ALmatrix_mod}
    \mathsf{A +  \gamma_1 C^T W^{-1}C} & \mathsf{-\gamma_1 C^T W^{-1}C_2}         & \mathsf{C^T}    \\
    \mathsf{-\gamma_2 C_2^T W^{-1}C}   & \mathsf{A_2 +  \gamma_2 C_2^T W^{-1}C_2} & \mathsf{-C_2^T} \\
    \mathsf{C}                         & \mathsf{-C_2}                            & 0
  \end{bmatrix}
  \begin{bmatrix}
    \mathsf{u}   \\
    \mathsf{u_2} \\
    \mathsf{\lambda}
  \end{bmatrix}=
  \begin{bmatrix}
    \mathsf{f} \\
    \mathsf{g} \\
    \mathsf{0}
  \end{bmatrix},
\end{equation} for which we propose the following \emph{modified} AL preconditioner:
\begin{equation}\label{eqn:P_AL_mod_gamma_12}
  \mathcal{\widetilde P}_{\gamma_1,\gamma_2}\coloneqq
  \begin{bmatrix}
    \mathsf{A +  \gamma_1 C^T W^{-1}C} & \mathsf{-\gamma_1 C^T W^{-1}C_2}         & \mathsf{C^T}     \\
    0                                  & \mathsf{A_2 +  \gamma_2 C_2^T W^{-1}C_2} & \mathsf{-C_2^T}  \\
    0                                  & 0                                        & \mathsf{\hat{S}}
  \end{bmatrix}.
\end{equation}
The selection of $\mathsf{\hat{S}}$ will be detailed in the spectral analysis presented in Section~\ref{sec:spectral_modified}, where we demonstrate that an appropriate choice is $\mathsf{\hat{S}} = -\frac{1}{\gamma_1}\mathsf{W}$.
Note that the $(2,2)$-block in~\eqref{eqn:mod_gammaid} and~\eqref{eqn:P_AL_mod_gamma_12} is nonsingular, owing to the fact that $\mathsf{A_2}$ is positive definite on $\ker(\mathsf{C_2})$ (cf. Remark~\ref{rmk:singularity}).

The main benefit of this new variant is that we can now select \emph{different} values for the augmentation parameters $\gamma_1$ and $\gamma_2$. In particular, $\gamma_1$ can be taken sufficiently large to ensure the strong eigenvalue clustering inherited from the ideal AL preconditioner, while $\gamma_2$ can remain small, thereby mitigating the effect of dropping the $(2,1)$-block while stabilizing the semidefinite block associated with $\mathsf{A_2}$.

However, note that, in practice, the choice of the parameters $\gamma_1$ and $\gamma_2$ is guided by a tradeoff between the outer convergence and the cost of approximately inverting the augmented diagonal blocks: just as $\gamma_1$ cannot be made too large, since this would render the augmented $(1,1)$-block in~\eqref{eqn:P_AL_mod_gamma_12} increasingly ill-conditioned, $\gamma_2$ likewise cannot be taken too small, as this would make the augmented $(2,2)$-block nearly singular, due to the semidefiniteness of $\mathsf{A_2}$. We will see in the numerical experiments that, in contrast with classical modified approaches (such as in~\cite{modALprec,spectral_analysis_modified}), we do not need to tune these parameters upon mesh refinement. The values adopted in Section~\ref{sec:numerical_experiments} should be regarded as robust empirical choices for the class of problems considered, rather than as universally optimal tuning parameters.

\section{Spectral analysis for the modified preconditioner}\label{sec:spectral_modified}
In this section, we perform a spectral analysis for the modified AL preconditioner introduced in~\eqref{eqn:P_AL_mod_gamma_12}. Our derivation closely follows the spectral analysis developed in~\cite{spectral_analysis_modified} for the modified AL preconditioner applied to the steady-state Navier-Stokes equations. We start by recalling the AL formulation~\eqref{eqn:ALmatrix_mod}:
\begin{equation}\label{eqn:aug_gammadiv}
  \mathcal{A}_{\gamma_1,\gamma_2}\coloneqq
  \begin{bmatrix}
    \mathsf{A +  \gamma_1 C^T W^{-1}C} & \mathsf{-\gamma_1 C^T W^{-1}C_2}         & \mathsf{C^T}    \\
    \mathsf{-\gamma_2 C_2^T W^{-1}C}   & \mathsf{A_2 +  \gamma_2 C_2^T W^{-1}C_2} & \mathsf{-C_2^T} \\
    \mathsf{C}                         & \mathsf{-C_2}                            & 0
  \end{bmatrix}
  =
  \begin{bmatrix}
    \mathsf{A_{11}} & \mathsf{A_{12}} & \mathsf{C^T}    \\
    \mathsf{A_{21}} & \mathsf{A_{22}} & \mathsf{-C_2^T} \\
    \mathsf{C}      & \mathsf{-C_2}   & 0
  \end{bmatrix},
\end{equation}
and the associated modified AL preconditioner~\eqref{eqn:P_AL_mod_gamma_12} when different parameters $\gamma_1$ and $\gamma_2$ are used:
\begin{equation}
  \mathcal{\widetilde P}_{\gamma_1,\gamma_2} =
  \begin{bmatrix}
    \mathsf{A_{11}} & \mathsf{A_{12}} & \mathsf{C^T}     \\
    0               & \mathsf{A_{22}} & \mathsf{-C_2^T}  \\
    0               & 0               & \mathsf{\hat{S}}
  \end{bmatrix}=
  \begin{bmatrix}
    \widetilde{\mathsf{A}}_{\gamma_1,\gamma_2} & \mathsf{B^T}     \\
    0                                          & \mathsf{\hat{S}}
  \end{bmatrix}.
\end{equation}
The block upper-triangular structure of $\mathcal{\widetilde P}_{\gamma_1,\gamma_2}$ yields the following factorization of its inverse:
\begin{equation}\label{eqn:fact}
  \mathcal{\widetilde P}_{\gamma_1,\gamma_2}^{-1} =
  \begin{bmatrix}
    \widetilde{\mathsf{A}}_{\gamma_1,\gamma_2}^{-1} & 0               \\
    0                                               & \mathsf{I}_\ell
  \end{bmatrix}
  \begin{bmatrix}
    \mathsf{I}_{n+m} & \mathsf{B^T}     \\
    0                & \mathsf{-I}_\ell
  \end{bmatrix}
  \begin{bmatrix}
    \mathsf{I}_{n+m} & 0                      \\
    0                & -\hat{\mathsf{S}}^{-1}
  \end{bmatrix},
\end{equation}
where
\[
  \widetilde{\mathsf{A}}_{\gamma_1,\gamma_2} \coloneqq
  \begin{bmatrix}
    \mathsf{A} + \gamma_1 \mathsf{C^T W^{-1} C} & -\gamma_1 \mathsf{C^T W^{-1} C_2}                 \\[0.3em]
    0                                           & \mathsf{A_2} + \gamma_2 \mathsf{C_2^T W^{-1} C_2}
  \end{bmatrix}.
\]

\vspace{0.1cm}
The result stated in the lemma below is proved in Appendix~\ref{sec:appendix_proofs}. Notice that all the necessary matrices are invertible (cf. Remark~\ref{rmk:choice_W}).
\begin{lemma}\label{lemma:prec_mat}
  The right-preconditioned matrix has the following block structure:
  \begin{equation}\label{eqn:apinv}
    \mathcal{A}_{\gamma_1,\gamma_2} \mathcal{\widetilde P}_{\gamma_1,\gamma_2}^{-1} = \begin{bmatrix}
      \mathsf{I}_n                & 0                                                          & 0               \\
      \mathsf{A_{21} A_{11}^{-1}} & \mathsf{I}_m - \mathsf{A_{21}A_{11}^{-1}A_{12}A_{22}^{-1}} & \mathsf{T_{23}} \\
      \mathsf{C A_{11}^{-1}}      & -\mathsf{C A_{11}^{-1}A_{12}A_{22}^{-1}-C_2 A_{22}^{-1}}   & \mathsf{T_{33}}
    \end{bmatrix}
  \end{equation}
  where
  \par\vspace{0.2cm}
  \noindent
  $\mathsf{T_{23}} = (\mathsf{-A_{21}A_{11}^{-1}C^T-A_{21}A_{11}^{-1}A_{12}A_{22}^{-1}C_2^T})\hat{\mathsf{S}}^{-1}$,
  $\mathsf{T_{33}} = -(\mathsf{C A_{11}^{-1}C^T+C A_{11}^{-1}A_{12}A_{22}^{-1}C_2^T + C_2 A_{22}^{-1}C_2^T})\hat{\mathsf{S}}^{-1}$.

\end{lemma}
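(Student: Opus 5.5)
The plan is a direct block computation: write $\mathcal{\widetilde P}_{\gamma_1,\gamma_2}^{-1}$ explicitly as a $3\times 3$ block matrix and multiply it on the left by $\mathcal{A}_{\gamma_1,\gamma_2}$. The inverse can be read off from the factorization~\eqref{eqn:fact}, provided we first have the inverse of the block upper-triangular matrix $\widetilde{\mathsf{A}}_{\gamma_1,\gamma_2}$. The diagonal blocks $\mathsf{A_{11}}$ and $\mathsf{A_{22}}$ are invertible: $\mathsf{A_{11}}$ is SPD, and $\mathsf{A_{22}}$ is SPD by Remark~\ref{rmk:choice_W}. Hence
\[
\widetilde{\mathsf{A}}_{\gamma_1,\gamma_2}^{-1}=\begin{bmatrix}\mathsf{A_{11}^{-1}} & -\mathsf{A_{11}^{-1}A_{12}A_{22}^{-1}}\\ 0 & \mathsf{A_{22}^{-1}}\end{bmatrix}.
\]

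Next I carry out the product in~\eqref{eqn:fact}. The last two factors combine to $\begin{bmatrix}\mathsf{I} & -\mathsf{B^T}\hat{\mathsf{S}}^{-1}\\ 0 & \hat{\mathsf{S}}^{-1}\end{bmatrix}$. Multiplying by $\operatorname{diag}(\widetilde{\mathsf{A}}_{\gamma_1,\gamma_2}^{-1},\mathsf{I}_\ell)$ and using $\mathsf{B^T}=\begin{bmatrix}\mathsf{C^T}\\ -\mathsf{C_2^T}\end{bmatrix}$ gives
\[
\mathcal{\widetilde P}_{\gamma_1,\gamma_2}^{-1}=\begin{bmatrix}\mathsf{A_{11}^{-1}} & -\mathsf{A_{11}^{-1}A_{12}A_{22}^{-1}} & -(\mathsf{A_{11}^{-1}C^T}+\mathsf{A_{11}^{-1}A_{12}A_{22}^{-1}C_2^T})\hat{\mathsf{S}}^{-1}\\ 0 & \mathsf{A_{22}^{-1}} & \mathsf{A_{22}^{-1}C_2^T}\hat{\mathsf{S}}^{-1}\\ 0&0&\hat{\mathsf{S}}^{-1}\end{bmatrix}.
\]
As a sanity check, this can be verified directly against $\mathcal{\widetilde P}_{\gamma_1,\gamma_2}$ by block back-substitution.

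Then I multiply $\mathcal{A}_{\gamma_1,\gamma_2}$ by this matrix row by row.

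\textbf{First block row} $(\mathsf{A_{11}},\mathsf{A_{12}},\mathsf{C^T})$. It coincides with the first block row of $\mathcal{\widetilde P}_{\gamma_1,\gamma_2}$, so the result is $(\mathsf{I}_n,0,0)$ immediately. Explicitly, the $(1,2)$ entry is $-\mathsf{A_{12}A_{22}^{-1}}+\mathsf{A_{12}A_{22}^{-1}}=0$. In the $(1,3)$ entry, the terms $\mathsf{C^T}\hat{\mathsf{S}}^{-1}$ and $\mathsf{A_{12}A_{22}^{-1}C_2^T}\hat{\mathsf{S}}^{-1}$ each cancel against their counterparts.

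\textbf{Second block row} $(\mathsf{A_{21}},\mathsf{A_{22}},-\mathsf{C_2^T})$.
\begin{itemize}
\item Column 1 gives $\mathsf{A_{21}A_{11}^{-1}}$.
\item Column 2 gives $-\mathsf{A_{21}A_{11}^{-1}A_{12}A_{22}^{-1}}+\mathsf{I}_m$.
\item Column 3 gives $-\mathsf{A_{21}}(\mathsf{A_{11}^{-1}C^T}+\mathsf{A_{11}^{-1}A_{12}A_{22}^{-1}C_2^T})\hat{\mathsf{S}}^{-1}+\mathsf{C_2^T}\hat{\mathsf{S}}^{-1}-\mathsf{C_2^T}\hat{\mathsf{S}}^{-1}$. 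The last two terms cancel, leaving $\mathsf{T_{23}}$.
\end{itemize}

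\textbf{Third block row} $(\mathsf{C},-\mathsf{C_2},0)$.
\begin{itemize}
\item Column 1 gives $\mathsf{CA_{11}^{-1}}$.
\item Column 2 gives $-\mathsf{CA_{11}^{-1}A_{12}A_{22}^{-1}}-\mathsf{C_2A_{22}^{-1}}$.
\item Column 3 gives $-(\mathsf{CA_{11}^{-1}C^T}+\mathsf{CA_{11}^{-1}A_{12}A_{22}^{-1}C_2^T}+\mathsf{C_2A_{22}^{-1}C_2^T})\hat{\mathsf{S}}^{-1}=\mathsf{T_{33}}$.
\end{itemize}

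There is no real obstacle; the only delicate points are bookkeeping. The first is tracking signs coming from $-\mathsf{C_2}$ and $-\hat{\mathsf{S}}^{-1}$. The second is noticing the cancellation of the $\mathsf{C_2^T}\hat{\mathsf{S}}^{-1}$ terms in $\mathsf{T_{23}}$. The invertibility of $\mathsf{A_{22}}$ is the one non-computational ingredient, and it is supplied by Remark~\ref{rmk:choice_W}. Invertibility of $\hat{\mathsf{S}}$ is assumed; it holds, for instance, for the choice $\hat{\mathsf{S}}=-\tfrac{1}{\gamma_1}\mathsf{W}$.
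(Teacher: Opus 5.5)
Your proposal is correct and is essentially the paper's proof: both use the factorization~\eqref{eqn:fact} together with the explicit inverse of the block upper-triangular matrix $\widetilde{\mathsf{A}}_{\gamma_1,\gamma_2}$, and then multiply out the blocks. The only cosmetic difference is the order of expansion. The paper first groups the product at the $2\times 2$ level, computing $\mathsf{A}_{\gamma_1,\gamma_2}\widetilde{\mathsf{A}}_{\gamma_1,\gamma_2}^{-1}$, $\mathsf{B}\widetilde{\mathsf{A}}_{\gamma_1,\gamma_2}^{-1}$ and $-\mathsf{B}\widetilde{\mathsf{A}}_{\gamma_1,\gamma_2}^{-1}\mathsf{B}^{T}\hat{\mathsf{S}}^{-1}$ before expanding, whereas you write $\widetilde{\mathcal{P}}_{\gamma_1,\gamma_2}^{-1}$ out as a full $3\times 3$ block matrix first.
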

\par\vspace{0.3cm}

In order to proceed further, we need the following lemma, whose proof can also be found in Appendix~\ref{sec:appendix_proofs}.
\begin{lemma}\label{lemma:SM}
  The following identity holds:
  \begin{equation}\label{eqn:identity}
    \gamma_1 \mathsf{CA_{11}^{-1}C^TW^{-1}}=\mathsf{I}_\ell-\left( \mathsf{I}_\ell+\gamma_1 \mathsf{C A^{-1}C^TW^{-1}}\right)^{-1}.
  \end{equation}
\end{lemma}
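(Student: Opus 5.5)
The plan is to recognize the identity as an instance of the Sherman--Morrison--Woodbury formula applied to $\mathsf{A_{11}} = \mathsf{A} + \gamma_1 \mathsf{C^T W^{-1} C}$. The operator $\mathsf{A}$ is SPD, and $\mathsf{A_{11}}$ is SPD as well, since it is $\mathsf{A}$ plus a positive semidefinite term. Set $\mathsf{K} \coloneqq \mathsf{C A^{-1} C^T W^{-1}}$. Then $\mathsf{K}$ is similar to $\mathsf{W^{-1/2} C A^{-1} C^T W^{-1/2}}$, which is symmetric positive semidefinite. Hence the spectrum of $\mathsf{I}_\ell + \gamma_1 \mathsf{K}$ lies in $[1,\infty)$, so this matrix is invertible and the right-hand side of~\eqref{eqn:identity} is well defined.

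Rather than quoting Woodbury, I would verify the identity directly. Rewrite it as $\gamma_1 \mathsf{C A_{11}^{-1} C^T W^{-1}}\,(\mathsf{I}_\ell + \gamma_1 \mathsf{K}) = \gamma_1 \mathsf{K}$, which follows by multiplying both sides on the right by $\mathsf{I}_\ell + \gamma_1\mathsf{K}$. This reformulation is equivalent to~\eqref{eqn:identity} because $\mathsf{I}_\ell-(\mathsf{I}_\ell+\gamma_1\mathsf{K})^{-1} = \gamma_1\mathsf{K}(\mathsf{I}_\ell+\gamma_1\mathsf{K})^{-1}$, and $\mathsf{K}$ commutes with $(\mathsf{I}_\ell+\gamma_1\mathsf{K})^{-1}$.

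The key step is the push-through relation
\[
\mathsf{C^T W^{-1}}(\mathsf{I}_\ell + \gamma_1 \mathsf{C A^{-1} C^T W^{-1}}) = (\mathsf{I}_n + \gamma_1 \mathsf{C^T W^{-1} C A^{-1}})\,\mathsf{C^T W^{-1}} = \mathsf{A_{11} A^{-1} C^T W^{-1}}.
\]
The last equality uses $\mathsf{I}_n + \gamma_1 \mathsf{C^T W^{-1} C A^{-1}} = (\mathsf{A} + \gamma_1 \mathsf{C^T W^{-1}C})\mathsf{A^{-1}}$. Multiplying on the left by $\gamma_1\mathsf{C A_{11}^{-1}}$ gives
\[
\gamma_1 \mathsf{C A_{11}^{-1} C^T W^{-1}}(\mathsf{I}_\ell+\gamma_1\mathsf{K}) = \gamma_1 \mathsf{C A^{-1} C^T W^{-1}} = \gamma_1\mathsf{K},
\]
which is exactly the rewritten identity. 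Right-multiplying by $(\mathsf{I}_\ell+\gamma_1\mathsf{K})^{-1}$ and using the relation above then yields~\eqref{eqn:identity}.

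There is no real obstacle here. The only point needing care is the invertibility of $\mathsf{I}_\ell+\gamma_1\mathsf{K}$, which the similarity argument settles. The identity does not depend on the choice $\mathsf{W}=\mathsf{M^2}$; it only requires $\mathsf{W}$ to be SPD.
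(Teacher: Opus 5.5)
Your proof is correct, but it takes a different route from the paper. The paper applies the Sherman--Morrison--Woodbury formula to get $\mathsf{A_{11}^{-1}} = \mathsf{A^{-1}} - \mathsf{A^{-1}C^T}(\gamma_1^{-1}\mathsf{W}+\mathsf{CA^{-1}C^T})^{-1}\mathsf{CA^{-1}}$. It then uses a chain of add-and-subtract steps to rewrite $\mathsf{CA_{11}^{-1}C^T}$ as $\gamma_1^{-1}\mathsf{W} - \gamma_1^{-1}\mathsf{W}(\gamma_1^{-1}\mathsf{W}+\mathsf{CA^{-1}C^T})^{-1}\gamma_1^{-1}\mathsf{W}$. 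Only at the end does it right-multiply by $\gamma_1\mathsf{W^{-1}}$ and fold the result into $\mathsf{I}_\ell-(\mathsf{I}_\ell+\gamma_1\mathsf{CA^{-1}C^TW^{-1}})^{-1}$.

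You never expand $\mathsf{A_{11}^{-1}}$. The push-through relation $\mathsf{C^TW^{-1}}(\mathsf{I}_\ell+\gamma_1\mathsf{K}) = \mathsf{A_{11}A^{-1}C^TW^{-1}}$, which is the mechanism behind Woodbury itself, lets you verify the identity with a single multiplication. This is shorter and less error-prone. Your invertibility argument is also cleaner and slightly more general: the similarity of $\mathsf{K}$ to $\mathsf{W^{-1/2}CA^{-1}C^TW^{-1/2}}$ shows that $\mathsf{I}_\ell+\gamma_1\mathsf{K}$ is invertible without the full-row-rank assumption on $\mathsf{C}$ that the paper invokes.

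What the paper's route buys is reliance on a standard named identity and, as a by-product, an explicit symmetric formula for $\mathsf{CA_{11}^{-1}C^T}$ in terms of $\mathsf{W}$ and $\mathsf{CA^{-1}C^T}$.

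One minor point: your last step does not need $\mathsf{K}$ to commute with $(\mathsf{I}_\ell+\gamma_1\mathsf{K})^{-1}$. Writing $\gamma_1\mathsf{K} = (\mathsf{I}_\ell+\gamma_1\mathsf{K})-\mathsf{I}_\ell$ gives $\gamma_1\mathsf{K}(\mathsf{I}_\ell+\gamma_1\mathsf{K})^{-1} = \mathsf{I}_\ell-(\mathsf{I}_\ell+\gamma_1\mathsf{K})^{-1}$ directly.
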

Applying~\eqref{eqn:identity} to the preconditioned matrix in~\eqref{eqn:apinv}, we obtain the following result, whose proof is given in Appendix~\ref{sec:appendix_proofs}.
\begin{lemma}\label{lemma:dfge}
  Letting
  \begin{align*}
    \mathsf{D} & = \mathsf{\gamma_2 C_2^T W^{-1}}\bigl(\mathsf{I}_\ell-\bigl(\mathsf{I}_\ell+\gamma_1 \mathsf{C A^{-1} C^T W^{-1}}\bigr)^{-1}\bigr), \\[4pt]
    \mathsf{E} & = \mathsf{C_2 A_{22}^{-1}},                                                                                                         \\[4pt]
    \mathsf{G} & = \mathsf{I}_\ell - \gamma_1 \mathsf{C_2 A_{22}^{-1} C_2^T W^{-1}},                                                                 \\[4pt]
    \mathsf{F} & = \bigl(\mathsf{I}_\ell + \gamma_1 \mathsf{C A^{-1} C^T W^{-1}}\bigr)^{-1},
  \end{align*}
  the right-preconditioned matrix can be written as:
  \begin{equation}\label{eqn_apinvdefg}
    \mathcal{A}_{\gamma_1,\gamma_2} \mathcal{\widetilde P}_{\gamma_1,\gamma_2}^{-1} = \begin{bmatrix}
      \mathsf{I}_n                & 0                        & 0                                                                                                                 \\
      \mathsf{A_{21} A_{11}^{-1}} & \mathsf{I}_m-\mathsf{DE} & \mathsf{DG\left(\frac{1}{\gamma_1}W\right)}\hat{\mathsf{S}}^{\mathsf{-1}}                                         \\
      \mathsf{C A_{11}^{-1}}      & -\mathsf{FE}             & \left(\mathsf{I}_\ell-\mathsf{FG}\right) \mathsf{\left(-\frac{1}{\gamma_1}W\right)}\hat{\mathsf{S}}^{\mathsf{-1}}
    \end{bmatrix}.
  \end{equation}
\end{lemma}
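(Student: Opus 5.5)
The plan is to start from the block form of the right-preconditioned matrix given in Lemma~\ref{lemma:prec_mat} and rewrite each entry of the second and third block rows using the identity of Lemma~\ref{lemma:SM}. The first block row and the first block column need no change. Only the $(2,2)$, $(2,3)$, $(3,2)$ and $(3,3)$ blocks have to be matched with $\mathsf{I}_m-\mathsf{DE}$, $\mathsf{DG}(\frac{1}{\gamma_1}\mathsf{W})\hat{\mathsf{S}}^{-1}$, $-\mathsf{FE}$ and $(\mathsf{I}_\ell-\mathsf{FG})(-\frac{1}{\gamma_1}\mathsf{W})\hat{\mathsf{S}}^{-1}$.

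The key observation is that the off-diagonal augmented blocks factor through $\mathsf{C}$ and $\mathsf{C_2}$:
\[
\mathsf{A_{21}}=-\gamma_2\mathsf{C_2^TW^{-1}C},\qquad \mathsf{A_{12}}=-\gamma_1\mathsf{C^TW^{-1}C_2}.
\]
Hence every product involving $\mathsf{A_{11}^{-1}}$ reduces to $\mathsf{CA_{11}^{-1}C^T}$, sandwiched between factors of $\mathsf{C_2}$, $\mathsf{W^{-1}}$ and $\mathsf{A_{22}^{-1}}$. By Lemma~\ref{lemma:SM},
\[
\gamma_1\mathsf{CA_{11}^{-1}C^TW^{-1}}=\mathsf{I}_\ell-\mathsf{F}.
\]

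For the $(2,2)$ block I compute
\[
\mathsf{A_{21}A_{11}^{-1}A_{12}A_{22}^{-1}}=\gamma_2\mathsf{C_2^TW^{-1}}\bigl(\gamma_1\mathsf{CA_{11}^{-1}C^TW^{-1}}\bigr)\mathsf{C_2A_{22}^{-1}}=\mathsf{DE}.
\]
For the $(3,2)$ block I get
\[
-\mathsf{CA_{11}^{-1}A_{12}A_{22}^{-1}}-\mathsf{C_2A_{22}^{-1}}=\bigl((\mathsf{I}_\ell-\mathsf{F})-\mathsf{I}_\ell\bigr)\mathsf{E}=-\mathsf{FE}.
\]

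For $\mathsf{T_{23}}$ I insert $\mathsf{W^{-1}}\cdot\frac{1}{\gamma_1}\mathsf{W}$ to the right of $\mathsf{C^T}$ and collect terms. This gives
\[
\mathsf{T_{23}}=-\gamma_2\mathsf{C_2^TW^{-1}}(\mathsf{I}_\ell-\mathsf{F})\bigl(-\mathsf{I}_\ell+\gamma_1\mathsf{C_2A_{22}^{-1}C_2^TW^{-1}}\bigr)\tfrac{1}{\gamma_1}\mathsf{W}\hat{\mathsf{S}}^{-1},
\]
which equals $\mathsf{DG}(\frac{1}{\gamma_1}\mathsf{W})\hat{\mathsf{S}}^{-1}$. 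The sign bookkeeping works because $\mathsf{A_{21}}$ contributes a minus and the bracket equals $-\mathsf{G}$.

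The step I expect to need the most care is $\mathsf{T_{33}}$. I factor out $-\frac{1}{\gamma_1}\mathsf{W}\hat{\mathsf{S}}^{-1}$ on the right, which turns the bracket into
\[
\gamma_1\mathsf{CA_{11}^{-1}C^TW^{-1}}-(\mathsf{I}_\ell-\mathsf{F})\gamma_1\mathsf{C_2A_{22}^{-1}C_2^TW^{-1}}+\gamma_1\mathsf{C_2A_{22}^{-1}C_2^TW^{-1}}.
\]
This simplifies to
\[
(\mathsf{I}_\ell-\mathsf{F})+\mathsf{F}(\mathsf{I}_\ell-\mathsf{G})=\mathsf{I}_\ell-\mathsf{FG},
\]
as required. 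Throughout, the only facts used are the invertibility of $\mathsf{A_{11}}$, $\mathsf{A_{22}}$ and $\mathsf{W}$ (Remark~\ref{rmk:choice_W}) and Lemma~\ref{lemma:SM}; no assumption on $\hat{\mathsf{S}}$ is needed beyond its invertibility.
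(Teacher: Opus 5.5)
Your proposal is correct and follows essentially the same route as the paper's appendix proof. Both start from Lemma~\ref{lemma:prec_mat}, substitute $\mathsf{A_{21}}=-\gamma_2\mathsf{C_2^TW^{-1}C}$ and $\mathsf{A_{12}}=-\gamma_1\mathsf{C^TW^{-1}C_2}$, and apply the identity of Lemma~\ref{lemma:SM} block by block; your handling of $\mathsf{T_{23}}$ and $\mathsf{T_{33}}$ (inserting $\mathsf{W^{-1}}\cdot\frac{1}{\gamma_1}\mathsf{W}$ and collecting into $\mathsf{I}_\ell-\mathsf{FG}$) is only a more compact bookkeeping of the paper's computation, which writes the $(3,3)$ bracket as $\mathsf{I}_\ell-(\mathsf{I}_\ell-\gamma_1\mathsf{CA_{11}^{-1}C^TW^{-1}})(\mathsf{I}_\ell-\gamma_1\mathsf{C_2A_{22}^{-1}C_2^TW^{-1}})$, and all your signs check out.
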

The foregoing lemma suggests the choice $\hat{\mathsf{S}}^{\mathsf{-1}}=-\mathsf{\gamma_1 W^{-1}}$. This leads to the following theorem, which provides
a characterization of the spectrum of the preconditioned system with the modified variant, and resembles the findings obtained in~\cite{spectral_analysis_modified}.

\begin{theorem}[Spectrum of preconditioned system with modified variant]\label{thm:nonunit_modified}
  Let $\hat{\mathsf{S}}^{\mathsf{-1}}=-\mathsf{\gamma_1 W^{-1}}$. We obtain
  \begin{equation}\label{eqn:apinvdefg}
    \mathcal{A}_{\gamma_1,\gamma_2} \mathcal{\widetilde P}_{\gamma_1,\gamma_2}^{-1} = \begin{bmatrix}
      \mathsf{I}_n                & 0                        & 0                           \\
      \mathsf{A_{21} A_{11}^{-1}} & \mathsf{I}_m-\mathsf{DE} & \mathsf{-DG}                \\
      \mathsf{C A_{11}^{-1}}      & -\mathsf{FE}             & \mathsf{I}_\ell-\mathsf{FG}
    \end{bmatrix}.
  \end{equation}
  The matrix $\mathcal{A}_{\gamma_1,\gamma_2} \mathcal{\widetilde P}_{\gamma_1,\gamma_2}^{-1}$ has $1$ as an eigenvalue of algebraic multiplicity at least $n+m$. The remaining eigenvalues are the non-unit eigenvalues of the matrix
  \begin{equation}\label{eqn:lasteig}
    \begin{bmatrix}
      \mathsf{I}_m - \mathsf{DE} & \mathsf{-DG}                \\
      \mathsf{-FE}               & \mathsf{I}_\ell-\mathsf{FG}
    \end{bmatrix}.
  \end{equation}
\end{theorem}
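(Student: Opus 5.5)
The plan is to substitute $\hat{\mathsf{S}}^{-1}=-\gamma_1\mathsf{W}^{-1}$ directly into the block form of Lemma~\ref{lemma:dfge} and then read off the spectrum from the block lower-triangular structure.

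\textbf{Step 1: the substitution.} With this choice we have $\bigl(\tfrac{1}{\gamma_1}\mathsf{W}\bigr)\hat{\mathsf{S}}^{-1}=-\mathsf{I}_\ell$ and $\bigl(-\tfrac{1}{\gamma_1}\mathsf{W}\bigr)\hat{\mathsf{S}}^{-1}=\mathsf{I}_\ell$. Consequently:
\begin{itemize}
\item the $(2,3)$ block $\mathsf{DG}\bigl(\tfrac{1}{\gamma_1}\mathsf{W}\bigr)\hat{\mathsf{S}}^{-1}$ becomes $-\mathsf{DG}$;
\item the $(3,3)$ block $(\mathsf{I}_\ell-\mathsf{FG})\bigl(-\tfrac{1}{\gamma_1}\mathsf{W}\bigr)\hat{\mathsf{S}}^{-1}$ becomes $\mathsf{I}_\ell-\mathsf{FG}$;
\item the first block column and the remaining blocks are untouched, since they do not involve $\hat{\mathsf{S}}$.
\end{itemize}
This gives the displayed matrix~\eqref{eqn:apinvdefg} immediately. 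Lemma~\ref{lemma:dfge} already carries the real algebraic content, namely rewriting $\mathsf{T_{23}}$ and $\mathsf{T_{33}}$ through the identity of Lemma~\ref{lemma:SM}, so nothing else is needed here.

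\textbf{Step 2: the spectrum.} The first block row of~\eqref{eqn:apinvdefg} is $[\mathsf{I}_n\;0\;0]$, so the matrix is block lower triangular with respect to the splitting $\{n\}\oplus\{m+\ell\}$. Its diagonal blocks are $\mathsf{I}_n$ and the $(m+\ell)\times(m+\ell)$ matrix $\mathsf{K}$ in~\eqref{eqn:lasteig}. Expanding the determinant along this structure gives
\[
\det\bigl(\nu\mathsf{I}-\mathcal{A}_{\gamma_1,\gamma_2}\widetilde{\mathcal{P}}_{\gamma_1,\gamma_2}^{-1}\bigr)=(\nu-1)^n\det(\nu\mathsf{I}-\mathsf{K}).
\]
Hence the spectrum is $\{1\}$ with multiplicity $n$, together with the spectrum of $\mathsf{K}$, and the non-unit eigenvalues are exactly the non-unit eigenvalues of $\mathsf{K}$.

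\textbf{Step 3: the additional multiplicity $m$.} This is the main obstacle: showing that $\mathsf{K}$ contributes at least $m$ further unit eigenvalues. We have $\mathsf{K}-\mathsf{I}=-\begin{bmatrix}\mathsf{D}\\ \mathsf{F}\end{bmatrix}\begin{bmatrix}\mathsf{E} & \mathsf{G}\end{bmatrix}$, and the left factor has only $\ell$ columns. So $\mathsf{K}-\mathsf{I}$ has rank at most $\ell$, and $\ker(\mathsf{K}-\mathsf{I})$ has dimension at least $m$.

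Geometric multiplicity bounds algebraic multiplicity, so $1$ is an eigenvalue of $\mathsf{K}$ with algebraic multiplicity at least $m$. (Equivalently, via Sylvester's determinant identity, $\det(\nu\mathsf{I}-\mathsf{K})=(\nu-1)^{m}\det\bigl((\nu-1)\mathsf{I}_\ell+\mathsf{ED}+\mathsf{GF}\bigr)$.) Combined with the $n$ unit eigenvalues from the $\mathsf{I}_n$ block, this gives a total algebraic multiplicity of at least $n+m$.

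Finally, since right and left preconditioning are similar via $\widetilde{\mathcal{P}}_{\gamma_1,\gamma_2}$, the statement transfers to $\widetilde{\mathcal{P}}^{-1}_{\gamma_1,\gamma_2}\mathcal{A}_{\gamma_1,\gamma_2}$ if needed.
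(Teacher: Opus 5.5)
Your proof is correct and follows the paper's route. You substitute $\hat{\mathsf S}^{-1}=-\gamma_1\mathsf W^{-1}$ into Lemma~\ref{lemma:dfge}, use the block lower-triangular structure to split off the $n$ unit eigenvalues, and use the rank-$\ell$ factorization $\begin{bmatrix}\mathsf D\\ \mathsf F\end{bmatrix}\begin{bmatrix}\mathsf E & \mathsf G\end{bmatrix}$ to obtain $m$ further unit eigenvalues. Your note that $\dim\ker(\mathsf K-\mathsf I)\ge m$ bounds the geometric, hence algebraic, multiplicity (equivalently, the Sylvester determinant factorization) makes explicit a step the paper only asserts.
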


\vspace{0.2cm}
\begin{proof}
  Equality~\eqref{eqn:apinvdefg} immediately follows from~\eqref{eqn_apinvdefg} by substitution. Furthermore, the spectrum of $\mathcal{A}_{\gamma_1,\gamma_2} \widetilde{\mathcal{P}}_{\mathsf{\gamma_1,\gamma_2}}^{-1}$ consists of the eigenvalue $1$ of multiplicity $n$, plus the spectrum of the matrix
  $$\begin{bmatrix}
      \mathsf{I}_m - \mathsf{DE} & \mathsf{-DG}                \\
      \mathsf{-FE}               & \mathsf{I}_\ell-\mathsf{FG}
    \end{bmatrix}=
    \begin{bmatrix}
      \mathsf{I}_m & 0               \\
      0            & \mathsf{I}_\ell
    \end{bmatrix}-
    \begin{bmatrix}
      \mathsf{DE} & \mathsf{DG} \\
      \mathsf{FE} & \mathsf{FG}
    \end{bmatrix}.$$
  Observing that
  \begin{equation}\label{eqn:smallblock_DEDG}
    \begin{bmatrix}
      \mathsf{DE} & \mathsf{DG} \\
      \mathsf{FE} & \mathsf{FG}
    \end{bmatrix}=
    \begin{bmatrix}
      \mathsf{D} \\
      \mathsf{F}
    \end{bmatrix} \begin{bmatrix}
      \mathsf{E} &
      \mathsf{G}
    \end{bmatrix},
  \end{equation}
  we have
  $$\text{rank}\begin{bmatrix}
      \mathsf{DE} & \mathsf{DG} \\
      \mathsf{FE} & \mathsf{FG}
    \end{bmatrix}\le \min \left( \text{rank}
    \begin{bmatrix}
      \mathsf{D} \\
      \mathsf{F}
    \end{bmatrix}, \text{rank} \begin{bmatrix}
      \mathsf{E} &
      \mathsf{G}
    \end{bmatrix}\right) \le \ell,$$
  because $\begin{bmatrix}
      \mathsf{D} \\
      \mathsf{F}
    \end{bmatrix} \in \mathbb{R}^{(m+\ell)\times \ell}$ and $\begin{bmatrix}
      \mathsf{E} &
      \mathsf{G}
    \end{bmatrix} \in \mathbb{R}^{\ell \times (m+\ell)}$. This implies that the matrix~\eqref{eqn:lasteig} has the eigenvalue $1$ of multiplicity at least $m$. Therefore, $\mathcal{A}_{\gamma_1,\gamma_2} \widetilde{\mathcal{P}}_{\mathsf{\gamma_1,\gamma_2}}^{-1}$ has $1$ as an eigenvalue of algebraic multiplicity at least $n+m$.
\end{proof}
\vspace{0.2cm}
 Unlike the ideal AL preconditioner supported by rigorous mesh-independent spectral bounds in Section~\ref{sec:spectral_ideal}, the analysis of the modified variant focuses on asymptotic spectral behavior with respect to the augmentation parameters. The mesh-independent behavior of the practical modified variant is supported by the numerical experiments in Section~\ref{sec:numerical_experiments}.

\subsection{Asymptotic behavior of remaining eigenvalues}
Similar to the case of the ideal preconditioner developed in Section~\ref{sec:AL_prec}, we set $\mathsf{W} = \mathsf{M}^2$. In this case, some observations can be made regarding the asymptotic behavior of the remaining $\ell$ eigenvalues of matrix~\eqref{eqn:lasteig} that differ from $1$. So far, our analysis has considered the general case $V_{2,h} \not\equiv \Lambda_h$. We now specialize to the case $V_{2,h} \equiv \Lambda_h$ to gain further insight into the eigenvalue clustering of the modified approach in a simplified setting of practical interest that allows for a more explicit analysis. Hence, we have $\mathsf{C_2} = \mathsf{M}$, and consequently $\mathsf{C_2^T W^{-1} C_2} = \mathsf{I}_m.$ The spectral analysis below is therefore stated for this algebraic setting. We omit the subscript in the identity matrix, since $m=\ell$. Note that the numerical experiments reported in Section~\ref{subsec:diff_fem} indicate that the qualitative behavior predicted by the simplified analysis carries over in practice also to the more general case $V_{2,h}\not\equiv\Lambda_h$.

As shown in Section~\ref{sec:spectral_ideal}, for the ideal AL preconditioner all the eigenvalues of the preconditioned matrix cluster towards $1$ as $\gamma \!\rightarrow\! \infty$. This is not generally the case for the modified variants~\eqref{eqn:mod_gammaid} and~\eqref{eqn:P_AL_mod_gamma_12}. However, the following result holds for the latter.

\begin{theorem}[Spectral convergence of the non-zero eigenvalues of the matrix~\eqref{eqn:smallblock_DEDG}]\label{thm:spectral_conv}
  Let $\mathsf{W} = \mathsf{M}^2$, and let $V_h$, $V_{2,h}$, and $\Lambda_h$ be continuous Lagrangian finite element spaces. Then, the reciprocals of the non-zero eigenvalues of the matrix~\eqref{eqn:smallblock_DEDG} converge to the eigenvalues of $\mathsf{-LA_2}$ as $\gamma_1 \rightarrow \infty$ and $\gamma_2 \rightarrow 0$, where $\mathsf{L\coloneqq M^{-1} C A^{-1} C^T M^{-1}}$.
\end{theorem}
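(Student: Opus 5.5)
The plan is to reduce the $(m+\ell)\times(m+\ell)$ matrix in~\eqref{eqn:smallblock_DEDG} to an $\ell\times\ell$ matrix, write that matrix explicitly in the setting $\mathsf{C_2}=\mathsf{M}$, $\mathsf{W}=\mathsf{M}^2$, and pass to the limit in a factored form of its inverse.

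\textbf{Step 1 (reduction to an $\ell\times\ell$ matrix).} By the factorization~\eqref{eqn:smallblock_DEDG}, the matrix is $\begin{bmatrix}\mathsf{D}\\ \mathsf{F}\end{bmatrix}\begin{bmatrix}\mathsf{E} & \mathsf{G}\end{bmatrix}$. Its non-zero eigenvalues therefore coincide, with multiplicities, with those of the $\ell\times\ell$ product $\mathsf{K}\coloneqq\mathsf{ED}+\mathsf{GF}$. This follows from the standard fact that $\mathsf{XY}$ and $\mathsf{YX}$ have the same non-zero spectrum.

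\textbf{Step 2 (explicit form of $\mathsf{K}$).} With $\mathsf{C_2}=\mathsf{M}$ and $\mathsf{W}=\mathsf{M}^2$ we have $\mathsf{A_{22}}=\mathsf{A_2}+\gamma_2\mathsf{I}$, and the four blocks of Lemma~\ref{lemma:dfge} become
\[
\mathsf{D}=\gamma_2\mathsf{M^{-1}}(\mathsf{I}-\mathsf{F}),\qquad
\mathsf{E}=\mathsf{M}(\mathsf{A_2}+\gamma_2\mathsf{I})^{-1},\qquad
\mathsf{G}=\mathsf{I}-\gamma_1\mathsf{M}(\mathsf{A_2}+\gamma_2\mathsf{I})^{-1}\mathsf{M^{-1}}.
\]
Conjugating by $\mathsf{M}$, i.e.\ forming $\mathsf{M^{-1}KM}$, does not change the spectrum. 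It replaces $\mathsf{F}$ by
\[
\mathsf{F}'\coloneqq\mathsf{M^{-1}FM}=\bigl(\mathsf{I}+\gamma_1\mathsf{M^{-1}CA^{-1}C^TM^{-1}}\bigr)^{-1}=(\mathsf{I}+\gamma_1\mathsf{L})^{-1}.
\]
Writing $\mathsf{R}\coloneqq(\mathsf{A_2}+\gamma_2\mathsf{I})^{-1}$, the conjugated matrix is
\[
\mathsf{K}'=\gamma_2\mathsf{R}(\mathsf{I}-\mathsf{F}')+\mathsf{F}'-\gamma_1\mathsf{R}\mathsf{F}'
=-\mathsf{R}\,\mathsf{N},\qquad
\mathsf{N}\coloneqq\gamma_1\mathsf{F}'-\gamma_2(\mathsf{I}-\mathsf{F}')-(\mathsf{A_2}+\gamma_2\mathsf{I})\mathsf{F}'.
\]

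\textbf{Step 3 (limit of the middle factor).} The matrix $\mathsf{L}$ is SPD, because $\mathsf{A}$ is SPD and $\mathsf{C}$ has full row rank by the inf-sup condition. Hence, as $\gamma_1\to\infty$, we have $\mathsf{F}'=(\mathsf{I}+\gamma_1\mathsf{L})^{-1}\to0$ and $\gamma_1\mathsf{F}'=(\gamma_1^{-1}\mathsf{I}+\mathsf{L})^{-1}\to\mathsf{L}^{-1}$. Letting also $\gamma_2\to0$, every remaining term of $\mathsf{N}$ tends to zero, so $\mathsf{N}\to\mathsf{L}^{-1}$. Since the limit is invertible, $\mathsf{N}$ is invertible for $\gamma_1$ large and $\gamma_2$ small. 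Then $\mathsf{K}'$ is invertible, all its eigenvalues are non-zero, and
\[
\mathsf{K}'^{-1}=-\mathsf{N}^{-1}(\mathsf{A_2}+\gamma_2\mathsf{I})\;\longrightarrow\;-\mathsf{L}\mathsf{A_2}.
\]
Continuity of eigenvalues with respect to matrix entries then shows that the reciprocals of the eigenvalues of $\mathsf{K}$ converge to those of $-\mathsf{LA_2}$, as claimed.

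\textbf{Main obstacle.} The difficulty is that $\mathsf{K}$ itself has no finite limit. As $\gamma_2\to0$, the factor $(\mathsf{A_2}+\gamma_2\mathsf{I})^{-1}$ blows up on $\ker(\mathsf{A_2})=\operatorname{span}\{\mathsf{1}\}$. This is exactly why the statement is phrased in terms of reciprocals: one eigenvalue of $\mathsf{K}$ diverges and corresponds to the zero eigenvalue of $-\mathsf{LA_2}$. The key is therefore the factorization $\mathsf{K}'=-\mathsf{R}\mathsf{N}$, which isolates the singular factor so that it appears in $\mathsf{K}'^{-1}$ only through the harmless $(\mathsf{A_2}+\gamma_2\mathsf{I})$.

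Care is also needed with the joint limit. The convergence $\mathsf{N}\to\mathsf{L}^{-1}$ holds uniformly in $\gamma_2\in[0,1]$, since $\mathsf{A_2}+\gamma_2\mathsf{I}$ stays bounded there. So no relation between the rates of $\gamma_1\to\infty$ and $\gamma_2\to0$ is required.
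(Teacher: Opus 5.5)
Your proposal is correct and follows essentially the same route as the paper: reduce to $\mathsf{ED}+\mathsf{GF}$ via the $\mathsf{XY}$ versus $\mathsf{YX}$ argument, conjugate by $\mathsf{M}$, invert, pass to the limit, and conclude by continuity of eigenvalues. Your $\mathsf{K}'^{-1}=-\mathsf{N}^{-1}(\mathsf{A_2}+\gamma_2\mathsf{I})$ is exactly the matrix the paper calls $\mathsf{N}$, and the factorization $\mathsf{K}'=-\mathsf{R}\mathsf{N}$ with $\mathsf{N}\to\mathsf{L}^{-1}$ neatly makes explicit two points the paper leaves implicit: that $\mathsf{K}'$ is invertible near the limit, and that no rate condition is needed between $\gamma_1\to\infty$ and $\gamma_2\to 0$.
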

\begin{proof}
  By choosing $\mathsf{W} = \mathsf{M}^2$ and selecting the finite element spaces so that $\mathsf{C}_2 = \mathsf{M}$, the matrices $\mathsf{D}$, $\mathsf{E}$, $\mathsf{G}$, and $\mathsf{F}$ introduced in Lemma~\ref{lemma:dfge} can be rewritten as follows:
  \begin{align*}
    \mathsf{D} & = \gamma_2 \mathsf{M^{-1}}\bigl(\mathsf{I}-\bigl(\mathsf{I}+\gamma_1 \mathsf{C A^{-1} C^T M^{-2}}\bigr)^{-1}\bigr), \\[4pt]
    \mathsf{E} & = \mathsf{M A_{22}^{-1}},                                                                                           \\[4pt]
    \mathsf{G} & = \mathsf{I} - \gamma_1 \mathsf{M A_{22}^{-1} M^{-1}},                                                              \\[4pt]
    \mathsf{F} & = \bigl(\mathsf{I} + \gamma_1 \mathsf{C A^{-1} C^T M^{-2}}\bigr)^{-1}.
  \end{align*}
  Note that all the necessary inverses exist. As observed in Theorem~\ref{thm:nonunit_modified}, the matrix~\eqref{eqn:smallblock_DEDG} has rank at most $\ell$, and its non-zero eigenvalues coincide with those of the smaller matrix
  $$\begin{bmatrix}
      \mathsf{E} &
      \mathsf{G}
    \end{bmatrix}\begin{bmatrix}
      \mathsf{D} \\
      \mathsf{F}
    \end{bmatrix} = \mathsf{ED + GF}.$$
  Substituting the expressions for $\mathsf{D}$, $\mathsf{E}$, $\mathsf{G}$, and $\mathsf{F}$ yields:
  {\small$$\mathsf{ED + GF} = \gamma_2 \mathsf{M A_{22}^{-1}} \mathsf{M^{-1}} \bigl(\mathsf{I}-\bigl(\mathsf{I}+\gamma_1 \mathsf{C A^{-1} C^T M^{-2}}\bigr)^{-1}\bigr) + \bigl(\mathsf{I} - \gamma_1 \mathsf{M A_{22}^{-1} M^{-1}}\bigr) \bigl(\mathsf{I} + \gamma_1 \mathsf{C A^{-1} C^T M^{-2}}\bigr)^{-1}.$$}
  Using the fact that $\mathsf{A_{22}=A_2 + \gamma_2 I}$ and performing some simple algebra, we obtain:
  {\small$$\mathsf{ED + GF} = \gamma_2 \mathsf{M \bigl(A_2 + \gamma_2 I \bigr)^{-1}} \mathsf{ M^{-1}} -  \bigl(\gamma_1+\gamma_2 \bigr) \mathsf{M \bigl(A_2 + \gamma_2 I \bigr)^{-1}} \mathsf{ M^{-1}} \bigl(\mathsf{I}+\gamma_1 \mathsf{C A^{-1} C^T M^{-2}}\bigr)^{-1} + \bigl(\mathsf{I}+\gamma_1 \mathsf{C A^{-1} C^T M^{-2}}\bigr)^{-1}.$$}
  Note that
    {\small$$\bigl(\mathsf{I}+\gamma_1 \mathsf{C A^{-1} C^T M^{-2}}\bigr)^{-1} = \bigl( \mathsf{M}\bigl(\mathsf{I}+\gamma_1 \mathsf{M^{-1}C A^{-1} C^T M^{-1}}\bigr)\mathsf{M^{-1}}\bigr)^{-1}= \mathsf{M}\bigl(\mathsf{I}+\gamma_1 \mathsf{M^{-1}C A^{-1} C^T M^{-1}}\bigr)^{-1}\mathsf{M^{-1}},$$}
  and substituting into the expression for $\mathsf{ED + GF}$ gives:
  {\small$$\mathsf{ED + GF} = \mathsf{M}\Bigl(\gamma_2 \bigl(\mathsf{A_2 + \gamma_2 I} \bigr)^{-1} -  \bigl(\gamma_1+\gamma_2 \bigr) \mathsf{\bigl(A_2 + \gamma_2 I \bigr)^{-1}}  \bigl(\mathsf{I}+\gamma_1 \mathsf{M^{-1}} \mathsf{C A^{-1} C^T M^{-1}}\bigr)^{-1} + \bigl(\mathsf{I}+\gamma_1 \mathsf{M^{-1}} \mathsf{C A^{-1} C^T M^{-1}}\bigr)^{-1} \Bigr) \mathsf{M}^{-1}.$$}
  Therefore, we can conclude that $\mathsf{ED + GF}$ is similar to the matrix
    {\small$$ \gamma_2 \bigl(\mathsf{A_2 + \gamma_2 I} \bigr)^{-1} -  \bigl(\gamma_1+\gamma_2 \bigr) \mathsf{\bigl(A_2 + \gamma_2 I \bigr)^{-1}}  \bigl(\mathsf{I}+\gamma_1 \mathsf{M^{-1}C A^{-1} C^T M^{-1}}\bigr)^{-1} + \bigl(\mathsf{I}+\gamma_1 \mathsf{M^{-1}C A^{-1} C^T M^{-1}}\bigr)^{-1}.$$}
  For the arguments that follow, it is convenient to consider the inverse of this matrix. By performing straightforward algebraic manipulations, we obtain that this inverse can be expressed as:
  $$\mathsf{N}\coloneqq\left(\mathsf{L + \frac{1}{\gamma_1} I} \right) \left(\gamma_2 \left(\mathsf{L + \frac{1}{\gamma_1} I} \right) + \frac{1}{\gamma_1} \bigl(\mathsf{A_2+\gamma_2 I}\bigr) - \frac{\gamma_1 + \gamma_2}{\gamma_1} \>\mathsf{I} \right)^{-1} \bigl(\mathsf{A_2+\gamma_2 I}\bigr),$$
  where $\mathsf{L\coloneqq M^{-1} C A^{-1} C^T M^{-1}}$. It is evident that this matrix converges entrywise to $-\mathsf{L A_2}$ as
  $\gamma_1 \rightarrow \infty$ and $\gamma_2 \rightarrow 0$.

  To conclude the proof, we use the well-known fact that the eigenvalues of a square matrix depend continuously on its entries~\cite{horn}. In other words, if we consider an infinite sequence of matrices $\mathsf{N_k}$ of dimensions $m\times m$, which converges to $\mathsf{-LA_2}$, we can conclude that, among the up to $m!$ possible ways of ordering the eigenvalues of the matrices $\mathsf{N_{k_1}, N_{k_2}}, \dots$ as a $m$-dimensional vector, there exists at least one ordering for each matrix such that the corresponding eigenvalue vectors converge to a vector whose components consist precisely of all the eigenvalues of $\mathsf{-LA_2}$. Since $\mathsf{ED + GF}$ is similar to the inverse of $\mathsf{N}$, this concludes the proof.

\end{proof}

\begin{remark}[Scaling considerations on $\mathsf{LA_2}$]\label{rmk:scalingLA2}
  Notice that $\mathsf{L A_2}$ is the product of the symmetric and positive definite matrix $\mathsf{M^{-1} C A^{-1} C^T M^{-1}}$
  and the symmetric positive semidefinite matrix $\mathsf{A_2}$. Hence, $\mathsf{LA_2}$ is similar to a symmetric positive semidefinite matrix and therefore has real and positive eigenvalues, except for a single zero
  eigenvalue arising from the rank deficiency of $\mathsf{A_2}$.
  Moreover, using well-known scaling properties of FEM matrices, we conclude that the entries of $\mathsf{L}\mathsf{A}_2$ scale as
  $\frac{\beta_2 - \beta}{\beta}\,\bigl(\frac{h_{\Omega_2}}{h_{\Omega}}\bigr)^{d-2}.$
  Notice that the two meshes are in practice chosen so that the ratio $\frac{h_{\Omega_2}}{h_{\Omega}} \in \mathcal{O}(1)$, while the ratio $\frac{\beta_2 - \beta}{\beta}$ is larger than one thanks to our assumptions. Consequently, the spectrum of $\mathsf{LA_2}$ moves away from $1$ as the jump increases.
\end{remark}

Theorem~\ref{thm:spectral_conv} provides insight into the behavior of the non-unit eigenvalues of the preconditioned matrix $\mathcal{A}_{\gamma_1,\gamma_2} \mathcal{\widetilde P}_{\gamma_1,\gamma_2}^{-1}$, which coincide with the non-unit eigenvalues of the matrix~\eqref{eqn:lasteig}.
As observed in Theorem~\ref{thm:nonunit_modified}, the matrix~\eqref{eqn:lasteig} can be expressed as:
$$\begin{bmatrix}
    \mathsf{I} - \mathsf{DE} & \mathsf{-DG}           \\
    \mathsf{-FE}             & \mathsf{I}-\mathsf{FG}
  \end{bmatrix}=
  \begin{bmatrix}
    \mathsf{I} & 0          \\
    0          & \mathsf{I}
  \end{bmatrix}-
  \begin{bmatrix}
    \mathsf{DE} & \mathsf{DG} \\
    \mathsf{FE} & \mathsf{FG}
  \end{bmatrix},$$
implying that its eigenvalues are of the form $1-\nu$, where $\nu$ are the eigenvalues of~\eqref{eqn:smallblock_DEDG}.
From Theorem~\ref{thm:spectral_conv}, it follows that the non-zero eigenvalues of~\eqref{eqn:smallblock_DEDG} tend to become real and negative, and to converge to zero as the jump increases, except for a single outlier that diverges to infinity. This behavior is advantageous, since the resulting eigenvalues $1-\nu$ are tightly clustered around $1$.
This observation also explains the improved iteration counts observed for larger jumps in Section~\ref{sec:numerical_experiments}. Moreover, note that the presence of one outlier does not deteriorate the convergence behavior of GMRES. Indeed, an isolated eigenvalue lying outside the main spectral cluster and having a larger magnitude than the remaining eigenvalues, as in our case, is expected to incur only a modest penalty. By contrast, outliers that lie much closer to the origin may significantly slow down convergence~\cite{trefethen}.

It is also worth noting that a closer examination of the matrix $\mathsf{N}$ sheds light on why it is preferable to choose $\gamma_1 \neq \gamma_2$ and in particular why it is convenient to select $\gamma_1$ large and $\gamma_2$ small, rather than using $\gamma_1 = \gamma_2$, which yields the same configuration analyzed in~\cite{spectral_analysis_modified}. Indeed, if one considers the case $\gamma_1 = \gamma_2$ and lets them tend either to $0$ or to $+\infty$, the structure of the matrix $\mathsf{N}$ shows that certain terms become unbounded.  Hence, the previous conclusions do not hold in this regime.

The limits $\gamma_1\to\infty$ and $\gamma_2\to0$ should be interpreted as an explanation of the observed spectral mechanism, rather than as a practical prescription. In computations, $\gamma_1$ must remain moderate enough to keep the $(1,1)$-block solvable by the chosen inner method, while $\gamma_2$ should remain small but sufficiently far from zero to effectively regularize the singular immersed-domain block. Similarly to the ideal case, fixed finite values of these parameters are in practice sufficient.

\subsection{Numerical test on remaining eigenvalues}
We validate the theoretical results by computing the eigenvalues of the lower two-by-two block \[\begin{bmatrix}\label{eqn:lower_block}
    \mathsf{I}_m - \mathsf{DE} & \mathsf{-DG}                \\
    \mathsf{-FE}               & \mathsf{I}_\ell-\mathsf{FG}
  \end{bmatrix}\]
when $\gamma_1 \!\rightarrow \! \infty$ and $\gamma_2 \! \rightarrow \! 0$. To this aim, we consider the same configuration used in the experiments in Section~\ref{sec:numerical_validation}. The size of the whole lower block associated with such a discretization is $m+\ell = 81+81=162$. As shown in the previous analysis, $\nu=1$ is an eigenvalue of~\eqref{eqn:lasteig} with algebraic multiplicity $m=81$. The spectrum is displayed in Figure~\ref{fig:mod_spectrum_beta_100}.
In addition to the expected $81$ eigenvalues at $1$, the rest of them progressively become real and move away from $0$ as $\gamma_1$ increases and $\gamma_2$ decreases, in agreement with the fact that the spectrum of $\mathsf{-LA_2}$ is real and negative. In the second and third plots the outlier is omitted for visualization reasons; when $\gamma_1 = 10$ and $\gamma_2 = 10^{-2}$, the outlier is real and located at $Re(\nu) \approx 6.7$, whereas for $\gamma_1 = 10^2$ and $\gamma_2 = 10^{-3}$ it is
$Re(\nu)\approx68.6$.

Although we do not report the corresponding plots for larger jumps, the observed trend is that increasing the jump yields a tighter clustering around $1$.
\begin{figure}[h]
  \begin{subfigure}{0.3\textwidth} %
    \centering
    \includegraphics{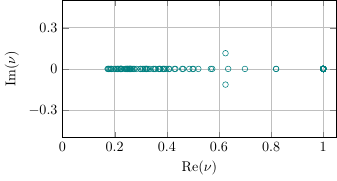}
    \caption*{\hspace{2.0cm} $\gamma_1\! =\! 1$, $\gamma_2\! =\! 10^{-1}$ }
  \end{subfigure}\hspace{1.1cm} %
  \begin{subfigure}{0.3\textwidth}
    \centering
    \includegraphics{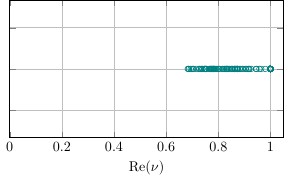}
    \caption*{$\gamma_1\! =\! 10$, $\gamma_2\! =\! 10^{-2}$}
  \end{subfigure}\hspace{0.05cm} %
  \begin{subfigure}{0.3\textwidth}
    \centering
    \includegraphics{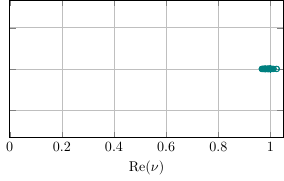}
    \caption*{$\gamma_1\! =\! 10^2$, $\gamma_2\! =\! 10^{-3}$}
  \end{subfigure}
  \vspace{0.1cm}
  \caption{$\beta=1$, $\beta_2 = 100$. Spectrum of the lower block~\eqref{eqn:lower_block} for increasing $\gamma_1$ and decreasing $\gamma_2$.}
  \label{fig:mod_spectrum_beta_100}
\end{figure}

\FloatBarrier
\section{Numerical experiments}
\label{sec:numerical_experiments}

In this section, we present numerical experiments aimed at evaluating the performance of the
proposed augmented Lagrangian preconditioners for the augmented linear system of equations~\eqref{eqn:ALmatrix_mod}, stemming from the finite element
discretization of Problem~\eqref{prob:elliptic_interface}.
All the experiments have been performed using the \textsc{C++} finite element library \textsc{deal.II}~\cite{dealII95,dealIIdesign} and
are available at the dedicated GitHub repository provided by the authors\footnote{Available at \url{https://github.com/fdrmrc/fictitious_domain_AL_preconditioners}.}. All computations have been carried out on a laptop equipped with an AMD Ryzen 9 PRO 7940HS processor and 64 GB of RAM, running Ubuntu 22.04 LTS.

Throughout the following examples, unless otherwise stated, we employ $\mathcal{Q}^1$ Lagrangian finite elements for all finite element spaces $V_h, V_{2,h},$ and $\Lambda_h$, on
background and immersed meshes made of quadrilaterals or hexahedra. In particular, the background domain $\Omega$ is always
discretized using a structured mesh $\mathcal{T}_h$.

The outer Krylov subspace solver is set to FGMRES(30) with a restart every 30 iterations. All iterations start from a zero initial guess and are stopped when the relative residual norm is reduced below $10^{-10}$, or when its absolute value is below $10^{-10}$. We notice that we have chosen a rather strict tolerance compared to other papers analyzing preconditioners, which
leads to higher iteration numbers. We have chosen this convergence criterion because it makes seeing trends easier due to the higher number of outer iterations. Right preconditioning is used in all cases. We will investigate AL and modified AL (MAL) preconditioners by
varying \emph{both} the immersed geometries $\Omega_2$ and the size of the jump in the coefficients ($\beta_2 - \beta$), while monitoring the iteration counts across simultaneous mesh refinement of both grids.

We will start by confirming the theoretical findings for the AL preconditioner $\mathcal{P}_{\gamma}$ presented in Section~\ref{sec:AL_prec}. Then, we will analyze the
performance of the modified variant, showing its practical advantages in terms of reduced computational cost. Following Section~\ref{sec:AL_prec}, the SPD
matrix $\mathsf{W}$ is set to $\mathsf{M^2}$. When considering the modified AL preconditioner, sparse direct solvers could be used to solve the augmented subsystems in~\eqref{eqn:P_AL_mod_gamma_12} efficiently in 2D, while in the 3D case they quickly become prohibitively expensive, and inner iterative methods should be preferred in practice. For this reason, we will approximately solve the augmented subsystems using the conjugate-gradient (CG) method preconditioned by one V-cycle of AMG with a loose inner tolerance set to $10^{-2}$. We adopt the \textsc{TrilinosML}~\cite{Trilinos} implementation with
parameters reported in Table~\ref{tab:amg_params}. Moreover, to facilitate the construction of the AMG preconditioner, we replace $\mathsf{W}$ with $\operatorname{diag}(\mathsf{W})$, i.e., the matrix containing only the main diagonal of $\mathsf{W}$, which avoids the computationally expensive triple matrix product involving the dense matrix $\mathsf{W^{-1}}$. This modification is applied to both the system matrix and the preconditioner. When $\mathsf{W}$ is not replaced by its diagonal, its inverse is applied using a direct solver.

When the augmented diagonal blocks are approximately inverted and $\mathsf{W}$ is replaced by $\operatorname{diag}(\mathsf{W})$, the theory presented in the previous section no longer applies; this variant should be viewed as a practical approximation. We do not claim a separate complete theory for such approximation, although eigenvalue bounds can be derived following the analysis presented in~\cite{bakrani2023preconditioningtechniquesclassdouble}.
We denote such inexact variant of the ideal modified AL preconditioner by $\mathcal{\widetilde P}_{\gamma_1,\gamma_2}^{\text{diag}}$. Except where otherwise specified, the source terms are set to $f=1$ and $f_2=2$, while $\beta=1$, and the value of the extended $u$ on $\partial \Omega$ is set to zero. In the forthcoming tables, the number of degrees of freedom for a finite dimensional space will be denoted by $|\cdot|$, e.g. $|V_h|$. Throughout the numerical experiments, we monitor only the inner iteration counts of the $(1,1)$-block, which is particularly challenging due to the issues arising from the augmentation with the coupling matrix $\mathsf{C}$.

Table~\ref{tab:preconditioner_variants} summarizes the preconditioner variants considered in this section. The numerical experiments are performed with $\mathcal{P}_\gamma$, $\mathcal{P}_\gamma^{\mathrm{inexact}}$, and $\mathcal{\widetilde P}_{\gamma_1,\gamma_2}^{\mathrm{diag}}$. The first preconditioner is used to validate the theoretical results, whereas the latter two are intended for practical computations. Since the ideal MAL preconditioner is itself an adaptation of the ideal AL preconditioner, we do not report results for $\mathcal{\widetilde{P}}_{\gamma_1,\gamma_2}$, focusing instead on the practical variant $\mathcal{\widetilde P}_{\gamma_1,\gamma_2}^{\mathrm{diag}}$. 

\begin{table}[h!]
  \centering
  \small
  \begin{tabular}{l l c c}
    \toprule
    \textbf{Variant} & \textbf{Notation}                                            & \textbf{Augmented block(s) solve} & $\mathsf{W}$                                  \\
    \midrule
    Ideal AL         & $\mathcal{P}_\gamma$                                         & Exact                             & $\mathsf{M}^2$                                \\
    Inexact AL       & $\mathcal{P}_\gamma^{\mathrm{inexact}}$                      & CG + AMG                          & $\operatorname{diag}\bigl(\mathsf{M}^2\bigr)$ \\
    Ideal MAL        & $\mathcal{\widetilde{P}}_{\gamma_1,\gamma_2}$                & Exact                             & $\mathsf{M}^2$                                \\
    Inexact MAL      & $\mathcal{\widetilde P}_{\gamma_1,\gamma_2}^{\mathrm{diag}}$ & CG + AMG                          & $\operatorname{diag}\bigl(\mathsf{M}^2\bigr)$ \\
    \bottomrule
  \end{tabular}
  \caption{Summary of the preconditioner variants. The modified variants ($\mathcal{\widetilde{P}}_{\gamma_1,\gamma_2}$ and $\mathcal{\widetilde P}_{\gamma_1,\gamma_2}^{\mathrm{diag}}$) employ two AL parameters $\gamma_1, \gamma_2$ and a block-triangular structure. We will not test exact solves for the MAL case ($\mathcal{\widetilde{P}}_{\gamma_1,\gamma_2}$).}
  \label{tab:preconditioner_variants}
\end{table}

\subsection{Validation of the AL preconditioner}\label{subsec:ideal_validation}
We first consider a uniform discretization of the domain $\Omega_2 \coloneqq [-0.14,0.47]^2$, immersed in the background
domain $\Omega \coloneqq [-1,1]^2$, fixing $\beta=1$ and varying $\beta_2$ from $10$ to $10^7$. For each
value of $\beta_2$, we report in Table~\ref{tab:ideal_square} the number of outer iterations with the ideal AL preconditioner $\mathcal{P}_{\gamma}$ upon mesh refinement. We invert all
the diagonal blocks $\mathsf{A_\gamma}$ and $\mathsf{W}$ exactly. The augmentation parameter is set to $\gamma=10$.
We observe very low iteration counts, ranging from $4$ to $8$. These counts are independent \emph{both} of the refinement level, as well as of the size of the
jump in the coefficients, confirming the parameter robustness, and matching the spectral behaviour observed in Figures~\ref{fig:ideal_spectrum_beta_100} and~\ref{fig:ideal_spectrum_beta_1e6}.

We repeat the same experiment, this time considering a ball $\Omega_2 \coloneqq \mathcal{B}_{0.3}(0,0)$ as immersed domain. The results with this geometry are shown in Table~\ref{tab:ideal_ball}, leading to identical conclusions in terms of iteration counts and
robustness. Numerical solutions when $\beta_2=10^7$ for both geometries are shown in Figure~\ref{fig:ideal_solutions}, showing in a wireframe representation the immersed domains.

A classical and appealing feature of AL-based preconditioners is that they do not require accurate solves for the augmented block for effectiveness~\cite{ALprec,FoVAL}. We have hence verified that identical FGMRES iterations are obtained when applying an inexact inversion of $\mathsf{A}_\gamma$ via a conjugate gradient method, preconditioned by a block-diagonal preconditioner with one AMG V-cycle on each diagonal block and a loose inner tolerance of $10^{-2}$. This variant correspond to the preconditioner $\mathcal{P}_{\gamma}^{\mathrm{inexact}}$ listed in Table~\ref{tab:preconditioner_variants}, where $\mathsf{W}$ is replaced by $\operatorname{diag}(\mathsf{W})$. However, this inversion remains computationally expensive since the inner iterations grow unboundedly upon mesh refinement (exceeding an average of $70$ PCG iterations for the finest discretization). The current lack of a $\gamma$-robust and mesh-independent geometric multigrid solver for $\mathsf{A}_\gamma$ in the fictitious domain setting represents the main computational bottleneck for this (unmodified) version of the AL preconditioner.
\begin{table}[ht]
  \centering
  \begin{subtable}{0.48\textwidth}
    \centering
    \resizebox{\textwidth}{!}{%
      \begin{filecontents*}{data_ideal_iterations_square.csv}
#DoF                     #10 #1e3  #1e7 #N       #time
289 + 25+25,8,8,8,339,0.1073
1089 + 81+81,7,7,7,1251,0.09673
{4,225 +289 +289},6,7,7,4803,0.171
{16,641 + 1,089 + 1,089},6,6,6,18819,0.435
{66,049 + 4,225 + 4,225},5,5,5,74499,1.68
{263,169 + 16,641 + 16,641},4,5,5,296451,11.07
{1,050,625 + 66,049 + 66,049},4,4,4,1182723,57.47
{4,198,401 + 263,169 + 263,169},4,4,4,4720642,304.31

\end{filecontents*}
\pgfplotstabletypeset[
        col sep=comma,
        string type,
        header=false,
        columns={0,1,2,3},
        columns/0/.style={column name={DoF $\bigl(|V_h|+|V_{2,h}|+|\Lambda_h|\bigr)$}, column type={w{l}{4.6cm}||}},
        columns/1/.style={column name={$\beta_2=10$}, column type={w{c}{1cm}}},
        columns/2/.style={column name={$\beta_2=10^3$}, column type={w{c}{1cm}}},
        columns/3/.style={column name={$\beta_2=10^7$}, column type={w{c}{1cm}}},
        every head row/.style={
            before row=\toprule
            \multicolumn{4}{c}{\textbf{Iteration counts with $\mathcal{P}_{\gamma}$}} \\ \midrule,
            after row=\midrule
          },
        every last row/.style={after row=\bottomrule}
      ]{data_ideal_iterations_square.csv}
    }
    \caption{Immersed square.}
    \label{tab:ideal_square}
  \end{subtable}\hfill
  \begin{subtable}{0.48\textwidth}
    \centering
    \resizebox{\textwidth}{!}{%
      \begin{filecontents*}{data_ideal_iterations_ball.csv}
#DoF                     #10 #1e3  #1e7 #N      #time
289 + 8+8                , 4,    3 ,  3, 339  ,  0.060
1089 + 25+25              , 7,    7 ,  7, 1251 ,  0.072     
{4,225 + 89+89}           , 7,    7 ,  7, 4803 ,  0.146     
{16,641 + 337+337}         , 7,    7 ,  7, 18819,  0.526   
{66,049 + 1,313+1,313}       , 6,    6 ,  6, 74499,  2.528   
{263,169 + 5,185+5,185}      , 6,    6 ,  6, 296451, 11.723   
{1,050,625 + 20,609+20,609}   , 5,    5 ,  5, 1182723,59.70  
{4,198,401 + 82,177+82,177}   , 5,    5 ,  5, 4720642,304.98  

\end{filecontents*}
\pgfplotstabletypeset[
        col sep=comma,
        string type,
        header=false,
        columns={0,1,2,3},
        columns/0/.style={column name={DoF $\bigl(|V_h|+|V_{2,h}|+|\Lambda_h|\bigr)$}, column type={w{l}{4.5cm}||}},
        columns/1/.style={column name={$\beta_2=10$}, column type={w{c}{1cm}}},
        columns/2/.style={column name={$\beta_2=10^3$}, column type={w{c}{1cm}}},
        columns/3/.style={column name={$\beta_2=10^7$}, column type={w{c}{1cm}}},
        every head row/.style={
            before row=\toprule
            \multicolumn{4}{c}{\textbf{Iteration counts with $\mathcal{P}_{\gamma}$}} \\ \midrule,
            after row=\midrule
          },
        every last row/.style={after row=\bottomrule}
      ]{data_ideal_iterations_ball.csv}
    }
    \caption{Immersed ball.}
    \label{tab:ideal_ball}
  \end{subtable}
  \caption{GMRES iteration counts with the ideal AL preconditioner $\mathcal{P_\gamma}$, varying the jump coefficient $\beta_2$ across several refinement levels for the square and ball geometries.}
  \label{tab:ideal_preconditioner}
\end{table}

\begin{figure}[ht]
  \centering
  \begin{minipage}{0.35\textwidth}
    \centering
    \includegraphics[width=\linewidth]{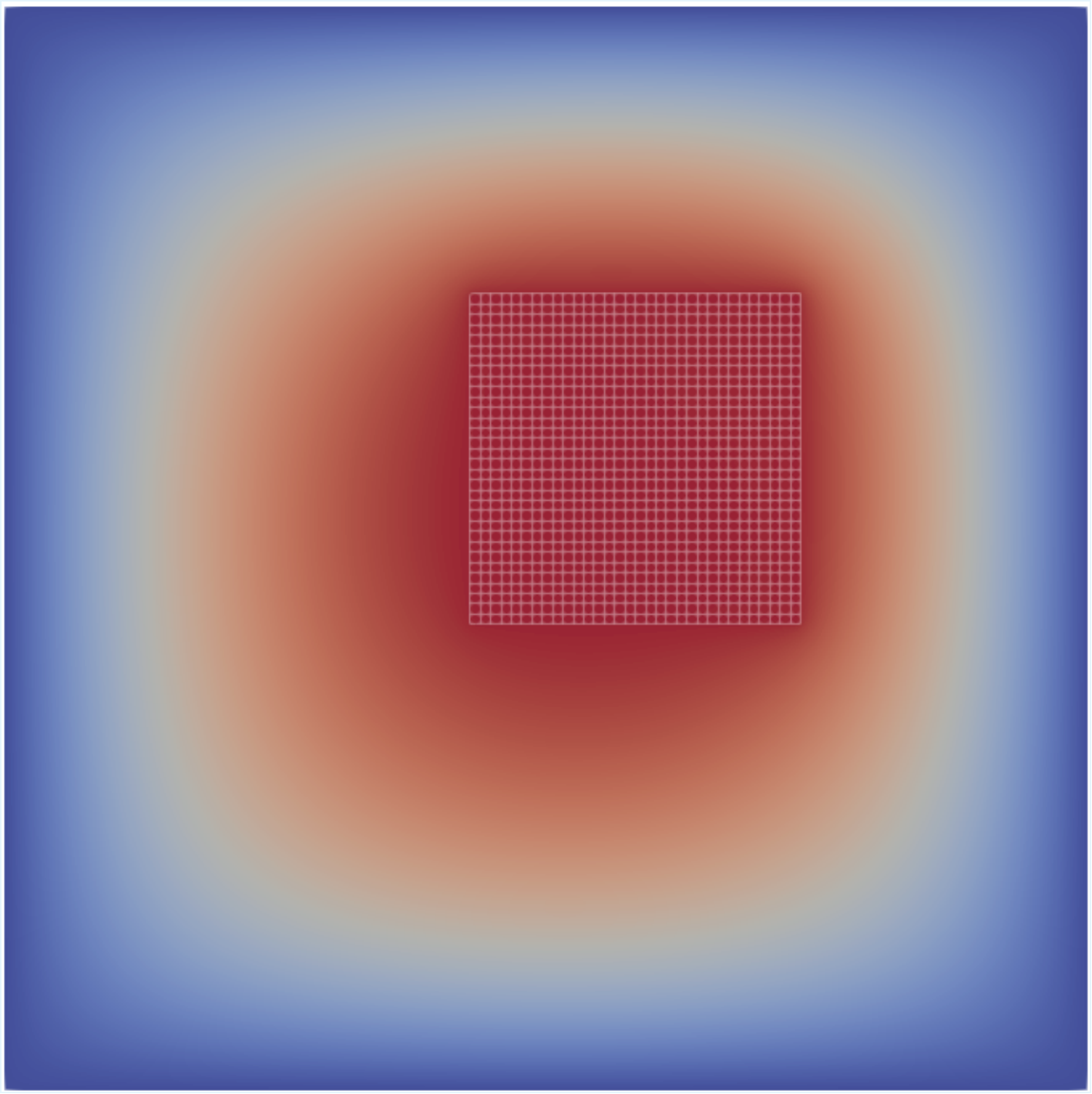}
  \end{minipage}\hfill
  \begin{minipage}{0.35\textwidth}
    \centering
    \includegraphics[width=\linewidth]{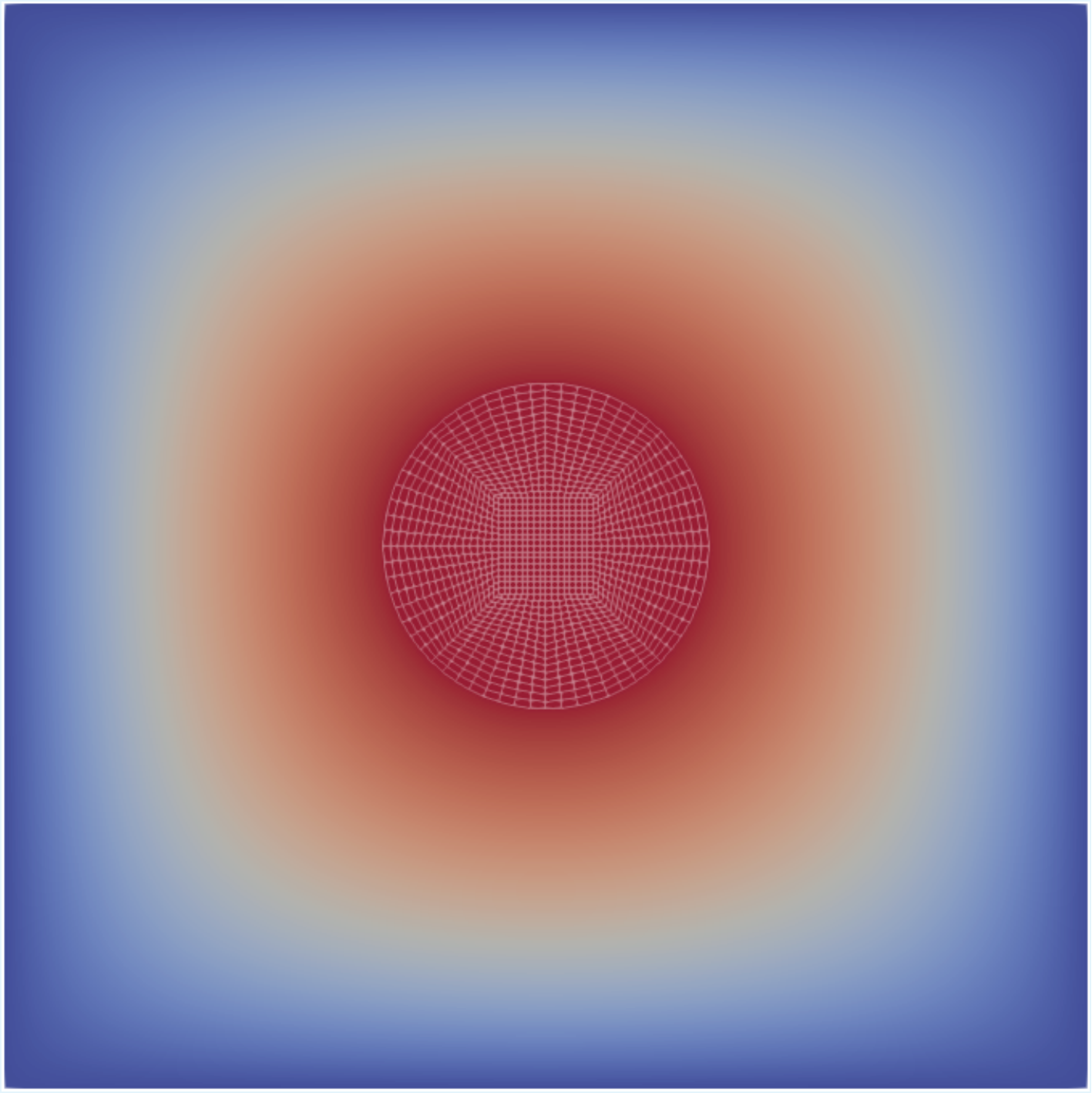}
  \end{minipage}

  \vspace{0.3cm}

  \includegraphics{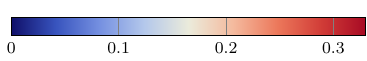}
  \caption{Numerical solutions $u_h$ when $\beta_2=10^7$ for the square and ball geometries.}
  \label{fig:ideal_solutions}
\end{figure}

\subsection{Sanity check on manufactured test case}
We validate our implementation through a convergence test using a manufactured solution. We consider $\Omega =[-1.4,1.4]^2$, $\Omega_2 = \mathcal{B}_1(0,0)$, $f=f_2=1$, and $u_1(x,y)=\frac{4-x^2 - y^2}{4}$ in $\Omega_1$, and $u_2(x,y)= \frac{31-x^2 -y^2}{40}$ in $\Omega_2$. The coefficients are set to $\beta = 1$, $\beta_2=10$.
We report in Figure~\ref{fig:convergence_manufactured} the convergence curves. The observed orders of convergence are almost $1$ in $L^2$
and $\frac{1}{2}$ in $H^1$, consistently with the regularity of the solution $u$ in $H^r(\Omega)$, $r \in (1,\frac{3}{2})$~\cite[Sec. 4.2]{BoffiGastaldi2024}.

\begin{figure}[ht]
  \centering
  \includegraphics{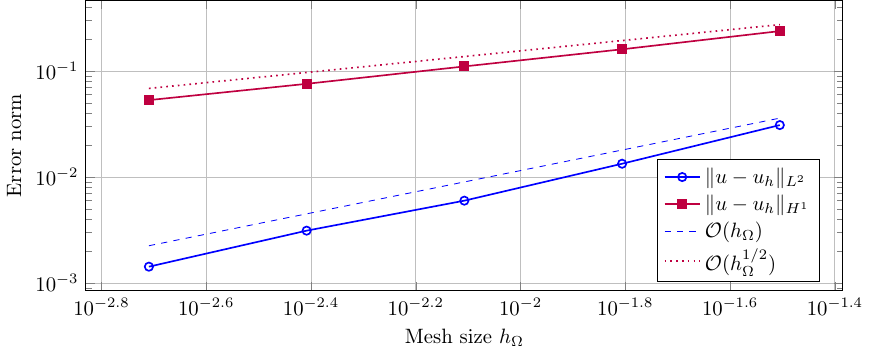}
  \caption{Convergence of the $L^2$ and $H^1$ norms of the error for the manufactured solution
      under simultaneous refinement of both grids. The dashed and dotted lines indicate
      the expected orders $\mathcal{O}(h_\Omega)$ and $\mathcal{O}(h_{\Omega}^{1/2})$, respectively.}
  \label{fig:convergence_manufactured}
\end{figure}

Finally, we investigate in Table~\ref{tab:ratio} the effect of the ratio $\frac{h_{\Omega}}{h_{\Omega_2}}$ on the behavior of the ideal AL preconditioner. We display the outer iteration counts for $\mathcal{P}{\gamma}$ (with $\gamma=10$), for the manufactured test case introduced above, under simultaneous refinement of both grids. For each fixed value of the ratio, the background and immersed meshes are refined simultaneously so that $\frac{h_{\Omega}}{h_{\Omega_2}}$ is kept approximately constant across refinement levels. Alongside the number of background degrees of freedom $|V_h|$, we also report the number of immersed degrees of freedom $|V_{2,h}|=|\Lambda_h|$, which coincide in this test case.
As the immersed mesh becomes finer than the background mesh, the iteration count increases. However, this growth remains moderate for the range of comparable mesh sizes considered, in agreement with the assumptions adopted throughout this work. Moreover, for each fixed mesh-size ratio, the iteration count remains stable across successive refinement levels.

\begin{table}[htbp]
  \centering
  \begin{tabular}{c !{\vrule} c c !{\vrule} c c !{\vrule} c c}
    \toprule
    \multicolumn{7}{c}{\textbf{Outer iteration counts with $\mathcal{P}_{\gamma}$}}                                                                                                                                                                               \\ \midrule
               & \multicolumn{2}{c !{\vrule}}{$\frac{h_{\Omega}}{h_{\Omega_2}}=0.46$} & \multicolumn{2}{c !{\vrule}}{$\frac{h_{\Omega}}{h_{\Omega_2}}=0.70$} & \multicolumn{2}{c}{$\frac{h_{\Omega}}{h_{\Omega_2}}=1.25$}                                         \\
    $|V_h|$    & $|V_{2,h}|=|\Lambda_h|$                                              & it.                                                                  & $|V_{2,h}|=|\Lambda_h|$                                    & it.  & $|V_{2,h}|=|\Lambda_h|$ & it.  \\
    \midrule
    $289$      & $25$                                                                 & $6$                                                                  & $89$                                                       & $11$ & $337$                   & $18$ \\
    $1{,}089$  & $89$                                                                 & $8$                                                                  & $337$                                                      & $13$ & $1{,}313$               & $19$ \\
    $4{,}225$  & $337$                                                                & $8$                                                                  & $1{,}313$                                                  & $13$ & $5{,}185$               & $17$ \\
    $16{,}641$ & $1{,}313$                                                            & $8$                                                                  & $5{,}185$                                                  & $13$ & $20{,}609$              & $16$ \\
    \bottomrule
  \end{tabular}
  \caption{Outer iteration counts for ideal AL preconditioner $\mathcal{P}_{\gamma}$ (with $\gamma=10$), applied to the manufactured test case, for different values of the ratio $\frac{h_{\Omega}}{h_{\Omega_2}}$ between the background and immersed mesh sizes. For each ratio, we report the number of background degrees of freedom $|V_h|$ together with the number of immersed degrees of freedom $|V_{2,h}|=|\Lambda_h|$ (which coincide in this test case) and the outer iteration count (it.); grids are refined simultaneously so that the ratio is kept approximately constant across refinement levels.}
  \label{tab:ratio}
\end{table}

\subsection{Validation of the inexact modified AL preconditioner}\label{subsec:modified_numerical_experiments}
Having validated the ideal AL preconditioner and its inexact variant, we consider the inexact MAL preconditioner $\mathcal{\widetilde P}_{\gamma_1,\gamma_2}^{\text{diag}}$ , in which inner iterative methods and a diagonal approximation of $\mathsf{W}$ are employed. We repeat the same experiments for both geometries in Section~\ref{subsec:ideal_validation}, fixing $\gamma_1=10$ and $\gamma_2=10^{-2}$.
The results are shown in Table~\ref{tab:modified_preconditioner}.
Slightly higher iteration counts than in the unmodified case are observed, which are nevertheless independent of the refinement level and reasonably robust with respect to the jump in the coefficients. However, when $\beta_2=10$, we notice an increase in iteration counts. This is consistent with the observations made in Remark~\ref{rmk:scalingLA2} about the better properties of the modified preconditioner for larger jumps. Between
parentheses, we report the iteration counts when the parameter $\gamma_2$ is taken to the lower value of $10^{-3}$, which provides
consistent results with those for higher values of $\beta_2$. We have checked that using $\gamma_2 = 10^{-3}$ for the other values of $\beta_2$ gives lower outer iterations, but the gain is not as large as with $\beta_2=10$, since the spectrum is already more clustered when $\beta_2 - \beta$ is large. For this reason, we will keep $\gamma_2=10^{-2}$ in these regimes.

The last column of Table~\ref{tab:modified_preconditioner} shows the (average) number of inner conjugate-gradient iterations required for the approximate inversion of the $(1,1)$-block
of the preconditioner when $\beta_2=10^7$. We observe very low inner iteration counts upon mesh refinement, indicating that this modified variant might be a viable option to further
reduce the computational cost of the AL preconditioner.
Indeed, the gain in computational cost of the inexact MAL preconditioner with respect to the inexact AL preconditioner is quite significant, as indicated by the wall-clock time reported in Figure~\ref{fig:wallclock_comparison}, where we show, in a log-log scale, the wall-clock time (in seconds) to solve
the linear system for a sequence of refinement cycles when $\beta_2=10^7$ for the square and ball test cases. The curve for the modified approach lies well below that of the inexact AL preconditioner. Although it requires more outer iterations, the former yields the lowest times, thanks to the lower number of inner iterations. More precisely, for the finest refinement level of the immersed square test case, $\mathcal{P}_{\gamma}^{\text{inexact}}$ takes about $300$ seconds, while $\mathcal{\widetilde P}_{\gamma_1,\gamma_2}^{\text{diag}}$  needs $67$ seconds. Compared to the unmodified variant, $\mathcal{\widetilde P}_{\gamma_1,\gamma_2}^{\text{diag}}$ gives a speed-up of a factor greater than four. A possible alternative would be to use $\operatorname{diag}(\mathsf{W})$  only in the preconditioner. This choice reduces the number of outer iterations but substantially increases the number of inner iterations, limiting its applicability due to the observed higher computational cost. Therefore, we omit it from the discussion. The timing data in Figure~\ref{fig:wallclock_comparison} should be read as representative of the implementation and hardware described above, rather than as a complete performance study.

\begin{table}[ht]
  \centering

  \begin{subtable}{0.48\textwidth}
    \centering
    \resizebox{\textwidth}{!}{%
    \begin{filecontents*}{data_modified_iterations_square_diagonal.csv}
#DoF                     #10        #1e3  #1e7 #N      #time   #avg inner
289 + 25 + 25                ,      16 , 17 , 17, 339     ,  0.01630 , 2
1089 + 81 + 81               ,      18 (17) , 18 , 18, 1251    ,  0.01880 , 2
{4,225 + 289 + 289}           ,     19 (17) , 19 , 19, 4803    ,  0.040135 , 8
{16,641 + 1,089 + 1,089}        ,   20 (16) , 20 , 20, 18819   ,  0.1137 , 8
{66,049 + 4,225 + 4,225}        ,   22 (16) , 22 , 22, 74499   ,  0.568 ,  9
{263,169 + 16,641 + 16,641}      ,  25 (18) , 24 , 24, 296451  ,  3.221 ,  9
{1,050,625 + 66,049 + 66,049}    ,  39 (19) , 25 , 25, 1182723 ,  14.54 , 10
{4,198,401 + 263,169 + 263,169}   , 100 (28) , 22 , 21, 4720642 ,  65.32,  11

\end{filecontents*}
\pgfplotstabletypeset[
    col sep=comma,
    string type,
    header=false,
    columns={0,1,2,3,6},
    columns/0/.style={column name={DoF $\bigl(|V_h|+|V_{2,h}|+|\Lambda_h|\bigr)$}, column type={w{l}{4.6 cm}||}},
    columns/1/.style={column name={$\beta_2=10$}, column type={w{c}{1cm}}},
    columns/2/.style={column name={$\beta_2=10^3$}, column type={w{c}{1cm}}},
    columns/3/.style={column name={$\beta_2=10^7$}, column type={w{c}{1cm}}},
    columns/6/.style={column name={Inner}, column type={w{c}{1cm}}},
    every head row/.style={
    before row=\toprule
    \multicolumn{5}{c}{\textbf{FGMRES iteration counts with $\mathcal{\widetilde P}_{\gamma_1,\gamma_2}^{\text{diag}}$}} \\ \midrule,
    after row=\midrule
    },
    every last row/.style={after row=\bottomrule}
    ]{data_modified_iterations_square_diagonal.csv}
    }
    \caption{Immersed square.}
    \label{tab:modified_square_diag}
  \end{subtable}\hfill
  \begin{subtable}{0.48\textwidth}
    \centering
    \resizebox{\textwidth}{!}{%
    \begin{filecontents*}{data_modified_iterations_ball_diagonal.csv}
#DoF                     #10        #1e3  #1e7 #N      #time   #avg inner
289 + 8 + 8                ,      5 ,  5 ,  6,   339     ,  0.00659 ,  2
1089 + 25 + 25              ,     10 (11) , 10 , 10,  1251    ,  0.01059 ,  2
{4,225 + 89 + 89}           ,     16 (14) , 16 , 16,  4803    ,  0.03439 ,  8
{16,641 + 337 + 337}         ,    16 (13) , 16 , 16,  18819   ,  0.10339 ,  8
{66,049 + 1,313 + 1,313}       ,  19 (14) , 18 , 18,  74499   ,  0.5320 ,  9
{263,169 + 5,185 + 5,185}      ,  20 (14) , 19 , 18, 296451  ,  2.6984 ,  9
{1,050,625 + 20,609 + 20,609}   , 23 (16) , 21 , 21, 1182723 ,  14.6557 , 10
{4,198,401 + 82,177 + 82,177}   , 36 (17) , 21 , 21, 4720642 ,  66.0122,  11

\end{filecontents*}
\pgfplotstabletypeset[
    col sep=comma,
    string type,
    header=false,
    columns={0,1,2,3,6},
    columns/0/.style={column name={DoF $\bigl(|V_h|+|V_{2,h}|+|\Lambda_h|\bigr)$}, column type={w{l}{4.5cm}||}},
    columns/1/.style={column name={$\beta_2=10$}, column type={w{c}{1cm}}},
    columns/2/.style={column name={$\beta_2=10^3$}, column type={w{c}{1cm}}},
    columns/3/.style={column name={$\beta_2=10^7$}, column type={w{c}{1cm}}},
    columns/6/.style={column name={Inner}, column type={w{c}{1cm}}},
    every head row/.style={
    before row=\toprule
    \multicolumn{5}{c}{\textbf{FGMRES iteration counts with $\mathcal{\widetilde P}_{\gamma_1,\gamma_2}^{\text{diag}}$}} \\ \midrule,
    after row=\midrule
    },
    every last row/.style={after row=\bottomrule}
    ]{data_modified_iterations_ball_diagonal.csv}
    }
    \caption{Immersed ball.}
    \label{tab:modified_ball_diag}
  \end{subtable}

  \caption{Outer iteration counts for FGMRES preconditioned by $\mathcal{\widetilde P}_{\gamma_1,\gamma_2}^{\text{diag}}$, varying the jump coefficient $\beta_2$ across several refinement levels for the square (\ref{tab:modified_square_diag}) and ball (\ref{tab:modified_ball_diag}) tests. The AL parameters are $\gamma_1=10$, $\gamma_2=10^{-2}$. Values between parentheses indicate the iteration counts for $\gamma_2=10^{-3}$. The last column reports the average number of inner CG iterations to invert the $(1,1)$-block when $\beta_2=10^{7}$.}
  \label{tab:modified_preconditioner}
\end{table}

\begin{figure}[ht]
  \centering
  \begin{minipage}{0.48\textwidth}
    \centering
    \includegraphics{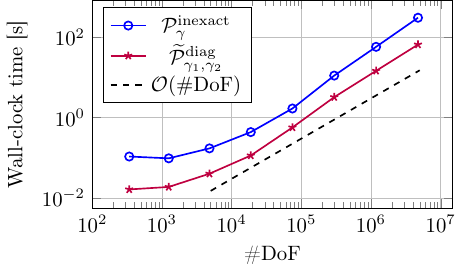}
    \subcaption{Immersed square.}
    \label{fig:wallclock_square}
  \end{minipage}\hfill
  \begin{minipage}{0.48\textwidth}
    \centering
    \includegraphics{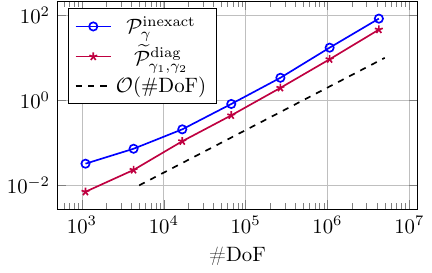}
    \subcaption{Immersed ball.}
    \label{fig:wallclock_ball}
  \end{minipage}
  \caption{Comparison between the inexact AL $\mathcal{P}_{\gamma}^{\text{inexact}}$ and the inexact MAL $\mathcal{\widetilde P}_{\gamma_1,\gamma_2}^{\text{diag}}$ preconditioners in terms of wall-clock time for $\beta_2=10^7$, using the test cases with the immersed square (\subref{fig:wallclock_square}) and the immersed ball (\subref{fig:wallclock_ball}).}
  \label{fig:wallclock_comparison}
\end{figure}

\subsubsection{Comparison with a preconditioner from the literature}
We end the validation of our modified preconditioner by comparing in Table~\ref{tab:comparison_literature} the number of iterations of the inexact MAL preconditioner with those obtained using the block upper-triangular preconditioner introduced in~\cite{boffi2023parallel} and extended in~\cite{alshehri2025multigridpreconditioningfddlmmethod} to the elliptic-interface case. The following implementation choices are used: the proposed preconditioner employs FGMRES(30) with a relative residual tolerance of $10^{-10}$ and inexact block applications via CG+AMG, while the literature preconditioner uses GMRES(50) with the same residual tolerance and a maximum of 500 iterations. The comparison is meant to indicate performance under these stated implementations. We mention that the
latter preconditioner requires the inversion of $\mathsf{A}$ and of the two-by-two block\footnote{The existence of this inverse follows from the ellipticity of $A_2$ on the kernel of $C_2$, together with the corresponding discrete inf-sup stability condition.}
$\begin{bmatrix}
    \mathsf{A_2}  & -\mathsf{C_2^T} \\
    -\mathsf{C_2} & 0
  \end{bmatrix}
$. We consider direct inversion of these blocks, but multigrid-based inexact variants for these block solves are also possible in practice~\cite{alshehri2025multigridpreconditioningfddlmmethod}.
The comparison is therefore an iteration count comparison under matched stopping criteria, not a detailed comparison between implementations; our interest is in the effectiveness of the preconditioners with respect to jumps in the coefficients. We consider again the
ball $\mathcal{B}_{0.3}(0,0)$ as immersed geometry, using a wider range of values for $\beta_2$ compared to the previous examples, in order to investigate the robustness with respect to increasing coefficients jumps. Each table entry reports, as first element, the number of iterations of FGMRES preconditioned by $\mathcal{\widetilde P}_{\gamma_1,\gamma_2}^{\text{diag}}$, and, as second element, the iterations of GMRES preconditioned by the block upper-triangular preconditioner in~\cite{boffi2023parallel}. For the latter, a deterioration in
the convergence is observed as the jump increases: for $\beta_2=10^5$, iteration counts displays a lack of robustness with respect to mesh refinement, while for $\beta_2=10^7$ GMRES
fails to converge within $500$ iterations. Conversely, the inexact MAL preconditioner maintains low iteration counts across all jumps and refinement levels. However, for moderate values of $\beta_2$, both approaches exhibit good iteration counts, which are robust in the mesh sizes.

\begin{table}[htbp]
  \centering
  \begin{minipage}{0.62\textwidth}
    \centering
    \resizebox{\textwidth}{!}{%
    \begin{filecontents*}{data_modified_iterations_ball_with_literature.csv}
{289 + 8 + 8},                5/3,      5/4,     6/4,  6/8
{1,089 + 25 + 25},            11/8,     10/8,    10/8,  10/13
{4,225 + 89 + 89},            14/10,    16/13,   16/18, 16/20
{16,641 + 337 + 337},         13/9,     16/14,   16/24, 16/29
{66,049 + 1,313 + 1,313},     14/8,     18/13,   18/31, 18/37
{263,169 + 5,185 + 5,185},    14/7,     19/11,   18/50, 18/†
{1,050,625 + 20,609 + 20,609},16/6,     21/10,   21/49, 21/†
\end{filecontents*}
\pgfplotstabletypeset[
    col sep=comma,
    string type,
    header=false,
    columns={0,1,2,3,4},
    columns/0/.style={column name={DoF $\bigl(|V_h|+|V_{2,h}|+|\Lambda_h|\bigr)$}, column type=l||},
    columns/1/.style={column name={$\beta_2=10$}, column type={c@{\hspace{8mm}}}},
    columns/2/.style={column name={$\beta_2=10^3$}, column type=c},
    columns/3/.style={column name={$\beta_2=10^5$}, column type=c},
    columns/4/.style={column name={$\beta_2=10^7$}, column type=c},
    every head row/.style={
    before row=\toprule
    \multicolumn{5}{c}{\textbf{(F)GMRES iteration counts: $\mathcal{\widetilde P}_{\gamma_1,\gamma_2}^{\text{diag}}$ / block-triangular~\cite{alshehri2025multigridpreconditioningfddlmmethod}}} \\ \midrule,
    after row=\midrule
    },
    every last row/.style={after row=\bottomrule}
    ]{data_modified_iterations_ball_with_literature.csv}
    }
  \end{minipage}
  \caption{Comparison between the inexact MAL preconditioner (with FGMRES) and the block upper-triangular preconditioner (with GMRES) from~\cite{alshehri2025multigridpreconditioningfddlmmethod}, applied to the immersed ball geometry. Symbol $\dagger$ indicates that GMRES did not converge within $500$ iterations. When $\beta_2=10$ we report the iteration counts for $\gamma_2=10^{-3}$, while other columns are obtained with $\gamma_2=10^{-2}$.}
  \label{tab:comparison_literature}
\end{table}
The superior robustness of our preconditioner with respect to coefficient jumps, as shown in Table~\ref{tab:comparison_literature}, can be attributed to the observations made in Remark~\ref{rmk:scalingLA2}.

\subsection{More complex forcing term}\label{subsec:complex_forcing_term}
We now consider the numerical test presented in~\cite[Sect. 6]{REGAZZONI2024117327}. The immersed domain is again $\Omega_2 = \mathcal{B}_{0.3}(0,0)$, with
forcing term $f(x,y)= \sin(\pi x) + \tanh(y)$. Neumann homogeneous boundary conditions are imposed on $\partial \Omega$. Numerical solutions for some selected values
of the jump $\beta_2 - \beta$ are shown in Figure~\ref{fig:contour_regazzoni}, where the role played
by the ratio $\frac{\beta_2}{\beta}$ is visible: the larger it is, the stronger the gradient discontinuities across the immersed domain $\Omega_2$.
We use the inexact modified AL-based preconditioner with $\gamma_1=10$ and $\gamma_2 = 10^{-2}$, as in previous tests. Iteration counts using this
configuration across mesh refinement are reported in Table~\ref{tab:modified_regazzoni}. We observe robust
iteration counts, essentially independent of the mesh sizes and of the jump determined by the coefficient $\beta_2$. Consistently with the previous examples, for the lowest value
of $\beta_2$ a mild dependence on the mesh sizes is observed, which is mitigated by decreasing the value of $\gamma_2$ to $10^{-3}$. The average number of inner iterations when $\beta_2=10^7$ is reported in the last column of Table~\ref{tab:modified_regazzoni}, where we observe them to be low and stable, confirming the previous trends.

\begin{figure}[h]
  \centering
  \begin{minipage}{0.25\textwidth}
    \centering
    \includegraphics[width=\linewidth]{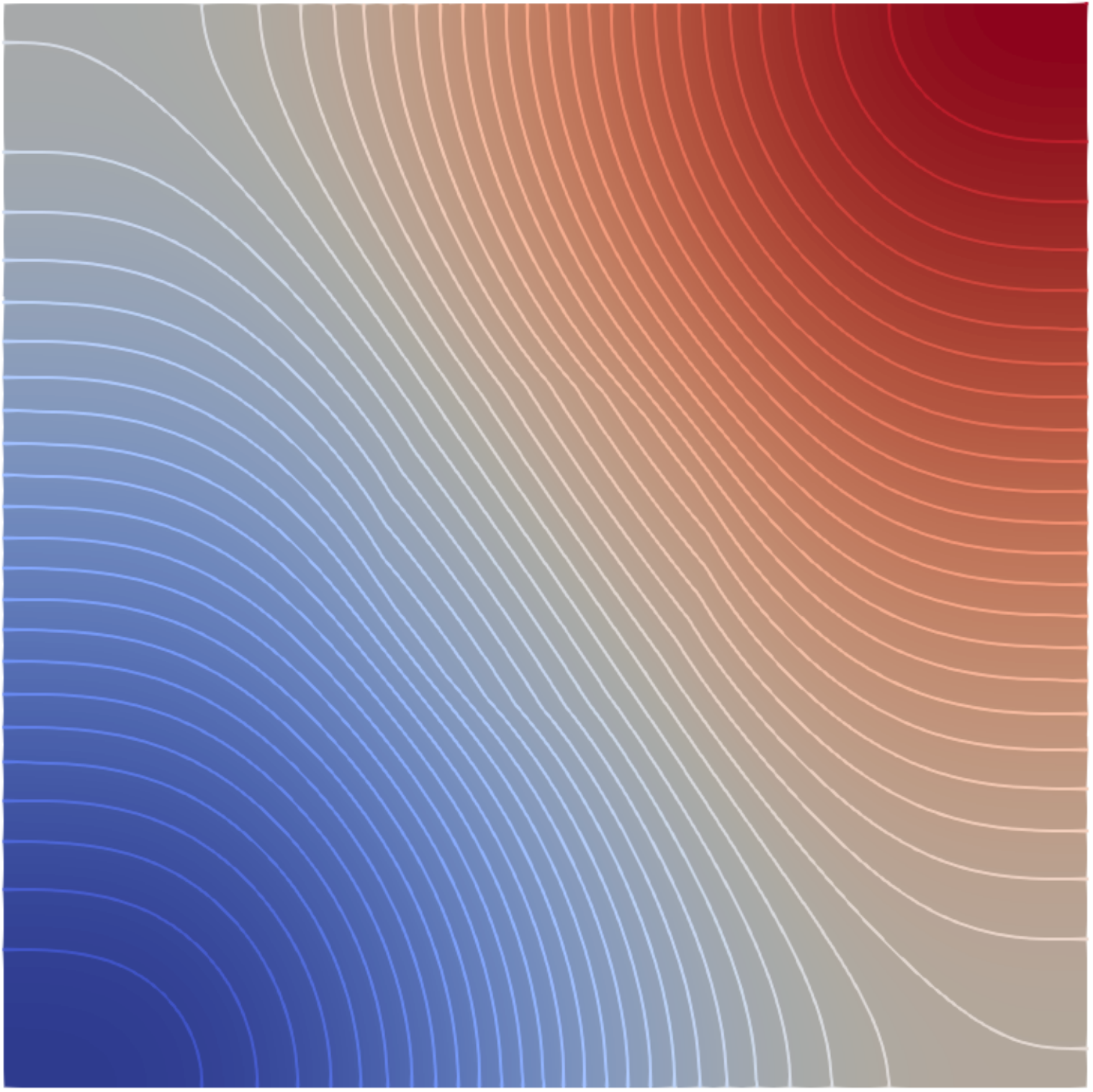}
    \caption*{$\beta_2 =1.4$}
    \label{fig:image1}
  \end{minipage}
  \begin{minipage}{0.25\textwidth}
    \centering
    \includegraphics[width=\linewidth]{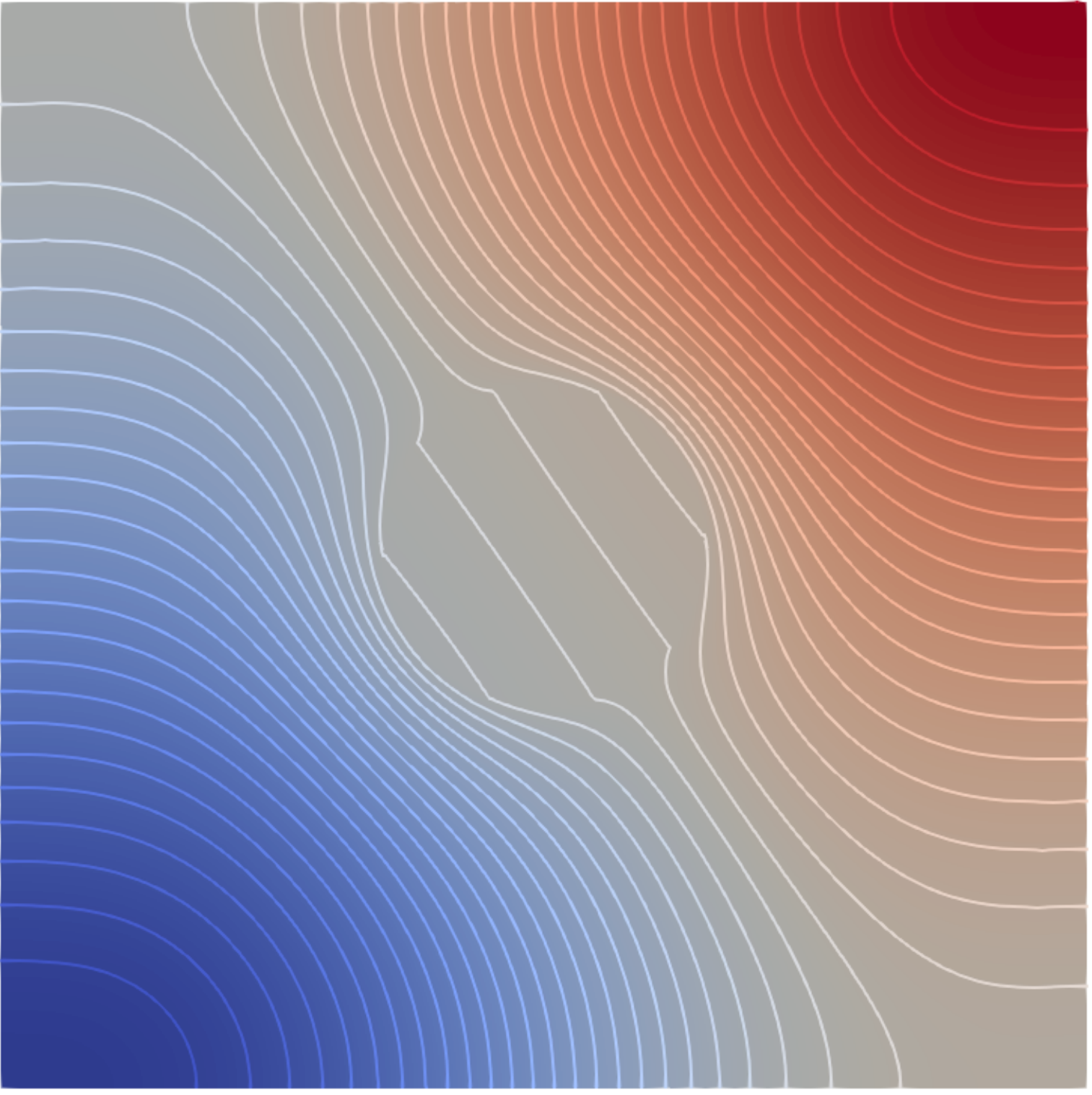}
    \caption*{$\beta_2 =10$}
    \label{fig:image2}
  \end{minipage}
  \begin{minipage}{0.25\textwidth}
    \centering
    \includegraphics[width=\linewidth]{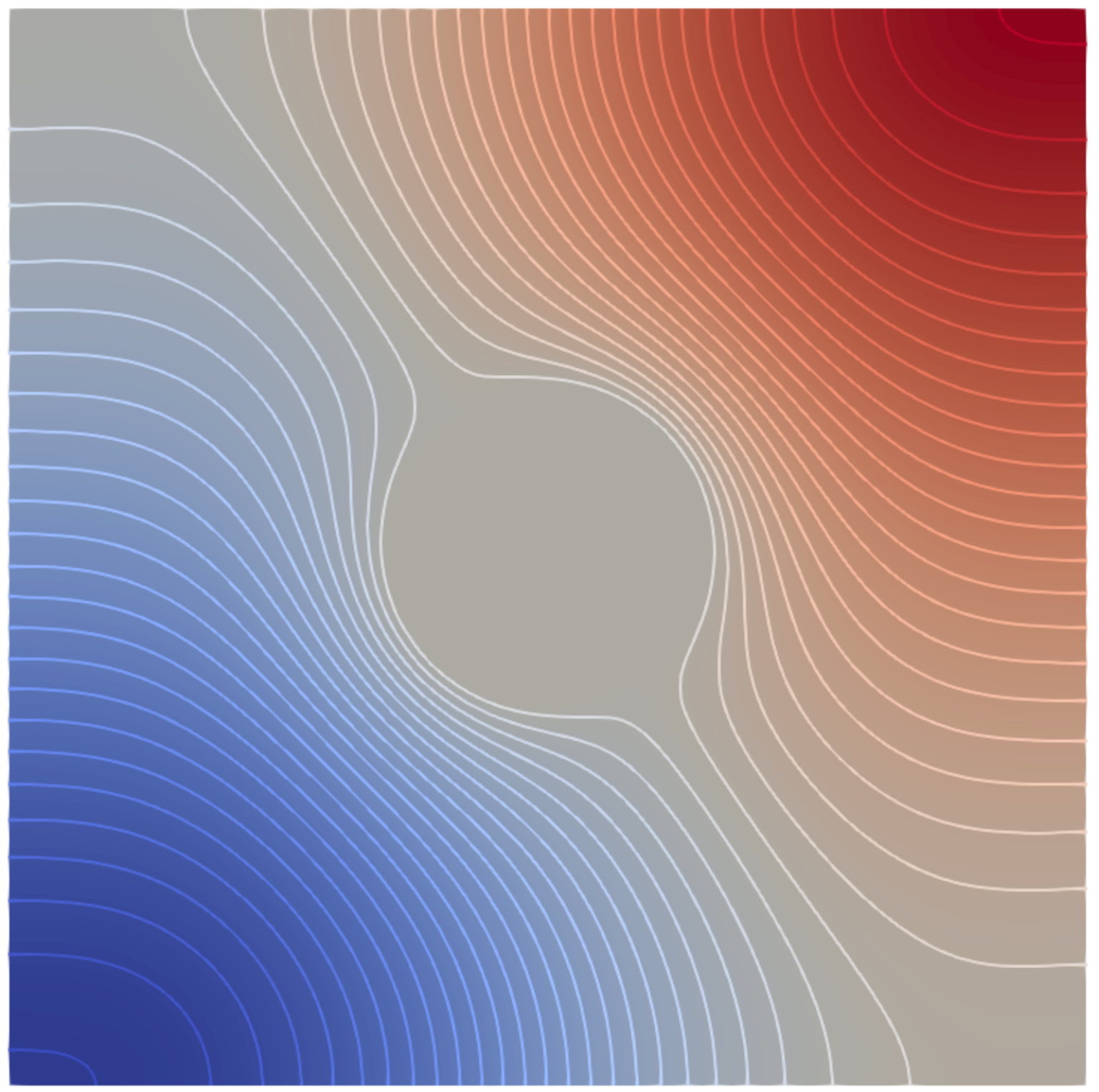}
    \caption*{$\beta_2 =10^7$}
    \label{fig:image3}
  \end{minipage}

  \vspace{0.3cm}

  \includegraphics{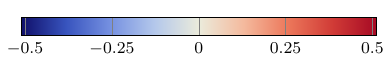}
  \caption{Numerical solutions $u_h$ for different values of $\beta_2$. White lines are contour lines.}
  \label{fig:contour_regazzoni}
\end{figure}

\begin{table}[ht]
  \centering
  \begin{subtable}{0.55\textwidth}
    \centering
    \resizebox{\textwidth}{!}{%
    \begin{filecontents*}{data_modified_iterations_regazzoni_diagonal.csv}
#DoF                     #10        #1e3  #1e7 #N    #time      #avg inner
289 + 8 + 8             ,  4 (5)  , 5 , 4   , 339   ,  0.,  1
1089 + 25 + 25             ,  9 (8)  , 9 , 8   , 1251  ,  0.,  1
{4,225 + 89 + 89}           ,  15 (14)  , 15 , 15  , 4803  ,  0.,  6
{16,641 + 337 + 337}         ,  15 (13)  , 14 , 15  , 18819 ,   0., 6
{66,049 + 1,313 + 1,313}       ,  16 (14)  , 16 , 16  , 74499 ,  0.,  6
{263,169 + 5,185 + 5,185}      ,  16 (13)  , 15 , 15  , 296451 ,  0., 7
{1,050,625 + 20,609 + 20,609}   ,  19 (13)  , 15 , 15  , 1182723 , 0., 8
{4,198,401 + 82,177 + 82,177}   ,  31 (14)  , 14 , 14  , 4720642 , 0., 9

\end{filecontents*}
\pgfplotstabletypeset[
    col sep=comma,
    string type,
    header=false,
    columns={0,1,2,3,6},
    columns/0/.style={column name={DoF $\bigl(|V_h|+|V_{2,h}|+|\Lambda_h|\bigr)$}, column type=l||},
    columns/1/.style={column name={$\beta_2=10$}, column type=c},
    columns/2/.style={column name={$\beta_2=10^3$}, column type=c},
    columns/3/.style={column name={$\beta_2=10^7$}, column type=c},
    columns/6/.style={column name=Inner, column type=c},
    every head row/.style={
    before row=\toprule
    \multicolumn{5}{c}{\textbf{FGMRES iteration counts with $\mathcal{\widetilde P}_{\gamma_1,\gamma_2}^{\text{diag}}$}} \\ \midrule,
    after row=\midrule
    },
    every last row/.style={after row=\bottomrule}
    ]{data_modified_iterations_regazzoni_diagonal.csv}
    }
    \label{tab:regazzoni_diag}
  \end{subtable}
  \caption{Outer iteration counts for FGMRES preconditioned by $\mathcal{\widetilde P}_{\gamma_1,\gamma_2}^{\text{diag}}$ when applied to Example~\ref{subsec:complex_forcing_term}, varying the jump coefficient $\beta_2$ across several refinement levels. The AL parameters are $\gamma_1=10$, $\gamma_2=10^{-2}$. Values between parentheses indicate the iteration counts for $\gamma_2=10^{-3}$. The last column reports the average number of inner iterations when $\beta_2=10^7$.}
  \label{tab:modified_regazzoni}
\end{table}

\subsection{Different finite elements}\label{subsec:diff_fem}
We test the performance of the modified preconditioner when using different spaces than conforming $\mathcal{Q}^1$ elements. As outlined in Remark~\ref{rmk:spaces}, piecewise constant elements
can be employed for the multiplier space $\Lambda_h$. Among them, we consider the
$\mathcal{Q}^1-(\mathcal{Q}^1 + \mathfrak{B})-\mathcal{Q}^0$ element, which was analyzed in~\cite{BoffiGastaldi2024}. With this choice for $\Lambda_h$,
the mass matrix on the multiplier space, and hence $\mathsf{W}$, are diagonal matrices by construction.
On the other hand, the matrix $\mathsf{C}_2$ is now a rectangular (mass) matrix. We repeat the same test cases as in Section~\ref{subsec:modified_numerical_experiments}, using the inexact MAL
preconditioner with $\gamma_1=10$ and $\gamma_2=10^{-2}$, for several refinement cycles and jumps in the coefficients. The
results of such tests are in Table~\ref{tab:iteration_counts_dg_space}. Iteration counts for FGMRES are robust with respect to
mesh refinements and jump sizes, confirming the effectiveness of the preconditioner also for other stable spaces. Inner iteration counts for the $(1,1)$-block when $\beta_2=10^7$ remain low and stable, and are reported in the last column of Table~\ref{tab:iteration_counts_dg_space}.

\begin{table}[h!]
  \centering
  \begin{subtable}{0.48\textwidth}
    \centering
    \resizebox{\textwidth}{!}{%
      \begin{filecontents*}{data_modified_iterations_square_dg.csv}
#DoF                             #10   #1e3  #1e7 #N      #time   #avg inner
289 + 41 + 16                    , 13 (14) , 14 , 10, 339          ,  0.022 ,  1
1089 + 145 + 64                  , 18 (18) , 20 , 20, 1251         ,  0.032 ,  1
{4,225 + 545 + 256}              , 19 (17) , 19 , 19, 4803         ,  0.058 ,  5
{16,641 + 2,113 + 1,024}         , 18 (16) , 19 , 19, 18819        ,  0.181 ,  6
{66,049 + 8,321 + 4,096}         , 19 (15)  ,19 , 19, 74499        ,  1.070 ,  7
{263,169 + 33,025 + 16,384}      , 21 (15) ,20 , 20, 296451       ,  6.441 ,   7
{1,050,625 + 131,585 + 65,536}   , 23 (16), 22 , 22, 1182723      ,  34.34 ,   8
{4,198,401 + 525,313 + 262,144}  , 38 (18), 23 , 23, 4720642      ,  132.27,   8

\end{filecontents*}
\pgfplotstabletypeset[
        col sep=comma,
        string type,
        header=false,
        columns={0,1,2,3,6},
        columns/0/.style={column name={DoF $\bigl(|V_h|+|V_{2,h}|+|\Lambda_h|\bigr)$}, column type={w{l}{4.6cm}||}},
        columns/1/.style={column name={$\beta_2=10$}, column type={w{c}{1cm}}},
        columns/2/.style={column name={$\beta_2=10^3$}, column type={w{c}{1cm}}},
        columns/3/.style={column name={$\beta_2=10^7$}, column type={w{c}{1cm}}},
        columns/6/.style={column name={Inner}, column type={w{c}{1cm}}},
        every head row/.style={
            before row=\toprule
            \multicolumn{5}{c}{\textbf{FGMRES iteration counts with $\mathcal{\widetilde P}_{\gamma_1,\gamma_2}$}} \\ \midrule,
            after row=\midrule
          },
        every last row/.style={after row=\bottomrule}
      ]{data_modified_iterations_square_dg.csv}
    }
    \caption{Immersed square.}
    \label{tab:dg_square}
  \end{subtable}\hfill
  \begin{subtable}{0.48\textwidth}
    \centering
    \resizebox{\textwidth}{!}{%
      \begin{filecontents*}{data_modified_iterations_ball_dg.csv}
#DoF                            #10      #1e3  #1e7 #N    #time      #avg inner
289 + 13 + 5                    , 6  (5) ,  5  ,   5   , 339   , 0.014,    1
1089 + 45 + 20                  , 6  (6) ,  6  ,   6   , 1251  , 0.024,    1
{4,225 + 169 + 80}              , 15 (13) ,  15 ,   15  , 4803  , 0.074,    8
{16,641 + 657 + 320}            , 17 (15) ,  19 ,   19 , 18819 ,  0.349,    8
{66,049 + 2,593 + 1,280}        , 19 (14) ,  19 ,   19 , 74499 ,  1.892,    9
{263,169 + 10,305 + 5,120}      , 20 (15) ,  21 ,   21,  296451 , 8.232,    10
{1,050,625 + 41,089 + 20,480}   , 23 (17) ,  23 ,   23, 1182723 , 44.01,    10
{4,198,401 + 164,097 + 81,920}  , 26 (18) ,  26 ,   25, 4720642 , 199.4,    11

\end{filecontents*}
\pgfplotstabletypeset[
        col sep=comma,
        string type,
        header=false,
        columns={0,1,2,3,6},
        columns/0/.style={column name={DoF $\bigl(|V_h|+|V_{2,h}|+|\Lambda_h|\bigr)$}, column type={w{l}{4.5cm}||}},
        columns/1/.style={column name={$\beta_2=10$}, column type={w{c}{1cm}}},
        columns/2/.style={column name={$\beta_2=10^3$}, column type={w{c}{1cm}}},
        columns/3/.style={column name={$\beta_2=10^7$}, column type={w{c}{1cm}}},
        columns/6/.style={column name={Inner}, column type={w{c}{1cm}}},
        every head row/.style={
            before row=\toprule
            \multicolumn{5}{c}{\textbf{FGMRES iteration counts with $\mathcal{\widetilde P}_{\gamma_1,\gamma_2}$}} \\ \midrule,
            after row=\midrule
          },
        every last row/.style={after row=\bottomrule}
      ]{data_modified_iterations_ball_dg.csv}
    }
    \caption{Immersed ball.}
    \label{tab:dg_ball}
  \end{subtable}
  \caption{FGMRES iteration counts for the modified preconditioner $\mathcal{\widetilde P}_{\gamma_1,\gamma_2}^{\text{diag}}$ applied to different geometries using the $\mathcal{Q}^1 - (\mathcal{Q}^1 + \mathfrak{B}) - \mathcal{Q}^0$ element. Values between parentheses indicate the iteration counts for $\gamma_2=10^{-3}$.}
  \label{tab:iteration_counts_dg_space}
\end{table}

\FloatBarrier
\subsection{Three-dimensional test: linear elasticity}
We end this section by testing the inexact MAL preconditioner on a three-dimensional example. To the best of our knowledge, preconditioners proposed for the considered formulation (e.g.,~\cite{alshehri2025multigridpreconditioningfddlmmethod,BOFFI2024406,WangDLMFD_prec}) have thus far been applied exclusively to two-dimensional problems involving the Laplace operator. Instead of solving again the scalar model problem defined in~\eqref{eqn:model_problem} in a higher dimension, we consider a three-dimensional linear elasticity example to test the strength of our approach in a more challenging setting. In our model, each subdomain is modeled as an isotropic and homogeneous material, allowing its mechanical behavior to be fully characterized by the corresponding Lamé constants~\cite{Gurtin_Fried_Anand_2010}. The interested reader is referred to~\cite{ALZETTA2020106334,belponer2025mixeddimensionalmodelingvasculartissues} for the study of fictitious domain formulations applied to fiber-reinforced and multiscale materials.

Let $\mu,\mu_2$ be the shear moduli in $\Omega$ and $\Omega_2$, respectively, and $\lambda, \lambda_2$ the first Lamé parameters. Consistently with the assumptions presented
in Section~\ref{sec:method}, we assume $\mu_2 > \mu>0$ and $\lambda_2 > \lambda>0$. We denote with $\delta_\mu = \mu_2 - \mu$ and $\delta_\lambda = \lambda_2 - \lambda$ the corresponding jumps. Vector-valued functions are indicated in boldface; thus, no ambiguity arises when $\lambda$ denotes either a multiplier or a Lamé parameter. Adapting the arguments from the previous sections
to the vector-valued case, the fictitious domain formulation for the elasticity problem reads as follows.

\begin{problem}[Elasticity problem with FD-DLM]\label{prob:elasticity}
Given $\boldsymbol{f} \in \bigl[L^2(\Omega)\bigr]^3$, find $(\boldsymbol{u},\boldsymbol{u}_2,\boldsymbol{\lambda})\in V\times V_2\times\Lambda$ such that
\begin{align*}
  2 \mu(\varepsilon(\boldsymbol{u}),\varepsilon(\boldsymbol{v}))_\Omega + \lambda(\operatorname{div} \boldsymbol{u}, \operatorname{div} \boldsymbol{v})_\Omega + c(\boldsymbol{\lambda},\boldsymbol{v}|_{\Omega_2})                        & =(\boldsymbol{f},\boldsymbol{v})_\Omega &  & \forall \boldsymbol{v} \in V, \nonumber       \\
  2 \delta_\mu (\varepsilon(\boldsymbol{u}_2),\varepsilon(\boldsymbol{v}_2))_{\Omega_2} + \delta_\lambda(\operatorname{div} \boldsymbol{u}_2, \operatorname{div} \boldsymbol{v}_2)_{\Omega_2} -c(\boldsymbol{\lambda}, \boldsymbol{v}_{2}) & = 0                                     &  & \forall \boldsymbol{v}_{2}\in V_{2},          \\
  c(\boldsymbol{\mu},\boldsymbol{u}|_{\Omega_2} - \boldsymbol{u_{2}})                                                                                                                                                                      & = 0                                     &  & \forall \boldsymbol{\mu}\in\Lambda, \nonumber
\end{align*}
where $\varepsilon(\boldsymbol{u}) = \frac{1}{2}(\nabla \boldsymbol{u} + \nabla \boldsymbol{u}^{\mathsf{T}})$ is the linearized strain tensor, and $V = [H_0^1(\Omega)]^3, V_2=[H^1(\Omega_2)]^3, \Lambda=[H^{-1}(\Omega_2)]^3$ are the vector-valued analogues of the spaces defined for the scalar problem.
\end{problem}It is straightforward to see that the algebraic formulation of Problem~\eqref{prob:elasticity} is formally identical to the scalar one in~\eqref{eqn:matrix}, except that the matrices $\mathsf{A}$ and $\mathsf{A_2}$
correspond to bilinear forms associated with standard linear elasticity problems. Apart from the strain tensor term, the main difference with respect to the Laplace case is the presence of the additional "div-div" terms, which
resembles the so-called "grad-div" stabilization used in standard AL preconditioners~\cite{Reusken,graddiv}. However, it is crucial to observe that, in contrast to such approaches, this term naturally arises from the underlying physics rather than being artificially introduced.
Hence, the setting is identical to the scalar case\footnote{Indeed, the additional term $(\operatorname{div} \boldsymbol{u}_2, \operatorname{div} \boldsymbol{v}_2)_{\Omega_2}$ does not remove the singularity from $\mathsf{A_2}$.} discussed in Section~\ref{sec:AL_prec}, and the preconditioner can be applied without any modification.
In spite of the presence of the "div-div" terms, augmenting only the second equation is insufficient and has a detrimental effect on the number of outer iterations, so both equations must be augmented.

The background domain is the cube $\Omega=[-1.25,1.25]^3$, and the immersed domain $\Omega_2$ is chosen as the rectangular box $[-a,a] \times [-b,b] \times [-c,c]$, with $(a,b,c)=(0.65, 0.3, 0.4)$. The initial configuration, displaying the immersed domain inside the background one, is shown in the left part of Figure~\ref{fig:3D_solution}. The displacement field $\boldsymbol{u}\colon \Omega \rightarrow \mathbb{R}^3$ is set to zero on $\partial \Omega$, while $\boldsymbol{f} = [2,1,1]^\mathsf{T}$ represents the body force.
We test the inexact modified preconditioner $\mathcal{\widetilde P}_{\gamma_1,\gamma_2}^{\text{diag}}$, fixing the augmentation parameters to $\gamma_1\!=\!10$ and $\gamma_2\!=\!10^{-2}$ as in previous two-dimensional tests, while iterations are stopped when the relative residual norm is reduced below $10^{-6}$. In our experiments, we vary
the contrast between the Lamé parameters in the subdomains $\Omega$ and $\Omega_2$, while monitoring the number of iterations under mesh refinement. We start with parameters $(\mu,\mu_2) = (1,10)$ and $(\lambda,\lambda_2)=(2,20)$, corresponding to a tenfold ratio between the Lamé
parameters. We then increase these ratios to $20$ and $100$, by considering $(\mu,\mu_2) = (1,20)$ and $(\lambda,\lambda_2)=(2,40)$, $(\mu,\mu_2) = (1,100)$ and
$(\lambda,\lambda_2)=(2,200)$, respectively. For each set of parameters, the materials exhibit %
a huge contrast in their Young's modulus $E=2\mu(1+\nu)$, which measures the material's stiffness under tensile or compressive force. This implies that the
immersed body becomes increasingly stiffer than the surrounding medium. Hence, the higher the ratio, the
less the immersed body deforms under the same applied force.

As shown in Table~\ref{tab:modified_3D}, the inexact MAL preconditioner delivers excellent outer iteration counts, largely independent of mesh refinement and ratios between the Lamé constants, while keeping moderate inner iteration counts. This indicates that the mesh-independent behavior observed in the scalar two-dimensional tests persists for this three-dimensional vector-valued example. Note that the explicit favorable dependence on $h_{\Omega_2}$ in the three-dimensional lower bound of Remark~\ref{remark:3d_lower_bound} holds for the ideal AL preconditioner $\mathcal{P}_\gamma$ and does not directly extend to the modified variant. Additionally, these numerical tests do not explore the nearly incompressible limit $\nu\to 0.5$. The reported elasticity experiment is intended to demonstrate the applicability of the preconditioner to a three-dimensional vector-valued interface problem with fixed Lamé parameters. Robustness with respect to $\nu\to 0.5$ is therefore an interesting direction for future investigation.

A detailed visualization of the numerical solution $\boldsymbol{u}_h$, along with the
reference and deformed configuration for the immersed body when $(\mu,\mu_2) = (1,20)$ and $(\lambda,\lambda_2)=(2,40)$, is displayed in Figure~\ref{fig:3D_solution}.

\begin{table}[!ht]
  \centering
  \begin{subtable}{0.85\textwidth}
    \centering
    \footnotesize
    \begin{filecontents*}{data_modified_iterations_elasticity_diagonal.csv}
#DoF                     #moderate      #lessmoderate  #stiff        #N      #time   #avg inner
375  +  24 + 24                , 25 ,            26     ,   26          , 141     ,      0. ,  1
{2187  +  81 + 81}          , 19 ,            17     ,   17          , 783     ,      0. ,  11
{14,739  + 375 + 375  }      , 17 ,            16     ,   14          , 5181    ,      0. ,  11
{107,811  + 2,187 + 2,187  }      , 15 ,            14     ,   13          , 37395   ,      0. ,  11
{823,875  + 14,739 + 14,739 }     , 21 ,            17     ,   14          , 284451  ,      0. ,  12
\end{filecontents*}
\pgfplotstabletypeset[
    col sep=comma,
    string type,
    header=false,
    columns={0,1,2,3,6},
    columns/0/.style={column name={DoF $\bigl(|V_h|+|V_{2,h}|+|\Lambda_h|\bigr)$}, column type=l||},
    columns/1/.style={column name={$\mu_2/\mu\!=\!\lambda_2/\lambda\!=\!10$}, column type=c},
    columns/2/.style={column name={$\mu_2/\mu\!=\!\lambda_2/\lambda\!=\!20$}, column type=c},
    columns/3/.style={column name={$\mu_2/\mu\!=\!\lambda_2/\lambda\!=\!100$}, column type=c},
    columns/6/.style={column name=Inner, column type=c},
    every head row/.style={
    before row=\toprule
    \multicolumn{5}{c}{\textbf{FGMRES iteration counts with $\mathcal{\widetilde P}_{\gamma_1,\gamma_2}^{\text{diag}}$ }} \\ \midrule,
    after row=\midrule
    },
    every last row/.style={after row=\bottomrule}
    ]{data_modified_iterations_elasticity_diagonal.csv}
    \label{tab:elasticity_diag}
  \end{subtable}
  \caption{FGMRES iteration counts for the modified preconditioner $\mathcal{\widetilde P}_{\gamma_1,\gamma_2}^{\text{diag}}$ applied to the three-dimensional linear elasticity example for different ratios of Lamé parameters between subdomains. Last column reports the average number of inner preconditioned CG iterations needed to approximately invert the $(1,1)$-block when the ratio between Lamé constants is $20$.}
  \label{tab:modified_3D}
\end{table}

\begin{figure}[h!]
  \centering
  \begin{minipage}{0.3\textwidth}
    \includegraphics[width=0.9\linewidth]{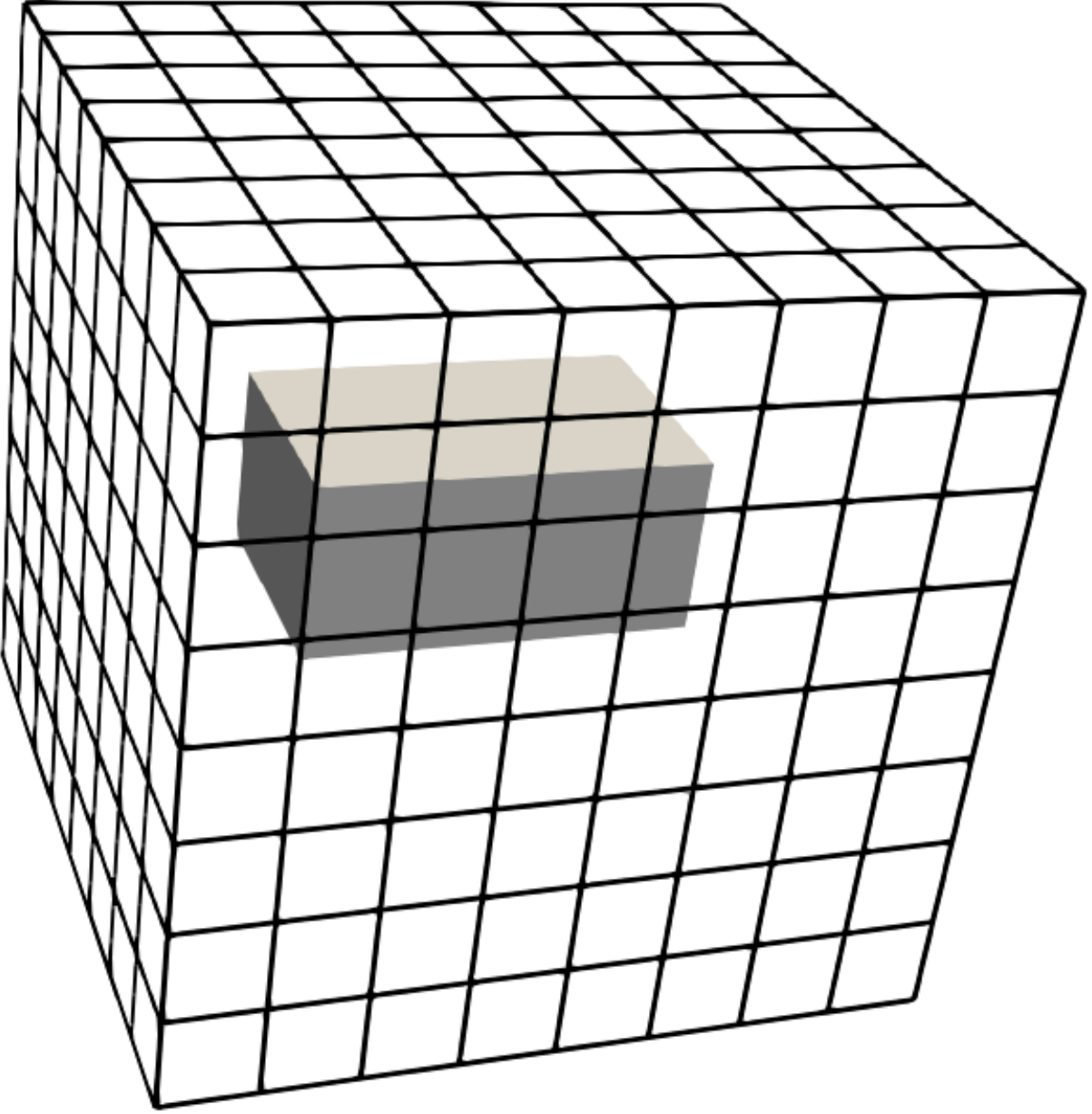}
  \end{minipage}\hspace{0.4cm}
  \begin{minipage}{0.3\textwidth}
    \centering
    \includegraphics[width=1\linewidth]{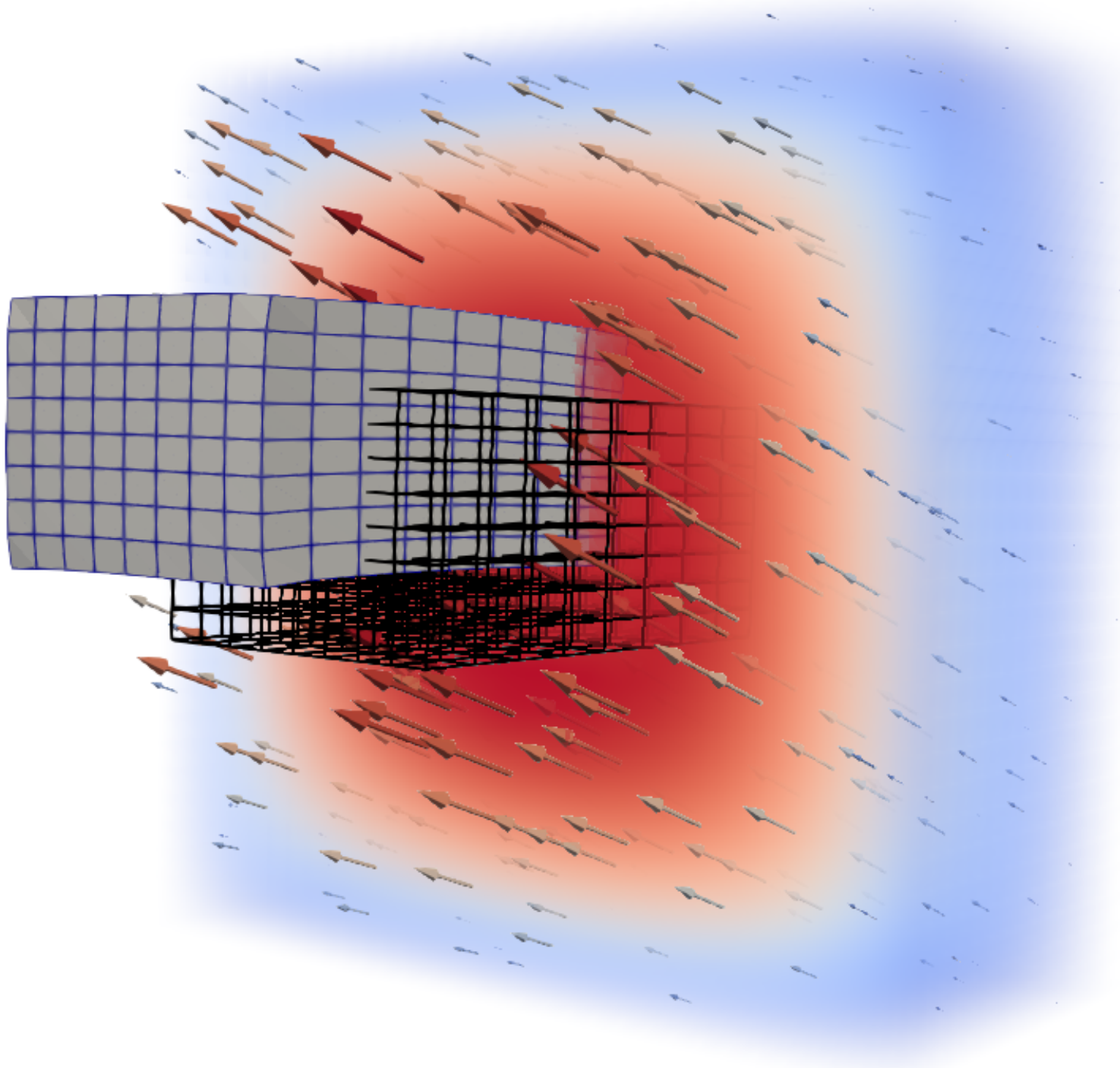}
  \end{minipage}\hspace{0.01cm}
  \begin{minipage}{0.15\textwidth}
    \centering
    \includegraphics{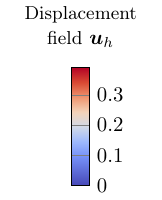}
  \end{minipage}

  \caption{Left: background domain (black lines) containing the immersed domain $\Omega_2$. Right: detail of the numerical solution
    obtained with $\mu_2/\mu = \lambda_2/\lambda = 20$. Arrows indicate the computed displacement field $\boldsymbol{u}_h$, while its magnitude is shown in the background domain using a color map with opacity. The black wireframe depicts the undeformed (reference) configuration of the immersed body, whereas the gray surface is the deformed one.}
  \label{fig:3D_solution}
\end{figure}

\FloatBarrier
\section{Conclusions}\label{sec:conclusions}
We have presented a novel augmented Lagrangian-based preconditioner to accelerate
the convergence of iterative solvers for saddle point systems arising from the finite element discretization
of elliptic interface problems using a fictitious domain approach. We have derived
ideal and modified variants of the preconditioner. The modified version, owing to its block-triangular structure, is particularly suitable for practical applications as it enables the use of off-the-shelf solvers from established high-performance libraries. A spectral analysis has been performed for both variants. We have shown, through
extensive numerical experiments with different configurations, that the proposed preconditioners are robust with respect to mesh sizes and jumps in
the coefficients. Ongoing work aims to extend the preconditioning strategy presented in this paper to fluid-structure interaction problems
and to develop tailored solvers for the efficient inversion of the augmented $(1,1)$-block.

\section*{Acknowledgments}
\noindent MB acknowledges partial support from grant MUR PRIN 2022 No. 20227PCCKZ (``Low Rank Structures and Numerical Methods in Matrix and Tensor Computations and their Applications``). LH and MF acknowledge
partial support from grant MUR PRIN 2022 No. 2022WKWZA8 (“Immersed methods for multiscale and multiphysics problems (IMMEDIATE)”), and the support of the European Research Council (ERC) under the European Union's Horizon 2020 research and innovation programme (call HORIZON-EUROHPC-JU-2023-COE-03, grant agreement No. 101172493 ``dealii-X''). LH acknowledges partial support by King Abdullah University of Science and Technology Research Funding (KRF) under Award No. ORFS-2025-CRG13-6911.3.
The authors are members of Gruppo Nazionale per il Calcolo Scientifico (GNCS) of Istituto Nazionale di Alta Matematica (INdAM).

\appendix

\section{}\label{sec:appendix_proofs}
In this appendix we give the proofs of the lemmas in Section~\ref{sec:spectral_modified}.

\subsection*{Proof of Lemma~\ref{lemma:prec_mat}}
Let
\[
  {\mathsf{A}}_{\gamma_1,\gamma_2} \coloneqq
  \begin{bmatrix}
    \mathsf{A} + \gamma_1 \mathsf{C^T W^{-1} C} & -\gamma_1 \mathsf{C^T W^{-1} C_2}                          \\[0.3em]
    \mathsf{-\gamma_2 C_2^T W^{-1}C}                     & \mathsf{A_2} + \gamma_2 \mathsf{C_2^T W^{-1} C_2}
  \end{bmatrix}
\]
and
\[
  \widetilde{\mathsf{A}}_{\gamma_1,\gamma_2} \coloneqq
  \begin{bmatrix}
    \mathsf{A} + \gamma_1 \mathsf{C^T W^{-1} C} & -\gamma_1 \mathsf{C^T W^{-1} C_2}                          \\[0.3em]
    0                                                    & \mathsf{A_2} + \gamma_2 \mathsf{C_2^T W^{-1} C_2}
  \end{bmatrix}.
\]
Using the factorization in~\eqref{eqn:fact}, we have

\begin{align*}
  \mathcal{A}_{\gamma_1, \gamma_2}\widetilde{\mathcal{P}}_{\gamma_1,\gamma_2}^{-1}
   & =
  \begin{bmatrix}
    \mathsf{A}_{\gamma_1,\gamma_2} & \mathsf{B^{T}} \\
    \mathsf{B}                     & 0
  \end{bmatrix}
  \begin{bmatrix}
    \mathsf{\widetilde{A}_{\gamma_1,\gamma_2}^{-1}} & 0               \\
    0                                               & \mathsf{I}_\ell
  \end{bmatrix}
  \begin{bmatrix}
    \mathsf{I}_{n+m} & \mathsf{B^{T}}   \\
    0                & -\mathsf{I}_\ell
  \end{bmatrix}
  \begin{bmatrix}
    \mathsf{I}_{n+m} & 0                          \\
    0                & -\mathsf{\widehat{S}^{-1}}
  \end{bmatrix} \label{eq:prec_mat} \\[6pt]
   & =
  \begin{bmatrix}
    \mathsf{A_{\gamma_1,\gamma_2}\widetilde{A}_{\gamma_1,\gamma_2}^{-1}}
     & -(\mathsf{A_{\gamma_1,\gamma_2}\widetilde{A}_{\gamma_1,\gamma_2}^{-1}} - \mathsf{I}_{n+m})\mathsf{B^{T}\widehat{S}^{-1}} \\
    \mathsf{B\widetilde{A}_{\gamma_1,\gamma_2}^{-1}}
     & -\mathsf{B\widetilde{A}_{\gamma_1,\gamma_2}^{-1}B^{T}\widehat{S}^{-1}}
  \end{bmatrix} \notag
\end{align*}

First, we observe that the inverse of $\mathsf{\widetilde{A}_{\gamma_1,\gamma_2}}$ is

\begin{equation*}
  \label{A.1}
  \mathsf{\widetilde{A}_{\gamma_1,\gamma_2}^{-1}} =
  \begin{bmatrix}
    \mathsf{A_{11}^{-1}} & -\mathsf{A_{11}^{-1}A_{12}A_{22}^{-1}} \\
    0                    & \mathsf{A_{22}^{-1}}
  \end{bmatrix}.
\end{equation*}

The $(1,1)$ block of $\mathcal{A}_{\gamma_1,\gamma_2}\widetilde{\mathcal{P}}_{\gamma_1, \gamma_2}^{-1}$ is

\[
  \mathsf{A_{\gamma_1,\gamma_2}\widetilde{A}_{\gamma_1,\gamma_2}^{-1}} =
  \begin{bmatrix}
    \mathsf{A_{11}} & \mathsf{A_{12}} \\
    \mathsf{A_{21}} & \mathsf{A_{22}}
  \end{bmatrix}
  \begin{bmatrix}
    \mathsf{A_{11}^{-1}} & -\mathsf{A_{11}^{-1}A_{12}A_{22}^{-1}} \\
    0                    & \mathsf{A_{22}^{-1}}
  \end{bmatrix}
  =
  \begin{bmatrix}
    \mathsf{I}_{n}             & 0                                                          \\
    \mathsf{A_{21}A_{11}^{-1}} & \mathsf{I}_m - \mathsf{A_{21}A_{11}^{-1}A_{12}A_{22}^{-1}}
  \end{bmatrix}.
\]

\vspace{0.2cm}
The $(1,2)$, $(2,1)$, and $(2,2)$ blocks are, respectively,

\begin{equation}\label{eq:block12}
  \begin{aligned}
    -(\mathsf{A_{\gamma_1,\gamma_2}\widetilde{A}_{\gamma_1,\gamma_2}^{-1}} - \mathsf{I}_{n+m}) \mathsf{B^{T}\widehat{S}^{-1}}
     & =-
    \begin{bmatrix}
      0                          & 0                                            \\
      \mathsf{A_{21}A_{11}^{-1}} & -\mathsf{A_{21}A_{11}^{-1}A_{12}A_{22}^{-1}}
    \end{bmatrix}
    \begin{bmatrix}
      \mathsf{C^{T}} \\ -\mathsf{C_{2}^{T}}
    \end{bmatrix}
    \mathsf{\widehat{S}^{-1}} \\[6pt]
     & =
    \begin{bmatrix}
      0 \\
      \mathsf{\bigl(-A_{21}A_{11}^{-1}C^{T} - A_{21}A_{11}^{-1}A_{12}A_{22}^{-1}C_{2}^{T}\bigr)\widehat{S}^{-1}}
    \end{bmatrix},
  \end{aligned}
\end{equation}

\vspace{0.6cm}

\begin{equation}\label{eq:block21}
  \begin{aligned}
    \mathsf{B \widetilde{A}_{\gamma_1,\gamma_2}^{-1}}
     & =
    \begin{bmatrix}
      \mathsf{C} & -\mathsf{C_2}
    \end{bmatrix}
    \begin{bmatrix}
      \mathsf{A_{11}^{-1}} & -\mathsf{A_{11}^{-1}A_{12}A_{22}^{-1}} \\
      0                    & \mathsf{A_{22}^{-1}}
    \end{bmatrix} \\[6pt]
     & =
    \begin{bmatrix}
      \mathsf{C A_{11}^{-1}} & -\mathsf{C A_{11}^{-1}A_{12}A_{22}^{-1}} - \mathsf{C_2 A_{22}^{-1}}
    \end{bmatrix},
  \end{aligned}
\end{equation}

\vspace{0.6cm}

\begin{equation}\label{eq:block22}
  \begin{aligned}
    -\mathsf{B \widetilde{A}_{\gamma_1,\gamma_2}^{-1} B^{T}\widehat{S}^{-1}}
     & =
    -
    \begin{bmatrix}
      \mathsf{C} & -\mathsf{C_2}
    \end{bmatrix}
    \begin{bmatrix}
      \mathsf{A_{11}^{-1}} & -\mathsf{A_{11}^{-1}A_{12}A_{22}^{-1}} \\
      0                    & \mathsf{A_{22}^{-1}}
    \end{bmatrix}
    \begin{bmatrix}
      \mathsf{C^{T}} \\
      -\mathsf{C_2^{T}}
    \end{bmatrix}
    \mathsf{\widehat{S}^{-1}} \\[6pt]
     & =
    -\Bigl(
    \mathsf{C A_{11}^{-1}C^{T}}
    + \mathsf{C A_{11}^{-1}A_{12}A_{22}^{-1}C_2^{T}}
    + \mathsf{C_2 A_{22}^{-1}C_2^{T}}
    \Bigr)\mathsf{\widehat{S}^{-1}}.
  \end{aligned}
\end{equation}

Plugging the above expressions into the expression for $\mathcal{A}_{\gamma_1,\gamma_2}\widetilde{\mathcal{P}}_{\gamma_1, \gamma_2}^{-1}$ gives~\eqref{eqn:apinv}. \qed
\vspace{0.4cm}
\subsection*{Proof of Lemma~\ref{lemma:SM}}
Applying the Sherman-Morrison-Woodbury matrix identity
\[
  (\mathsf{Y + U Z V})^{-1} = \mathsf{Y^{-1} - Y^{-1} U (Z^{-1} + V Y^{-1} U)^{-1} V Y^{-1}},
\]
to $\mathsf{A_{11}^{-1} = (A + \gamma_1 C^{T} W^{-1} C)^{-1}}$ gives
\[
  \mathsf{A_{11}^{-1}} = \mathsf{A^{-1} - A^{-1} C^{T} (\gamma_1^{-1} W + C A^{-1} C^{T})^{-1} C A^{-1}}.
\]
\noindent
Thus we have:

\begin{align*}
  \mathsf{C A_{11}^{-1} C^{T}}
   & = \mathsf{C}\Bigl(\mathsf{A^{-1}}
  - \mathsf{A^{-1} C^{T}}
    (\gamma_1^{-1}\mathsf{W} + \mathsf{C A^{-1}C^{T}})^{-1}
  \mathsf{C A^{-1}}\Bigr)\mathsf{C^{T}} \nonumber                                                               \\
   & = \mathsf{C A^{-1} C^{T}}
  \Bigl(\mathsf{I}_\ell - (\gamma_1^{-1}\mathsf{W} + \mathsf{C A^{-1}C^{T}})^{-1}
  \mathsf{C A^{-1} C^{T}}\Bigr) \nonumber                                                                       \\
   & = \mathsf{C A^{-1} C^{T}}
  \Bigl(\mathsf{I}_\ell - (\gamma_1^{-1}\mathsf{W} + \mathsf{C A^{-1}C^{T}})^{-1}
  (\mathsf{C A^{-1} C^{T}} + \gamma_1^{-1}\mathsf{W} - \gamma_1^{-1}\mathsf{W})\Bigr) \nonumber                 \\
   & = \mathsf{C A^{-1} C^{T}}
  \Bigl(\mathsf{I}_\ell-\mathsf{I}_\ell + (\gamma_1^{-1}\mathsf{W} + \mathsf{C A^{-1}C^{T}})^{-1}
  \gamma_1^{-1}\mathsf{W}\Bigr) \nonumber                                                                       \\
   & = \mathsf{C A^{-1} C^{T}}
  (\gamma_1^{-1}\mathsf{W} + \mathsf{C A^{-1}C^{T}})^{-1}\gamma_1^{-1}\mathsf{W} \nonumber                      \\
   & = (\mathsf{C A^{-1} C^{T}}+\gamma_1^{-1}\mathsf{W}-\gamma_1^{-1}\mathsf{W})
  (\gamma_1^{-1}\mathsf{W} + \mathsf{C A^{-1}C^{T}})^{-1}\gamma_1^{-1}\mathsf{W} \nonumber                      \\
   & = (\mathsf{I}_\ell - \gamma_1^{-1}\mathsf{W}
  (\gamma_1^{-1}\mathsf{W} + \mathsf{C A^{-1} C^{T}})^{-1})\gamma_1^{-1}\mathsf{W} \nonumber                    \\
   & = \gamma_1^{-1}\mathsf{W} - \gamma_1^{-1}\mathsf{W}(\gamma_1^{-1}\mathsf{W} + \mathsf{C A^{-1}C^{T}})^{-1}
  \gamma_1^{-1}\mathsf{W}. \nonumber                                                                            \\[1ex]
  \intertext{Multiplying by $\gamma_1 \mathsf{W^{-1}}$ gives}                                                   \\[0.5ex]
  \gamma_1 \mathsf{C A_{11}^{-1} C^{T} W^{-1}}
   & = \mathsf{I}_\ell - \gamma_1^{-1}\mathsf{W}
  (\gamma_1^{-1}\mathsf{W} + \mathsf{C A^{-1} C^{T}})^{-1} \nonumber                                            \\
   & = \mathsf{I}_\ell - \Bigl(
  (\gamma_1^{-1}\mathsf{W} + \mathsf{C A^{-1} C^{T}})
  (\gamma_1^{-1}\mathsf{W})^{-1}
  \Bigr)^{-1} \nonumber                                                                                         \\
   & = \mathsf{I}_\ell - \Bigl(\mathsf{I}_\ell +
  \gamma_1 \mathsf{C A^{-1} C^{T} W^{-1}}\Bigr)^{-1}.
\end{align*}

We complete the proof by observing that all necessary inverses exist.
In our system, $\mathsf{A}$ is symmetric positive definite. Therefore, since $\mathsf{C}$ has full row-rank, $\mathsf{C A^{-1} C^{T}}$ is also SPD, and $
  \mathsf{I}_\ell + \gamma_1 \mathsf{C A^{-1} C^{T} W^{-1}} $
is necessarily invertible. Note that in the Oseen problem analyzed in~\cite{spectral_analysis_modified}, $\mathsf{A_{2}}$ is invertible, therefore the same identity~\eqref{eqn:identity} is valid also for $\mathsf{A_{22}^{-1}}$. In our case, on the other hand, $\mathsf{A_{2}}$ is singular (cf. Remark~\ref{rmk:singularity}).

\vspace{0.1cm}
\subsection*{Proof of Lemma~\ref{lemma:dfge}} From Lemma~\ref{lemma:prec_mat} we know that
\[
  \mathcal{A}_{\gamma_1,\gamma_2} \mathcal{\widetilde P}_{\gamma_1,\gamma_2}^{-1} =
  \begin{bmatrix}
    \mathsf{I}_{n}                                             & 0 & 0                  \\
    \mathsf{A_{21}A_{11}^{-1}}                                 &
    \mathsf{I}_m - \mathsf{A_{21}A_{11}^{-1}A_{12}A_{22}^{-1}} &
    \bigl(-\mathsf{A_{21}A_{11}^{-1}C^{T}}
    - \mathsf{A_{21}A_{11}^{-1}A_{12}A_{22}^{-1}C_2^{T}}\bigr)\mathsf{\widehat{S}^{-1}} \\
    \mathsf{C A_{11}^{-1}}                                     &
    -\mathsf{C A_{11}^{-1}A_{12}A_{22}^{-1} - C_2 A_{22}^{-1}} &
    -\bigl(\mathsf{C A_{11}^{-1} C^{T}}
    + \mathsf{C A_{11}^{-1} A_{12} A_{22}^{-1} C_2^{T}}
    + \mathsf{C_2 A_{22}^{-1} C_2^{T}}\bigr)\mathsf{\widehat{S}^{-1}}
  \end{bmatrix}.
\]

\begin{itemize}
  \vspace{0.15cm}
  \item First, we consider the $(2,2)$-block.
        Using the definitions of $\mathsf{A_{21}}$ and $\mathsf{A_{12}}$, together with~\eqref{eqn:identity}, we obtain
        \begin{align*}
          \mathsf{I}_m - \mathsf{A_{21} A_{11}^{-1} A_{12} A_{22}^{-1}}
           & = \mathsf{I}_m - \mathsf{\bigl(-\gamma_{2} C_2^{T}W^{-1}C\bigr)A_{11}^{-1}\bigl(-\gamma_{1}C^{T}W^{-1}C_2\bigr)A_{22}^{-1}}      \\
           & = \mathsf{I}_m - \mathsf{\gamma_{2} C_2^{T}W^{-1}
          \bigl(\gamma_{1}C A_{11}^{-1}  C^{T} W^{-1}\bigr) C_2 A_{22}^{-1}}                                                                  \\
           & = \mathsf{I}_m - \mathsf{\gamma_{2} C_2^{T}W^{-1}
          \Bigl(}\mathsf{I}_\ell - \bigl(\mathsf{I}_\ell + \gamma_{1} \mathsf{C A^{-1} C^{T} W^{-1}}\bigr)^{-1}\Bigr)\mathsf{C_2 A_{22}^{-1}} \\
           & = \mathsf{I}_m - \mathsf{D E}
        \end{align*}
        with
        \[
          \mathsf{D = \gamma_{2} C_2^{T}W^{-1}\Bigl(}\mathsf{I}_\ell - (\mathsf{I}_\ell + \gamma_{1} \mathsf{C A^{-1} C^{T} W^{-1}})^{-1}\Bigr),
          \qquad
          \mathsf{E = C_2 A_{22}^{-1}}.
        \]

        \vspace{0.15cm}
  \item For the (2,3)-block, we have
        \begin{align*}
          \bigl(-\mathsf{A_{21} A_{11}^{-1} C^{T}} - \mathsf{A_{21} A_{11}^{-1} A_{12} A_{22}^{-1} C_2^{T}}\bigr)\mathsf{\widehat{S}^{-1}}
           & = \Bigl(\mathsf{\gamma_{2} C_2^{T} W^{-1} C A_{11}^{-1} C^{T}
          - \gamma_{2} C_2^{T}W^{-1} C A_{11}^{-1}\gamma_{1}C^{T}W^{-1} C_2 A_{22}^{-1} C_2^{T}}\Bigr)\mathsf{\widehat{S}^{-1}} \notag \\
           & = \mathsf{\gamma_{2} C_2^{T}W^{-1}C A_{11}^{-1}C^{T}}
          \Bigl(\mathsf{I}_\ell -\mathsf{ \gamma_{1} W^{-1} C_2 A_{22}^{-1} C_2^{T}\Bigr)}\mathsf{\widehat{S}^{-1}} \notag             \\
           & = \mathsf{\gamma_{2} C_2^{T}W^{-1}C A_{11}^{-1} C^{T} (\gamma_{1}W^{-1})
          \Bigl(\tfrac{1}{\gamma_{1}} W - C_2 A_{22}^{-1} C_2^{T}\Bigr)}\mathsf{\widehat{S}^{-1}} \notag                               \\
           & = \gamma_{2}\mathsf{C_2^{T} W^{-1} C A_{11}^{-1} C^{T} (\gamma_{1} W^{-1})}
          \Bigl(\mathsf{I}_\ell - \gamma_{1} \mathsf{C_2 A_{22}^{-1} C_2^{T} W^{-1}}\Bigr)
          \Bigl(\tfrac{1}{\gamma_{1}}\mathsf{W}\Bigr)\mathsf{\widehat{S}^{-1}} \notag                                                  \\
           & = \gamma_{2}\mathsf{C_2^{T} W^{-1}}
          \Bigl(\gamma_{1}\mathsf{C A_{11}^{-1} C^{T} W^{-1}}\Bigr)
          \Bigl(\mathsf{I}_\ell - \gamma_{1} \mathsf{C_2 A_{22}^{-1} C_2^{T} W^{-1}}\Bigr)
          \Bigl(\tfrac{1}{\gamma_{1}}\mathsf{W}\Bigr)\mathsf{\widehat{S}^{-1}} \notag                                                  \\
           & = \gamma_{2}\mathsf{C_2^{T} W^{-1}}
          \Bigl(\mathsf{I}_\ell-\bigl(\mathsf{I}_\ell+\gamma_{1}\mathsf{C A^{-1} C^{T} W^{-1}}\bigr)^{-1}\Bigr)
          \Bigl(\mathsf{I}_\ell - \gamma_{1} \mathsf{C_2 A_{22}^{-1} C_2^{T} W^{-1}}\Bigr)
          \Bigl(\tfrac{1}{\gamma_{1}}\mathsf{W}\Bigr)\mathsf{\widehat{S}^{-1}} \notag                                                  \\
           & = \mathsf{D G \Bigl(\tfrac{1}{\gamma_{1}}W\Bigr)\widehat{S}^{-1}}, \label{eq:final}
        \end{align*}
        where
        \[
          \mathsf{G = }\mathsf{I}_\ell -  \gamma_{1} \mathsf{C_2 A_{22}^{-1} C_2^{T} W^{-1}}.
        \]

        \vspace{0.15cm}
  \item Next, for the (3,2)-block we have
        \begin{align*}
          -\mathsf{C A_{11}^{-1} A_{12} A_{22}^{-1} - C_2 A_{22}^{-1}}
           & = \mathsf{C A_{11}^{-1} \gamma_{1} C^{T} W^{-1} C_2 A_{22}^{-1} - C_2 A_{22}^{-1}}                                                                     \\
           & = \Bigl(\mathsf{C A_{11}^{-1} \gamma_{1} C^{T} W^{-1}} - \mathsf{I}_\ell\Bigr)\mathsf{C_2 A_{22}^{-1}}                                                 \\
           & = \Bigl(\mathsf{I}_\ell - \Bigl(\mathsf{I}_\ell + \gamma_{1} \mathsf{C A^{-1} C^{T} W^{-1}}\Bigl)^{-1} - \mathsf{I}_\ell\Bigr)\mathsf{C_2 A_{22}^{-1}} \\
           & = -\Bigl(\mathsf{I}_\ell + \gamma_{1} \mathsf{C A^{-1} C^{T} W^{-1}}\Bigr)^{-1}\mathsf{C_2 A_{22}^{-1}}                                                \\
           & = -\mathsf{F E},
        \end{align*}
        with
        \[
          \mathsf{F = \Bigl(}\mathsf{I}_\ell + \gamma_{1} \mathsf{C A^{-1} C^{T} W^{-1}}\Bigr)^{-1}.
        \]

        \vspace{0.15cm}
  \item Finally, the (3,3)-block:
        \begin{align*}
          -\Bigl( & \mathsf{C A_{11}^{-1} C^{T} + C A_{11}^{-1} A_{12} A_{22}^{-1} C_2^{T} + C_2 A_{22}^{-1} C_2^{T}}\Bigr)\mathsf{\widehat{S}^{-1}} \nonumber               \\
                  & = -\Bigl(\mathsf{C A_{11}^{-1} C^{T} + C_2 A_{22}^{-1} C_2^{T} - C A_{11}^{-1}\gamma_{1} C^{T} W^{-1} C_2 A_{22}^{-1} C_2^{T}}\Bigr)
          (\mathsf{\gamma_{1}W^{-1}})(\mathsf{\gamma_{1}W^{-1}})^{-1}\mathsf{\widehat{S}^{-1}} \nonumber                                                                     \\
                  & = -\Bigl(\mathsf{\gamma_{1} C A_{11}^{-1} C^{T} W^{-1} + \gamma_{1} C_2 A_{22}^{-1} C_2^{T} W^{-1}
            - \gamma_{1} C A_{11}^{-1} \gamma_{1} C^{T} W^{-1} C_2 A_{22}^{-1} C_2^{T} W^{-1}}\Bigr)
          (\mathsf{\gamma_{1}W^{-1}})^{-1}\mathsf{\widehat{S}^{-1}} \nonumber                                                                                                \\
                  & = -\Bigl(\mathsf{I}_\ell - \bigl(\mathsf{I}_\ell - \gamma_{1} \mathsf{C A_{11}^{-1} C^{T} W^{-1}}\bigr)
          \bigl(\mathsf{I}_\ell - \gamma_{1} \mathsf{C_2 A_{22}^{-1} C_2^{T} W^{-1}}\bigr)\Bigr)
          (\gamma_{1}\mathsf{W^{-1}})^{-1}\mathsf{\widehat{S}^{-1}} \nonumber                                                                                                \\
                  & = -\Bigl(\mathsf{I}_\ell - \bigl(\mathsf{I}_\ell - \mathsf{I}_\ell + \bigl( \mathsf{I}_\ell + \gamma_{1} \mathsf{C A^{-1} C^{T} W^{-1}}\bigr)^{-1}\bigr)
          \bigl(\mathsf{I}_\ell - \gamma_{1} \mathsf{C_2 A_{22}^{-1} C_2^{T} W^{-1}}\bigr)\Bigr)
          (\gamma_{1}\mathsf{W^{-1}})^{-1}\mathsf{\widehat{S}^{-1}} \nonumber                                                                                                \\
                  & = -\Bigl(\mathsf{I}_\ell - \Bigl(\mathsf{I}_\ell + \gamma_{1} \mathsf{C A^{-1} C^{T} W^{-1}}\Bigr)^{-1}
          \bigl(\mathsf{I}_\ell - \gamma_{1} \mathsf{C_2 A_{22}^{-1} C_2^{T} W^{-1}}\bigr)\Bigr)
          (\gamma_{1}\mathsf{W^{-1}})^{-1}\mathsf{\widehat{S}^{-1}} \nonumber                                                                                                \\
                  & = (\mathsf{I}_\ell - \mathsf{F G})(-\gamma_{1}\mathsf{W^{-1}})^{-1}\mathsf{\widehat{S}^{-1}}.
        \end{align*}

\end{itemize}

\vspace{1cm}

\section{AMG Parameters}\label{sec:app:AMG_params}
The AMG parameter settings used in the numerical experiments are reported in Table~\ref{tab:amg_params}.
\renewcommand{\thetable}{B.\arabic{table}}
\setcounter{table}{0}
\begin{table}[h]
  \centering
  \renewcommand{\arraystretch}{1.2}
  \begin{tabular}{|l|l|}
    \hline
    \textbf{Parameter}    & \textbf{Value} \\
    \hline
    Smoother              & Chebyshev      \\
    Coarse solver         & Amesos-KLU     \\
    Smoother sweeps       & 2              \\
    V-cycle applications  & 1              \\
    Aggregation threshold & \(10^{-3}\)    \\
    Max size coarse level & 2000           \\
    \hline
  \end{tabular}
  \caption{Parameters for ML~\cite{Trilinos} (Trilinos 14.4.0).}
  \label{tab:amg_params}
\end{table}

\bibliographystyle{abbrv}
\bibliography{refs}

\end{document}